\tikzset{my loop/.style =  {to path={
  \pgfextra{}
  [looseness=20,min distance=2cm]
  \tikz@to@curve@path},font=\sffamily\small
  }}  
\tikzstyle{loop left}= [out=100, in=150, my loop]
\newtheorem{thm}{Theorem}[section]
\newtheorem{lem}[thm]{Lemma}
\newtheorem{prop}[thm]{Proposition}
\newtheorem{remark}[thm]{Remark}
\theoremstyle{definition}
\newtheorem{defn}[thm]{Definition}
\theoremstyle{definition}
\newtheorem{example}[thm]{Example}
\newcommand{\Z}{\mathbb{Z}}
\newcommand{\R}{\mathbb{R}}
\newcommand{\RR}{\mathbb{R}}
\newcommand{\cG}{\mathcal{G}}
\newcommand{\cF}{\mathcal{F}}
\newcommand{\cP}{\mathcal{P}}
\newcommand{\conv}{\textrm{conv}}
\newcommand{\rank}{\textrm{rank}}
\newcommand{\pa}{\textrm{pa}}
\newcommand{\an}{\textrm{an}}
\renewcommand{\emptyset}{\varnothing}
\DeclareMathOperator{\Span}{span}
 \newcommand\independent{\protect\mathpalette{\protect\independenT}{\perp}}
    \def\independenT#1#2{\mathrel{\rlap{$#1#2$}\mkern2mu{#1#2}}}
\newcommand\ind{\independent}
\newcommand\notindependent{\!\perp\!\!\!\!\not\perp\!}
\DeclareMathOperator{\cone}{cone}
\DeclareMathOperator{\face}{face}
\begin{document}

\title[Generalized Permutohedra from Probabilistic Graphical Models]{Generalized Permutohedra \\ from Probabilistic Graphical Models}
\author{Fatemeh Mohammadi}
\address{ School of Mathematics, University of Bristol,  Bristol, BS8 1TW, UK}
\email{fatemeh.mohammadi@bristol.ac.uk}
\author{Caroline Uhler}
\address{Department of Electrical Engineering \& Computer Science, and Institute for Data, Systems and Society, Massachusetts Institute of Technology, Cambridge MA, USA}
\email{cuhler@mit.edu}
\author{Charles Wang}
\address{Department of Mathematics, University of California, Berkeley, CA, USA}
\email{ charles@math.berkeley.edu}
\author{Josephine Yu}
\address{School of Mathematics, Georgia Institute of Technology,
        Atlanta GA, USA}
\email {jyu@math.gatech.edu}

\date{}
\thanks{Keywords: Graphical model, graphoid, permutohedron, causal inference, submodular function, matroid, entropy.}
\thanks{MSC(2010): 62H05 (primary); 52B12, 52B40 (secondary)}

\begin{abstract}
A graphical model encodes conditional independence relations via the Markov properties. For an undirected graph these conditional independence relations can be represented by a simple polytope known as the graph associahedron, which can be constructed as a Minkowski sum of standard simplices. There is an analogous polytope for conditional independence relations coming from a regular Gaussian model, and it can be defined using multiinformation or relative entropy.  For directed acyclic graphical models and also for mixed graphical models containing undirected, directed and bidirected edges,  we give a construction of this polytope, up to equivalence of normal fans, as a Minkowski sum of matroid polytopes.  Finally, we apply this geometric insight to construct a new ordering-based search algorithm for causal inference via directed acyclic graphical models.
\end{abstract}
\maketitle

\section{Introduction}

A graphical model encodes conditional independence (CI) relations via the Markov properties. Our main goal is to understand the polyhedral geometry and combinatorics of the collection of CI relations encoded by a directed acyclic graph (DAG), a directed graph without directed cycles.  It is natural, especially in view of causal inference, to associate to each conditional independence statement a collection of pairs of adjacent permutations of random variables that are compatible with that statement. Each of these pairs can be viewed as an edge of a permutohedron or a wall in the $S_n$ fan, which is the normal fan of the permutohedron.  Removing these walls gives a coarsening of the fan and a natural question is whether this fan is the normal fan of a polytope. 

For undirected graphical models, the theory is well understood.  The coarsening of the $S_n$ fan corresponding to the CI relations encoded by an undirected graph is the normal fan of a polytope called a \emph{graph associahedron}~\cite{Morton_et_al}.  These polytopes are Minkowski sums of standard simplices (MSS), and their facial structure has a nice description via {\em tubings}~\cites{Carr_Devadoss, PostnikovReinerWilliams}. 

In this paper we will show that the coarsened $S_n$ fan arising from any DAG model is the normal fan of a polytope, which we call a \emph{DAG associahedron}.  We give two concrete constructions of DAG associahedra, one using multiinformation or relative entropy, and another using matroids. While in this paper we mainly concentrate on DAG models, we also show that these two constructions can be extended to more general graphical models that have been studied in the literature containing a mix of undirected, directed and bidirected edges. In contrast to graph associahedra, we show that DAG associahedra are in general not simple polytopes and cannot be realized as a Minkowski sum of standard simplices. Our main motivation for studying DAG associahedra is causal inference: Given a set of CI relations that are inferred from data, the goal is to estimate the underlying DAG model, also known as a Bayesian network. A DAG is defined by an ordering of the nodes and an undirected graph. We show how our geometric insight on DAG associahedra can be applied to construct a new ordering-based search algorithm for causal inference. 

Other polyhedral approaches for learning Bayesian networks have been described in the literature~\cite{Cussens_2016, hemmecke2012characteristic, Jaakkola_2010, Studeny_2010, Studeny_2012}. These approaches are based on using integer programming or linear programming relaxations to maximize a score function over a polytope --- most notably, the Family Variable Polytope (FVP) and the Characteristic Imset Polytope (CIP), whose vertices correspond to all possible DAGs on $n$ nodes, up to Markov equivalence, respectively. While the FVP and the CIP are high-dimensional ($n(2^{n-1}-1)$ and $(2^{n}-n-1)$, respectively) and very complex polytopes (facet description only known for $n\leq 4$), we here present a new polyhedral approach for learning Bayesian networks that is based on DAG associahedra, $n-1$-dimensional polytopes for which we give a concrete construction.

\section{Notation and background}
\label{sec:background}

In this section, we discuss the relationship between CI relations, the $S_n$ fan, and generalized permutohedra.  We refer the reader to Appendix~\ref{sec:def} for basic definitions concerning polytopes and fans and to Appendix~\ref{sec:dictionary} for a ``dictionary'' of concepts.


Let $[n]=\{1,\dots, n\}$, and let $\mathbb{P}$ be a joint distribution on the random variables $X_i$ for $i\in [n]$. For notational simplicity we often write $I$ for $\{X_i: i\in I\}$ where $I\subseteq [n]$. For pairwise disjoint subsets $I,J,K\subset [n]$ we say that $I$ is \emph{conditionally independent} of $J$ given $K$ under $\mathbb{P}$ if the conditional probability $\mathbb{P}(\mathcal{A}\mid J,K)$ does not depend on $J$ for any measurable set $\mathcal{A}$ in the sample space of~$X_I$. This statement is denoted by $I\independent_\mathbb{P} J\mid K$ or simply $I\independent J\mid K$. If $K=\emptyset$, we write $I\independent J$. The set of CI relations arising from a distribution satisfies the following basic implications, known as the \emph{semigraphoid} properties~\cite{Pearl}:
\begin{enumerate}
\item[(SG1')] if $I\independent J\mid L$ then $J\independent I\mid L$,
\item[(SG2')] if $I\independent J\mid L$ and $U\subseteq I$, then $U\independent J\mid L$,
\item[(SG3')] if $I\independent J\mid L$ and $U\subseteq I$, then $I\setminus U\independent J\mid (U\cup L)$,
\item[(SG4')] if $I\independent J\mid L$ and  $I\independent K\mid J\cup L$, then $I\independent (J\cup K)\mid L$.
\end{enumerate}

In this paper, CI relations can be considered as formal constructs and do not necessarily have probabilistic interpretation.  In addition, we will only work with relations in which $I$ and $J$ are both singletons, denoted by lowercase letters $i, j$; see~\cite{Matus_1992}. To simplify notation, we use concatenation to denote union among subsets and elements of $[n]$, e.g.\ $Lij$ means $L \cup \{i,j\}$. Then a {\em semigraphoid} can be identified with a set of elementary CI relations
\begin{enumerate}
\item[(SG1)] if $i\independent j\mid L$ then $j\independent i\mid L$,
\item[(SG2)] if $i\independent j\mid L$ and  $i\independent k\mid jL$, then $i\independent k\mid L$ and $i\independent j\mid kL$,
\end{enumerate}
for distinct $i,j,k\in[n]$ and $L\subseteq [n]\setminus\{i,j,k\}$.

For distributions with strictly positive densities such as regular Gaussian distributions, the \emph{intersection axiom} holds in addition to the semigraphoid axioms, namely
\begin{enumerate}
\item[(INT)] if $i\independent j\mid kL$ and  $i\independent k\mid jL$, then $i\independent j \mid L$ and $i\independent k\mid L$.
\end{enumerate}
The implications (SG1), (SG2) and (INT) together are known as the \emph{graphoid} properties. Note that these implications are not a complete list of CI implications that hold for distributions. In fact, Studen\'y~\cite{Studeny92} proved that there exists no finite such characterization. 

In~\cite{Matus_Gaussoids}, Ln\v{e}ni\v{c}ka and Mat\'u\v{s} defined {\em gaussoids} as the graphoids satisfying the following additional axioms: 
\begin{enumerate}
\item[(G1)] if $i \independent j \mid L$ and $i \independent k \mid L$, then $i \independent j \mid kL$ and $i \independent k \mid jL$,
\item[(G2)] if $i \independent j \mid L$ and $i \independent j \mid kL$, then $i \independent k \mid L$ or $j \independent k \mid L$.
\end{enumerate}
The property (G1) is the converse of the intersection axiom, and (G2) is known as {\em weak transitivity}.  The CI relations of any regular Gaussian distribution form a gaussoid, but not all gaussoids arise this way.  The set of CI relations coming from any undirected graphical model or a DAG model can be faithfully represented by a regular Gaussian distribution, hence forming a gaussoid.


We will associate a geometric object to a collection of CI relations as follows: Consider the hyperplanes in $\R^n$ defined by equations of the form $x_i = x_j$ for all $1 \leq i <j \leq n$.  The complement of these hyperplanes consists of points in $\R^n$ with distinct coordinates, and they are partitioned into $n!$ connected components corresponding to the permutations of $[n]$ as follows:  We identify a permutation (bijection) $\pi: [n] \rightarrow [n]$ with the linear order $\pi(1) \succ \pi(2) \succ \cdots \succ \pi(n)$.  To every vector $u \in \R^n$ with distinct coordinates, we associate a linear order $\succ$ on $[n]$ by defining $i \succ j$ if and only if $u_i > u_j$.  For example, the vector $u = (25,4,16,9)$ gives the linear order $1 \succ 3 \succ 4 \succ 2$, which we denote using its {\em descent vector} of the form $(1|3|4|2)$.  Two points in the complement of the hyperplanes $x_i=x_j$ in $\RR^n$ are in the same connected component if and only if they have the same descent vector.

The closures of the $n!$ cones and all their faces form a fan, which we will call the {\em $S_n$ fan}. It is also known as the {\em permutohedral fan} or the {\em $A_{n-1}$ fan} or the {\em braid arrangement fan}.   Each cone in the fan contains the line in direction $(1,1,\dots,1)$ and is generated by a collection of $0/1$ vectors, every pair of which is nested (when each $0/1$ vector is identified with its set of nonzero coordinates). 

To each CI relation $i \independent j \mid K$, where $i,j \in [n]$ distinct and $K\subseteq [n]\setminus\{i,j\}$, we associate pairs of adjacent permutations of the form 
\begin{equation} 
\label{eqn:adjacent}
(a_1|\cdots|a_k|i|j| b_1|\cdots|b_{n-k-2}) \text{ and } (a_1|\cdots|a_k|j|i| b_1|\cdots|b_{n-k-2}),
\end{equation}
where $\{a_1,\dots,a_k\} = K$ and $\{b_1,\dots,b_{n-k-2}\} = [n]\backslash (K\cup\{i,j\})$.    
We will denote such a pair by $(a_1|\cdots|a_k|i\,j| b_1|\cdots|b_{n-k-2})$.  
For each relation $i \independent j \mid K$ there are $|K|! \, (n-|K|-2)!$ such pairs.

A fan $\cF$ in $\R^n$ is said to be a {\em coarsening} of the $S_n$ fan if every cone in the $S_n$ fan is contained in a cone of $\cF$, or equivalently, if every cone of $\cF$ is a union of some cones of the $S_n$ fan. 
In particular, maximal cones of $\cF$ are unions of maximal cones of the $S_n$ fan, and $\cF$ can be constructed from the $S_n$ fan by removing certain walls (codimension one cones).  This gives an equivalence relation on $S_n$ --- two permutations are equivalent if and only if their corresponding cones in the $S_n$ fan are contained in the same cone in $\cF$.  Such an equivalence relation coming from a fan is called a {\em convex rank test} in~\cite{Morton_et_al}.  We will see in \S\ref{sec:causalInference} that for DAG models this equivalence relation coincides with that coming from the Sparsest Permutation Algorithm of Raskutti and Uhler~\cite{SP_alg}.

We identify a coarsening of the $S_n$ fan with the collection of walls which are removed.  Each wall corresponds to an adjacent pair of permutations as in (\ref{eqn:adjacent}), which gives a CI relation \mbox{$i \independent j \mid \{a_1,\dots,a_k\}$}.  It was shown in \cite[Theorem~6]{Morton_et_al} that a set of walls forms the missing walls in a fan that coarsens the $S_n$ fan if and only if the corresponding set of CI relations forms a semigraphoid.  In particular, if the wall associated to the pair (\ref{eqn:adjacent}) is not a wall in a coarsened $S_n$ fan $\cF$, then any pair obtained by permuting the $a$'s among themselves and the $b$'s among themselves is also not a wall in~$\cF$.

A complete fan $\cF$ in $\RR^n$ is called {\em polytopal} if it is the normal fan of a polytope. The $S_n$ fan itself is polytopal since it is the normal fan of a \emph{permutohedron} $P_n$ defined as follows.  Let $a_1 < a_2 < \dots < a_n$ be real numbers.  Let 
\[P_n = \conv\{(a_{\sigma(1)}, a_{\sigma(2)}, \dots, a_{\sigma(n)}) : \sigma \in S_n \} \subseteq \R^n.\]
Different choices of $a_i$'s give different polytopes but with the same normal fan.  We associate to each vertex of $P_n$ a permutation given by its descent vector as explained above, e.g.\ a point with coordinates $(2,3,4,1) \in \R^4$ is associated with its descent vector $(3|2|1|4)$, which is a permutation and {\em not} a point in $\RR^4$.
Two vertices of $P_n$ are connected by an edge if and only if their descent vectors differ by an adjacent transposition as in (\ref{eqn:adjacent}).  Thus each CI relation corresponds to a certain set of edges of~$P_n$.

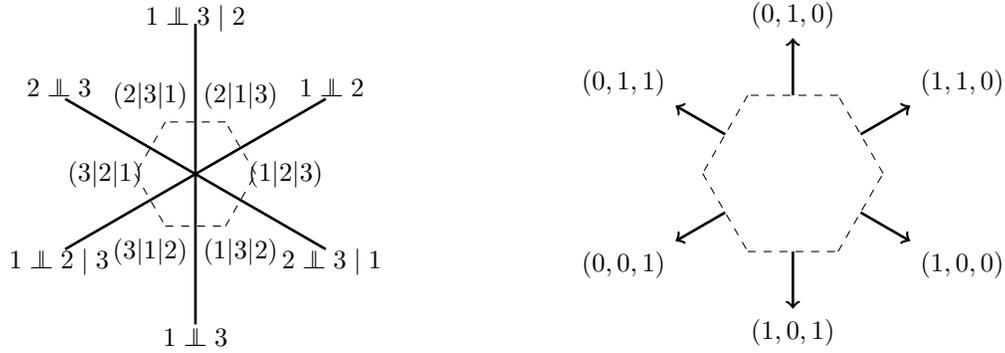
\begin{figure}[]
\begin{minipage}{0.45\textwidth}\begin{center}
\begin{tikzpicture}[scale=0.4,
                    every node/.style={font=\small}]
\draw[line width=1pt] (0,5) -- (0,-5);
\draw[line width=1pt] (4.33012701892,2.5) -- (-4.33012701892,-2.5);
\draw[line width=1pt] (-4.33012701892,2.5) -- (4.33012701892,-2.5);

\node at (0,5.2) {$1\independent 3\mid2$};
\node at (0,-5.4) {$1\independent 3$};
\node at (4.53012701892,2.9) {$1\independent 2$};
\node at (4.53012701892,-2.9) {$2\independent 3\mid 1$};
\node at (-4.53012701892,2.9) {$2\independent 3$};
\node at (-4.53012701892,-2.9) {$1\independent 2\mid 3$};

\node at (1.5, 2.59807621135) {$(2|1|3)$};
\node at (-1.5, 2.59807621135) {$(2|3|1)$};
\node at (3,0) {$(1|2|3)$};
\node at (-3,0) {$(3|2|1)$};
\node at (1.5, -2.59807621135) {$(1|3|2)$};
\node at (-1.5, -2.59807621135) {$(3|1|2)$};

\draw[dashed] (1, 1.73205080757) -- (-1, 1.73205080757);
\draw[dashed] (-1, 1.73205080757) -- (-2,0);
\draw[dashed] (2,0) -- (1, 1.73205080757);
\draw[dashed] (-2,0) -- (-1, -1.73205080757);
\draw[dashed] (1, -1.73205080757) -- (2,0);
\draw[dashed] (-1, -1.73205080757) -- (1, -1.73205080757);
\end{tikzpicture}
\end{center}
(a) The $S_3$ fan modulo the line $(1,1,1)$. Maximal cones are labeled with permutations and the walls are labeled with CI relations.
\end{minipage}
\quad
\begin{minipage}{0.45\textwidth}\begin{center}
\begin{tikzpicture}[scale = 0.6,
                    every node/.style={font=\small}]
\draw[dashed] (1, 1.73205080757) -- (-1, 1.73205080757);
\draw[dashed] (-1, 1.73205080757) -- (-2,0);
\draw[dashed] (2,0) -- (1, 1.73205080757);
\draw[dashed] (-2,0) -- (-1, -1.73205080757);
\draw[dashed] (1, -1.73205080757) -- (2,0);
\draw[dashed] (-1, -1.73205080757) -- (1, -1.73205080757);

\draw[line width=1pt,->] (0,1.73205080757) -- (0,3);
\draw[line width=1pt,->] (0,-1.73205080757) -- (0,-3);
\draw[line width=1pt,->] (1.5,0.86602540378) -- (2.59807621135,1.5); 
\draw[line width=1pt,->] (-1.5,-0.86602540378) -- (-2.59807621135,-1.5);
\draw[line width=1pt,->] (-1.5,0.86602540378) -- (-2.59807621135,1.5);
\draw[line width=1pt,->] (1.5,-0.86602540378) -- (2.59807621135,-1.5);

\node at (0,3) [above] {$(0,1,0)$}; 
\node at (0,-3) [below] {$(1,0,1)$};
\node at (2.59807621135,1.5) [above right] {$(1,1,0)$};
\node at (2.59807621135,-1.5) [below right] {$(1,0,0)$};
\node at (-2.59807621135,1.5) [above left] {$(0,1,1)$};
\node at (-2.59807621135,-1.5) [below left] {$(0,0,1)$};
\end{tikzpicture}
\end{center}

(b) Permutohedron $P_3$ with outer normals of its facets.  \vspace{0.5cm}
\end{minipage}
\caption{\label{fig:S3P3} The permutohedron $P_3$ and its normal $S_3$-fan. Only the descent vectors, not coordinate vectors, of the vertices of $P_3$ are shown in (a). }
\end{figure}

A {\em generalized permutohedron} (see \cite{PostnikovReinerWilliams}) is a polytope whose normal fan is a coarsening of the $S_n$ fan. See Figures~\ref{fig:S3P3} and~\ref{fig:simplices} for some examples. These polytopes have other equivalent definitions, and are also called {\em $M$-convex polyhedra} or {\em base polyhedra}~\cite[(4.43)]{MurotaBook}.  Their projections along a coordinate direction give {\em generalized polymatroids}~\cite[Theorem~3.58]{FujishigeBook}.  We use the term ``generalized permutohedron'' to highlight the connection to permutations.

\begin{example}[Undirected graphical models and graph associahedra]
\label{sec_graphical_models}
Let $G$ be an undirected graph with node set $[n]$. We associate a random variable $X_i$ to each node $i$ of the graph. The joint distribution $\mathbb{P}$ of the random vector $X=(X_1, \dots ,X_n)$ satisfies the \emph{undirected (global) Markov property} with respect to $G$ if $I\independent J\mid K$ for all disjoint subsets $I, J, K\subseteq [n]$ such that $K$ \emph{separates} $I$ and $J$ in $G$, i.e.\ every path between nodes $i\in I$ and $j\in J$ passes through a node $k\in K$. If a distribution $\mathbb{P}$ satisfies exactly the CI relations corresponding to separations in the graph $G$, then $\mathbb{P}$ is called \emph{faithful} or \emph{perfectly Markovian} with respect to~$G$. 

For any undirected graph there exist faithful regular Gaussian distributions; see~\cite[Chapter~3]{Lauritzen} for more details. Hence for any undirected graph $G$ the corresponding CI relations defined by the Markov property satisfy the gaussoid axioms. The coarsened $S_n$ fan associated to the gaussoid of an undirected graph is the normal fan of a polytope, which can be realized as the Minkowski sum of standard simplices \mbox{$\Delta_I = \conv\{e_i : i \in I\}$} where $I$ runs over all sets of nodes that induce connected subgraphs of $G$~\cite{Morton_et_al}.
These polytopes are called {\em graph associahedra} and were studied in~\cite{Devadoss, Carr_Devadoss, PostnikovReinerWilliams}.  
\qed 
\end{example}

\begin{figure}[t]
\begin{minipage}{0.25\textwidth}\begin{center}
\begin{tikzpicture}[scale=0.8]
\draw[line width=1pt] (0,0) -- (3,0);
\draw[line width=1pt] (0,0) -- (1.5,2.59807621135);
\draw[line width=1pt] (3,0) -- (1.5,2.59807621135);
\end{tikzpicture}
\end{center}
(a) The standard simplex $\conv\{e_1,e_2,e_3\}$. \vspace{1.5cm}
\end{minipage}
\quad
\begin{minipage}{0.25\textwidth}\begin{center}
\begin{tikzpicture}[scale=0.8]
\draw[line width=1pt] (0,0) -- (3,0);
\draw[line width=1pt] (0,0) -- (1.5,-2.59807621135);
\draw[line width=1pt] (3,0) -- (1.5,-2.59807621135);
\end{tikzpicture}
\end{center}
(b) The matroid polytope of $U_{2,3}$, $\conv\{e_1+e_2, e_1+e_3, e_2+e_3\}$. \vspace{1cm}
\end{minipage}
\quad
\begin{minipage}{0.33\textwidth}\begin{center}
\begin{tikzpicture}[scale=0.3,
                    every node/.style={font=\small}]
\draw[line width=1pt] (6, 3.46410161513775) -- (-2, 3.46410161513775);
\draw[line width=1pt] (-2, 3.46410161513775) -- (-4,0);
\draw[line width=1pt] (4,0) -- (6, 3.46410161513775);
\draw[line width=1pt] (-4,0) -- (-2, -3.46410161513775);
\draw[line width=1pt] (2, -3.46410161513775) -- (4,0);
\draw[line width=1pt] (-2, -3.46410161513775) -- (2, -3.46410161513775);
\end{tikzpicture}
\end{center}
(c) The DAG associahedron of $1 \rightarrow 3 \leftarrow 2$ with CI relation $1 \independent 2$.  This polytope is not a Minkowski sum of scaled standard simplices.
\end{minipage}
\caption{\label{fig:simplices} Some generalized permutohedra. Compare with the fan in Figure~\ref{fig:S3P3}.}
\end{figure}
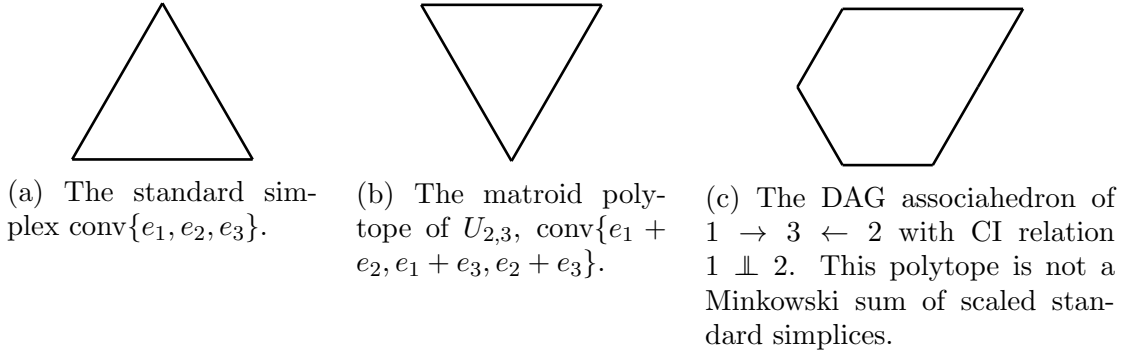

We will now summarize a characterization of coarsened $S_n$ fans that are polytopal, based on~\cite{Morton_et_al} and~\cite{HMSSW}.   
Let $2^{[n]}$ denote the power set of $[n]$, the set of all subsets of $[n]$. 
A function $\omega : 2^{[n]} \rightarrow \RR$ is called {\em submodular} if
\begin{equation}
\label{eqn:submodular}
\omega(Ki) + \omega(Kj) \geq \omega(Kij)+ \omega(K) 
\end{equation}
 for all $K\subseteq [n]$ and $i,j \in[n] \backslash K$.
A submodular function also satisfies $\omega(A) + \omega(B) \geq \omega(A\cup B)+ \omega(A \cap B)$ for all $A,B \subseteq [n]$.  Note that a submodular function on $2^{[n]}$ is an {\em $L$-convex} function on the unit cube $\{0,1\}^n$~\cite{MurotaBook}. 

\begin{defn}
\label{def:submodular}
 A semigraphoid on $[n]$ is called {\em submodular} if there is a submodular function $\omega$ on $2^{[n]}$
 with $\omega(\emptyset) = 0$
such that $\omega(Ki) + \omega(Kj) = \omega(Kij)+ \omega(K)$  if and only if the relation $i \independent j \mid K$ is in the semigraphoid.
\end{defn}
Submodular semigraphoids correspond to {\em structural independence models}~\cite[\S 5.4.2]{Studeny}, which can be viewed as semigraphoids obtained from supermodular functions, whose negatives are submodular functions. 

The following result shows that every submodular function determines a semigraphoid, and the semigraphoids that arise this way are precisely those corresponding to polytopal coarsenings of the $S_n$ fan.

\begin{lem}
\label{lem:submodular}
A polytope $P \subseteq \RR^n$ is a generalized permutohedron if and only if there exists a submodular function $\omega : 2^{[n]} \rightarrow \RR$ with $\omega(\varnothing) = 0$ such that 
\begin{equation} 
\label{eqn:polytope}
P = \{x \in \RR^n : \sum_{i \in I} x_i \leq \omega(I) \text{ for each nonempty } I \subseteq [n], \text{ and } 
\sum_{i\in[n]}x_i = \omega([n])\}.
\end{equation}
A wall in the $S_n$ fan corresponding to $i \independent j \mid K$ is missing in the normal fan of $P$ defined by $\omega$ as above if and only if $\omega(Ki) + \omega(Kj) = \omega(Kij)+ \omega(K)$.
In particular, a coarsened $S_n$ fan is polytopal if and only if the corresponding semigraphoid is submodular.  
\end{lem}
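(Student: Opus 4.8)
The plan is to prove the three assertions in turn, treating the ``if'' direction of the first statement by Edmonds' greedy algorithm and then handling the ``only if'' direction together with the wall-missing criterion by a single computation about the edges of $P$. Throughout write $e_I = \sum_{i\in I} e_i$ for the indicator vector of $I$, and let $C_\pi$ denote the open maximal cone of the $S_n$ fan corresponding to a linear order $\pi$.

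For the ``if'' direction, suppose $\omega$ is submodular with $\omega(\emptyset)=0$ and let $P$ be the polytope in \eqref{eqn:polytope}. I would invoke the greedy algorithm for the base polytope of a submodular function (see \cite{FujishigeBook}): for each $\pi$ define $v_\pi\in\RR^n$ by $(v_\pi)_{\pi(k)}=\omega(\{\pi(1),\dots,\pi(k)\})-\omega(\{\pi(1),\dots,\pi(k-1)\})$. Submodularity guarantees that each $v_\pi$ lies in $P$ and that, for every $u\in C_\pi$, it is the unique maximizer of $x\mapsto\langle u,x\rangle$ over $P$. Since the maximizer then depends only on the descent vector of $u$, the normal fan of $P$ is a coarsening of the $S_n$ fan, i.e.\ $P$ is a generalized permutohedron.

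For the ``only if'' direction, let $P$ be a generalized permutohedron. The $S_n$ fan has lineality space spanned by $(1,\dots,1)$, hence so does its coarsening, so $P$ lies in a hyperplane $\sum_i x_i = c$. Set $\omega(I)=\max_{x\in P}\sum_{i\in I}x_i = h_P(e_I)$, so that $\omega(\emptyset)=0$ and $\omega([n])=c$. Every facet normal of $P$ is a ray of its normal fan, hence a ray of the $S_n$ fan, hence a $0/1$ vector $e_I$; therefore the valid inequalities $\sum_{i\in I}x_i\le\omega(I)$ together with $\sum_i x_i=\omega([n])$ cut out exactly $P$, giving \eqref{eqn:polytope}. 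It remains to show $\omega$ is submodular, which I fold into the proof of the second statement. Fix $K$ and distinct $i,j\notin K$, and choose adjacent orders $\pi$ and $\pi'$ that agree except that $i,j$ are transposed in the two positions immediately after $K$; their common wall $W=\overline{C_\pi}\cap\overline{C_{\pi'}}$ is a wall of the $i\independent j\mid K$ family. Reading off $\omega$ by telescoping along the initial segments $K,Ki,Kij$ of $\pi$ and $K,Kj,Kij$ of $\pi'$ yields the identity $\omega(Ki)+\omega(Kj)-\omega(Kij)-\omega(K)=(v_{\pi'})_j-(v_\pi)_j$. If $v_\pi\ne v_{\pi'}$ they are the endpoints of an edge $F$ of $P$ whose normal cone contains $W$; since $\Span(W)=\{u:u_i=u_j\}$, the edge direction $v_\pi-v_{\pi'}$ is orthogonal to this hyperplane and hence equals $t(e_i-e_j)$, and evaluating $\langle u,v_\pi-v_{\pi'}\rangle\ge 0$ for $u\in C_\pi$ (where $u_i>u_j$) forces $t\ge 0$. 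Thus $\omega(Ki)+\omega(Kj)-\omega(Kij)-\omega(K)=t\ge 0$, proving submodularity, and this quantity vanishes exactly when $v_\pi=v_{\pi'}$, i.e.\ exactly when the wall is missing. The third statement is then immediate: a submodular semigraphoid is realized by such an $\omega$, whose polytope has precisely the prescribed missing walls, and conversely a polytopal coarsening produces a submodular $\omega$ via $h_P(e_I)$ whose tight inequalities are exactly the given relations.

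The main obstacle is the geometric input in the last step: that distinct adjacent vertices $v_\pi,v_{\pi'}$ span an edge in the root direction $e_i-e_j$ with the correct sign. Everything else is either telescoping bookkeeping or a citation to the greedy algorithm, whereas this edge-direction claim is what converts the combinatorics of the coarsening into the submodular inequality. I expect it to rest on the dimension count $\Span(W)=\{u:u_i=u_j\}$ combined with the containment of $W$ in the edge's normal cone, with the sign extracted from the maximality of $v_\pi$ on $C_\pi$.
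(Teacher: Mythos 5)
Your proof is correct, but it takes a genuinely different route from the paper's. The paper works entirely on the dual (fan) side: it lifts the candidate facet normals $e_I$ (and $-e_{[n]}$) to heights $\omega(I)$ in $\RR^n \times \RR$, identifies the normal fan of $P$ with the projection of the proper faces (lower hull) of the resulting cone $C^*$, and reads off submodularity from the positions of the four lifted vectors $e_I, e_J, e_{I\cap J}, e_{I\cup J}$ relative to a common face --- equality when $e_I, e_J$ are lifted to the same face, strict inequality otherwise; the converse direction and the wall criterion come from the same lower-hull picture. You instead work on the primal (polytope) side: Edmonds' greedy algorithm, cited from Fujishige's book, supplies the vertices $v_\pi$ for the ``if'' direction, and the ``only if'' direction together with the wall criterion follow from your support-function and edge-direction computation, whose key geometric input is that an edge of $P$ whose normal cone contains the wall $W$ must point in the root direction $e_i - e_j$ (orthogonality to $\Span(W) = \{u : u_i = u_j\}$), with the sign pinned down by maximality of $v_\pi$ on $C_\pi$. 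The telescoping identity $\omega(Ki)+\omega(Kj)-\omega(Kij)-\omega(K) = (v_{\pi'})_j - (v_\pi)_j$ is an efficient way to obtain submodularity and the wall criterion in a single stroke, and the logic does chain correctly: for an arbitrary submodular $\omega$ the greedy vertices telescope $\omega$ itself, so tightness $h_P(e_I)=\omega(I)$ and hence the criterion in terms of the given $\omega$ both follow. What each approach buys: yours is shorter and unifies the second and third assertions in one vertex computation, but it leans on the greedy algorithm as a black box --- exactly the kind of external dependence the paper's appendix was written to avoid (the authors remark that complete proofs are hard to find in the literature); the paper's lifting argument is self-contained and its explicit cone setup handles degenerate situations (non-full-dimensional $P$, redundant inequalities) more carefully, whereas your facet-description step (``every facet normal of $P$ is a ray of the $S_n$ fan, hence some $e_I$'') quietly works modulo the lineality line $(1,\dots,1)$ and would need an extra sentence or two when $P$ has dimension less than $n-1$.
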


The lemma follows from the conjugacy between $L$- and $M$- convex functions and also from~\cite[Theorem~4.15]{MurotaBook}.
A part of it appeared in \cite[Proposition~12 and Theorem~14]{Morton_et_al}.  We provide a proof in Appendix~\ref{sec:submodular}, as it is difficult to find a complete proof in the literature.

\begin{remark}
\label{rem:flip}
If $\omega$ is a submodular function on $2^{[n]}$ with $\omega(\emptyset) = 0$, then $\omega' : 2^{[n]} \rightarrow \RR$ defined as $\omega'(S) = \omega([n]\backslash S) - \omega([n])$ is also submodular with $\omega'(\emptyset)=0$. The polytopes $P$ and $P'$, defined by $\omega$ and $\omega'$ as in~(\ref{eqn:polytope}), are related by $-P = P'$.
\qed
\end{remark}

\begin{example}
\label{ex:simplex}
Consider the submodular function $\omega$ on $2^{[n]}$ whose value is~$1$~on all nonempty sets and~$0$~on the empty set.  This function is known as the rank function of the uniform rank one matriod on $[n]$.  The generalized permutohedron defined by this submodular function is a standard simplex of dimension $n-1$ whose outer normal vectors are $e_I$ for subsets $I$ of size $n-1$.  Any set of $n-2$ facet normals spans a wall in the normal fan, with pairs of the form~(\ref{eqn:adjacent}), where $K = \emptyset$, corresponding to relations of the form $i \notindependent j \mid \emptyset$.  See Figures~\ref{fig:S3P3} and~\ref{fig:simplices}. 
\qed
\end{example}

This characterization leads to the following questions for any given semigraphoid:

\vspace{0.2cm}
\paragraph{\bf Question A} \emph{Is a given semigraphoid submodular? And if so, can we construct a submodular function with the desired equalities as in Definition~\ref{def:submodular}?}
\vspace{0.2cm}

In the following sections we will give a positive answer to these questions for semigraphoids coming from DAG models.

Rank functions of matroids are submodular functions, so every matroid $M$ on the ground set $[n]$ gives a semigraphoid on $[n]$ as follows:
\begin{align*}
\begin{split}
i \notindependent j \mid K & \iff \rank(Ki) + \rank(Kj) > \rank(Kij)+ \rank(K)
\end{split}
\end{align*}
Note that since a matroid rank function takes integer values and $\rank(Aa) \leq \rank(A)+1$ for any $A \subseteq [n]$ and $a \in [n]$, we obtain
\begin{align}
\label{eqn:matroid}
\begin{split}
\qquad i \notindependent j \mid K &  \iff \rank(K) + 1 = \rank(Ki) = \rank(Kj) = \rank(Kij).
\end{split}
\end{align}

In this case, the coarsening of the $S_n$ fan is the outer normal fan of the \emph{matroid polytope}, which is defined as the convex hull of the indicator functions of the bases of the matroid.  For example, the standard simplex $\Delta_I = \conv\{e_i : i \in I\}$ is the matroid polytope of the rank one matroid in which each element of $I$ forms a base.  The intersection of two semigraphoids (as sets of conditional independence relations) is again a semigraphoid.  This intersection operation corresponds to common refinement, Minkowski sum, and sum, respectively, for fans, polytopes, and submodular functions.

\vspace{0.2cm}

\paragraph{\bf Question B} \emph{Which submodular semigraphoids can be obtained from the sums of rank functions of matroids (as in Definition~\ref{def:submodular})? 
  Which fans arising from semigraphoids are normal fans of Minkowski sums of matroid polytopes?}

\vspace{0.2cm}

For example, the Minkowski sum of all standard simplices $\Delta_I$, for all nonempty subsets $I \subseteq [n]$, is affinely equivalent to the permutohedron $P_n$, i.e.\ they have the same normal fan, which is the entire $S_n$ fan.  This decomposition is not unique, however, e.g.\ $P_3$ is a hexagon and can be decomposed as the Minkowski sum of either two triangles or three line segments, all of which are matroid polytopes.

\begin{example}
\label{ex:notMSS}
Let $\mathcal{G}$ be the following DAG.
\begin{center}
	\begin{tikzpicture}
		\tikzset{vertex/.style = {shape=circle,draw,minimum size=1.5em}}
	\tikzset{edge/.style = {->,> = latex'}}
		\tikzset{edge2/.style = {- = latex'}}
	\node[draw=none,fill=none] (1) at (-1,1) {1};
	\node[draw=none,fill=none] (3) at (0,0) {3};
	\node[draw=none,fill=none] (2) at (1,1) {2};
	\draw[edge] (1) to (3);
	\draw[edge] (2) to (3);
	\end{tikzpicture}
\end{center}
We will see in the next section that the Markov property on $\mathcal{G}$ defines a single CI relation, namely $1 \independent 2$.  Removing the corresponding wall in the $S_3$ fan gives a fan with 5 maximal cones.  Figure~\ref{fig:simplices}(c) depicts a polytope with this normal fan.  It is straightforward to check that this fan is not the normal fan of a Minkowski sum of standard simplices, but it is the normal fan of the Minkowski sum of the simplex in Figure~\ref{fig:simplices}(b) together with two line segments, which are all matroid polytopes.
\qed
\end{example}

\section{Bayesian networks}

Similarly to undirected graphs we can define probabilistic models on DAGs. Such graphical models are also known as \emph{Bayesian networks}. 

Let $\mathcal{G}$ be a DAG with nodes $[n]$. If there is a directed edge from $i$ to $j$ in $\mathcal{G}$, which we denote by $i\rightarrow j$ in $\mathcal{G}$ or $(i,j) \in \mathcal{G}$, the node $i$ is called a \emph{parent} of the node $j$.  The set of all parent nodes of $j$ is denoted by $\emph{\textrm{pa}(j)}$. 

We now review the concept of separation for DAGs. A {\em path} in $\mathcal{G}$ is an alternating sequence of nodes and edges, starting and ending at nodes, in which each edge is adjacent in the sequence to its two endpoints\footnote{This is often called a ``walk'', but we prefer to use ``path'' in order to be consistent with the notion of a ``Bayes ball path''.}. The path may contain repeated edges and nodes.  We do {\em not} assume that the direction of the edges is compatible with the ordering of the nodes in the path.  

\begin{defn}
Let $\mathcal{G}$ be a DAG on $[n]$ and let $i,j \in [n]$ and $K \subseteq [n]\setminus\{i,j\}$.  A \emph{Bayes ball path from $i$ to $j$ given $K$} in $\mathcal{G}$ is a path from $i$ to $j$ in $\mathcal{G}$ such that 
\begin{enumerate}
\item if $a \rightarrow b \rightarrow c$ or $a \leftarrow b \rightarrow c$ or $a \leftarrow b \leftarrow c$ is on the path, then $b \notin K$;
\item if $a \rightarrow b \leftarrow c$ is on the path, then $b \in K$ (where $a$ and $c$ need not be distinct). In this case the node $b$ is called a {\em collider} along the path.
\end{enumerate}

\end{defn}

See Figures~\ref{graphs_ex_1} and~\ref{fig:8nodes} for examples. Informally we think of a directed edge $i \rightarrow j$ as pointing {\em down} from $i$ to $j$.  A ``Bayes ball'' rolls along edges of the DAG.  It cannot roll through nodes that are in $K$, but it can ``bounce off'' them by going down, touching $K$, then going back up  along either the same or a different edge.

For subsets of nodes $I,J,K\subseteq [n]$, we say that $I$ and $J$ are \emph{directionally separated} or \emph{d-separated} by $K$ in $\mathcal{G}$ 
if there is no Bayes-ball path from any element of $I$ to any element of $J$ given $K$~\cite{Verma_Pearl}. This led to the construction of the \emph{Bayes-Ball algorithm}~\cite{Shachter}, an algorithm for determining d-separation statements. 
Similarly as for undirected graphs, we can also associate a random vector with joint distribution $\mathbb{P}$ to the nodes of a DAG $\mathcal{G}$. Then $\mathbb{P}$ satisfies the \emph{directed (global) Markov property} with respect to $\mathcal{G}$ if $I\independent J\mid K$ for all disjoint subsets $I, J, K\subset V$ such that 
$K$ d-separates $I$ and $J$ in $\mathcal{G}$. 
A faithful distribution to $\mathcal{G}$, i.e.~a distribution that satisfies exactly the CI relations corresponding to d-separation in $\mathcal{G}$, can be realized by regular Gaussian distributions (see \S\ref{sec:multiinformation}). Hence, for any DAG $\mathcal{G}$ the CI relations of the form $i \ind j \mid K$, where $i$ and $j$ are d-separated given $K$ in $\mathcal{G}$, form a gaussoid, which we call a \emph{DAG gaussoid}.

It is important to note that while the set of separation statements uniquely determines an undirected graph, this is not the case for d-separation statements for DAGs. Two DAGs are called \emph{Markov equivalent} if they imply the same d-separation statements. The Markov equivalence class is determined by the skeleton of a DAG and its \emph{V-structures} --- triples of nodes $(i,j,k)$ such that $i\to k \leftarrow j$ and $i,j$ are not adjacent~\cite{DAG_equivalence}. An \emph{essential graph}~\cite{DAG_equivalence} (also called a {\em completed partially directed acyclic graph} or \emph{CPDAG} in~\cite{Chickering} and a \emph{maximally oriented graph} in~\cite{Meek}) is a graph with undirected and directed edges that uniquely represents a Markov equivalence class of DAGs. It has the same skeleton as the DAGs in the Markov equivalence class and contains a directed edge $i\to j$ if and only if each DAG in the Markov equivalence class contains the directed edge $i\to j$. 

The following is our main result and answers Questions A and B for DAG gaussoids.
\begin{thm}[Main Theorem]
\label{thm:main}
Every DAG gaussoid is submodular.  Equivalently, the associated coarsening of the $S_n$ fan is the normal fan of a polytope.  Moreover, there is a realization of this polytope as a Minkowski sum of matroid polytopes.
\end{thm}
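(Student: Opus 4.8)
The plan is to realize every DAG gaussoid as a finite \emph{intersection} of matroid semigraphoids, and then to invoke the correspondences recorded in the text: intersection of semigraphoids corresponds to common refinement of fans, to Minkowski sum of polytopes, and to \emph{sum} of the underlying submodular functions. Since each matroid rank function $\rank_M$ is submodular with $\rank_M(\varnothing)=0$, and a sum of submodular functions is again submodular and still vanishes on $\varnothing$, the function $\omega=\sum_t \rank_{M_t}$ is submodular with $\omega(\varnothing)=0$. Its tightness set is exactly the intersection semigraphoid: each summand satisfies the inequality~(\ref{eqn:submodular}), and a sum of nonnegative ``submodularity gaps'' vanishes if and only if every gap does. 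By Lemma~\ref{lem:submodular} this simultaneously shows the DAG gaussoid is submodular, that the associated coarsening of the $S_n$ fan is the normal fan of the polytope cut out by $\omega$ via~(\ref{eqn:polytope}), and that this polytope is the Minkowski sum of the matroid polytopes of the $M_t$. So all three assertions reduce to producing the matroids.

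By~(\ref{eqn:matroid}), the dependence set of the intersection is the \emph{union} of the dependence sets of the $M_t$: we have $i\notindependent j\mid K$ in the intersection iff $i\notindependent_{M_t} j\mid K$ for some $t$. Hence it suffices to exhibit a finite family $\{M_t\}$ of matroids on $[n]$ that is \emph{sound} --- every matroid dependence $i\notindependent_{M_t}j\mid K$ is a genuine d-connection in $\mathcal{G}$ --- and \emph{complete} --- every d-connection $i\notindependent j\mid K$, i.e.\ every Bayes ball path from $i$ to $j$ given $K$, is witnessed by some $M_t$. I would build the matroids from the combinatorics of Bayes ball paths, arranging that all nodes irrelevant to a given pattern are \emph{loops}, so that enlarging $K$ by such nodes leaves the tightness condition for that matroid unchanged.

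For the colliderless part of the combinatorics I would mimic the undirected construction of Example~\ref{sec_graphical_models}: to the node set $I$ of any collider-free path in $\mathcal{G}$ I attach the rank-one matroid $\Delta_I=\conv\{e_i:i\in I\}$, whose dependencies are precisely $i'\notindependent j'\mid K$ for $i',j'\in I$ with $K\cap I=\varnothing$; these are sound because every contiguous sub-path of a collider-free path is again collider-free, hence active, and avoids $K$. The genuinely new DAG feature is the \emph{collider}, which must lie in $K$ to transmit dependence; here a rank-one matroid is useless, and I would instead use a matroid of $U_{2,3}$-type. For the V-structure $i\to b\leftarrow j$, the uniform matroid $U_{2,3}$ on $\{i,b,j\}$ (all other elements loops) has $i\notindependent_M j\mid K$ exactly when $b\in K$, matching Example~\ref{ex:notMSS} and Figure~\ref{fig:simplices}; its only other dependencies $i\notindependent b\mid j$ and $j\notindependent b\mid i$ are sound because the edges $i\to b$ and $j\to b$ are active under any conditioning set. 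For a general active path I would glue these blocks: decompose it into maximal collider-free segments joined at colliders, attach a $\Delta_I$ to each segment and a $U_{2,3}$-type matroid to each collider junction, and define one matroid $M_W$ per pattern so that $i\notindependent_{M_W}j\mid K$ holds exactly when $K$ contains all collider junctions and avoids all interior non-colliders.

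The hard part will be the collider bookkeeping, in two places. First, \emph{design and soundness}: for a segment longer than a single edge, the matroid must be engineered so that placing an interior non-collider into $K$ restores the submodular equality (blocking the connection), while placing a collider into $K$ breaks it (opening the connection); this forces a choice of matroid beyond $U_{2,3}$, e.g.\ transversal- or gammoid-type matroids adapted to the segment/junction structure, together with a check that every relation it asserts is a real Bayes ball connection. Second, \emph{completeness under the descendant rule}: a collider transmits dependence not only when it lies in $K$ but also when it has a descendant in $K$, realized in the walk formulation by rolling down to $K$ and back; I would cover this by also indexing matroids by the directed activating paths from a collider down to $K$, so that every way of activating a collider through a descendant is captured by some $M_W$. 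Verifying that the resulting finite family is exactly sound and complete --- that the union of its dependence sets equals the d-separation relation of $\mathcal{G}$ --- is the crux; once this is established, all three conclusions of the theorem follow immediately from Lemma~\ref{lem:submodular} and the intersection/Minkowski-sum correspondence.
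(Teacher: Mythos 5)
Your framework is exactly the paper's: one matroid per activating walk, take the union of the matroid dependence sets, and then Lemma~\ref{lem:submodular} together with the sum-of-rank-functions / Minkowski-sum / common-refinement correspondence delivers all three conclusions at once. Your reduction to finding a sound and complete family of matroids is correct (the ``sum of nonnegative submodularity gaps'' argument is fine), and your analysis of the V-structure via $U_{2,3}$ with loops is exactly the paper's Example~\ref{ex:notMSS}. However, the proposal stops at precisely the points where the mathematical content of the theorem lies, and these gaps are real, not bookkeeping.

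First, the general matroid is never constructed: you leave it at ``transversal- or gammoid-type matroids adapted to the segment/junction structure.'' The paper's resolution in \S\ref{sec:MSMP} is more elementary than you anticipate: decompose the walk into \emph{treks} (collider-free stretches) and \emph{canyons} (down-and-back excursions from a collider to $K$, Definition~\ref{def:trekcanyon}), and take the parallel connection of rank-$2$ uniform matroids on consecutive trek--canyon--trek triples, with all nodes inside one trek or canyon becoming parallel elements and off-walk nodes becoming loops. Notably, the canyon device already subsumes the ``descendant rule'' you propose to handle by extra indexing: the walk itself contains the descent to $K$, and making the entire canyon a single parallel class is what makes conditioning on a descendant open the collider. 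Second, your glued construction is only well defined if the segments and junctions are pairwise disjoint, which fails for general walks (nodes and edges repeat); the paper needs Lemma~\ref{lem:simplePath} --- every Bayes ball walk can be replaced by a \emph{simple} one alternating disjoint treks and canyons --- before the matroid can even be defined. Third, and this is the crux you explicitly defer: soundness must be proved, not checked block by block. This is Lemma~\ref{lem:BBP}, proved via the flat characterization (F0)--(F2) of the parallel connection: a matroid dependence $a \notindependent b \mid C$ forces $C$ to meet every canyon and miss every trek strictly between $a$ and $b$, whence the subwalk between $a$ and $b$ is itself a Bayes ball path given $C$. Until these three pieces are supplied, what you have is a correct plan --- essentially the paper's own --- but not a proof.
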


The equivalence of the first two statements follows from Lemma~\ref{lem:submodular} above. We call any such polytope resulting from a DAG gaussoid a {\em DAG associahedron}. DAG associahedra are uniquely defined up to equivalence of normal fans, and they only depend on the DAG up to Markov equivalence.  

\begin{remark}
Let $\mathcal{G}$ be a DAG. The normal fan of the DAG associahedron corresponding to $\mathcal{G}$ can be obtained by coarsening the normal fan of the graph associahedron corresponding to the \emph{moral graph} of $\mathcal{G}$ --- the undirected graph with edges $(i,j)$ if $i\to j$ in $\mathcal{G}$, $j\to i$ in $\mathcal{G}$, or $i\to k\leftarrow j$ for some $k$ in $\mathcal{G}$; see Figure~\ref{graphs_ex_1}. \qed
\end{remark}

In the next two sections, we will give two independent proofs for the submodularity of DAG gaussoids.  
In the first proof, in \S\ref{sec:multiinformation}, we use  multiinformation, or relative entropy, to give a formula for the submodular function and hence a realization of DAG associahedra. However, in general the constant terms of the inequalities in this construction are not rational.  We will discuss some heuristic methods for finding exact combinatorial information from approximate inequalities.
In the second proof, in \S\ref{sec:MSMP}, we give a realization of DAG associahedra as Minkowski sums of matroid polytopes, which are integral polytopes.  The submodularity of a semigraphoid can be tested using linear programming~\cite{HMSSW}. So our theorem states that the linear programs coming from DAG gaussoids are always feasible, and our proofs give an explicit construction of a feasible solution.

We illustrate the concepts introduced so far with an example of a DAG model on 4 nodes and describe the corresponding DAG associahedron. 

\begin{example}
\label{ex:4nodes}

Consider the DAG $\mathcal{G}$ shown in Figure~\ref{graphs_ex_1}. An example of a Bayes ball path in $\mathcal{G}$ is the path from node 1 to 2 given $K=\{4\}$, since on the path $1\to 3 \to 4 \leftarrow 3 \leftarrow 2$ the node $3\notin K$ but $4\in K$. 
The DAG gaussoid corresponding to $\mathcal{G}$ consists of the CI relations
$$1\independent 2, \quad 1\independent 4\mid 3, \quad 2\independent 4\mid 3, \quad 1\independent 4\mid \{2, 3\}, \quad 2\independent 4\mid \{1, 3\}.$$
The corresponding edges of the permutohedron are shown in green and blue in Figure~\ref{perm_ex_1}(a).  Since these CI relations form a semigraphoid, we obtain a coarsening of the $S_n$ fan by removing the edges $(1 2 |3|4)$, $(1 2 |4|3)$, $(3 |1 4|2)$, $(3 |2 4|1)$, $(2 |3|1 4)$, $( 3|2|1 4)$, $(1 |3|2 4)$ and $(3 |1|2 4)$. The resulting coarsening of the $S_n$ fan obtained by contracting the colored edges in the permutohedron is polytopal. The convex polytope corresponding to this DAG associahedron is shown in Figure~\ref{perm_ex_1}(c). 

\begin{figure}[!t]
	\centering
	\begin{tikzpicture}
		\tikzset{vertex/.style = {shape=circle,draw,minimum size=1.5em}}
	\tikzset{edge/.style = {->,> = latex'}}
		\tikzset{edge2/.style = {- = latex'}}

	\node[draw=none,fill=none] (1) at (3,0) {1};
	\node[draw=none,fill=none] (2) at (5,0) {2};
	\node[draw=none,fill=none] (3) at (4,-1) {3};
	\node[draw=none,fill=none] (4) at (4,-2) {4};

	\draw[edge] (1) to (3);
	\draw[edge] (2) to (3);
	\draw[edge] (3) to (4);

	\node[draw=none,fill=none] (1) at (9,0) {1};
	\node[draw=none,fill=none] (2) at (11,0) {2};
	\node[draw=none,fill=none] (3) at (10,-1) {3};
	\node[draw=none,fill=none] (4) at (10,-2) {4};

	\draw[edge2] (1) to (3);
	\draw[edge2] (1) to (2);
	\draw[edge2] (2) to (3);
	\draw[edge2] (3) to (4);
	
	\end{tikzpicture}

\caption{\label{graphs_ex_1}The DAG $\mathcal{G}$ (left) and its moral graph $G$ (right) discussed in Example~\ref{ex:4nodes}.}
\end{figure}
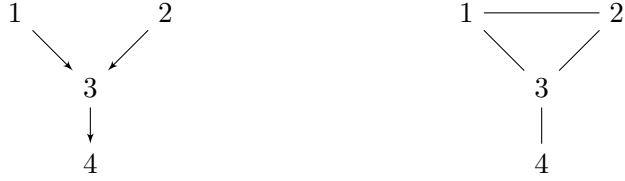

\begin{figure}[!b]
\centering
\begin{minipage}{0.3\textwidth}
\begin{center}
\includegraphics[scale=0.44]{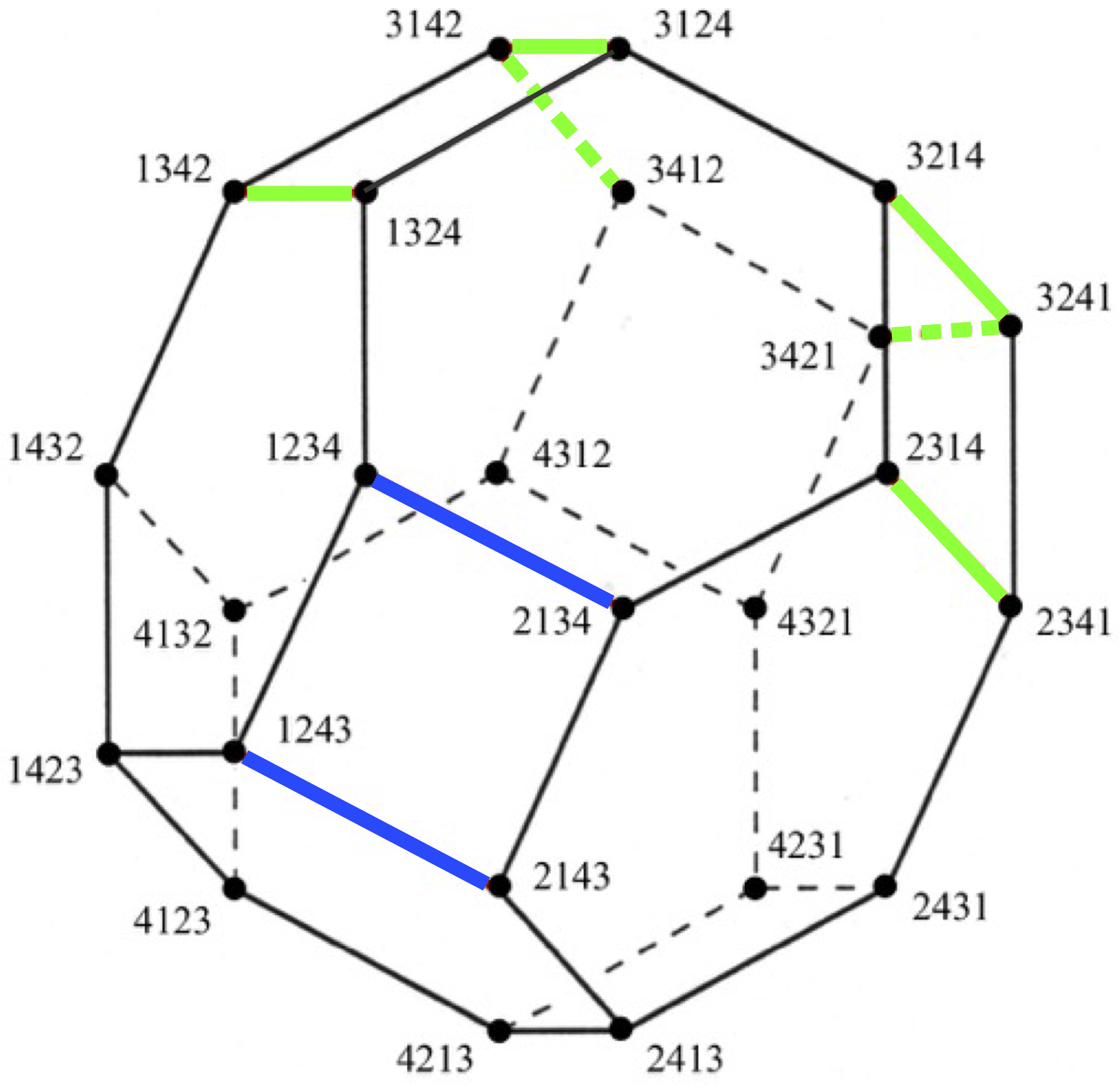}\label{perm_1}
\end{center}
(a) Permutohedron $P_4$. The green edges correspond to CI relations in the moral graph $G$ in Example~\ref{ex:4nodes}.  The blue edges correspond to the additional CI relations in $\mathcal{G}$.
\end{minipage}
\;\;
\begin{minipage}{0.3\textwidth}
\begin{center}
\includegraphics[scale=0.33]{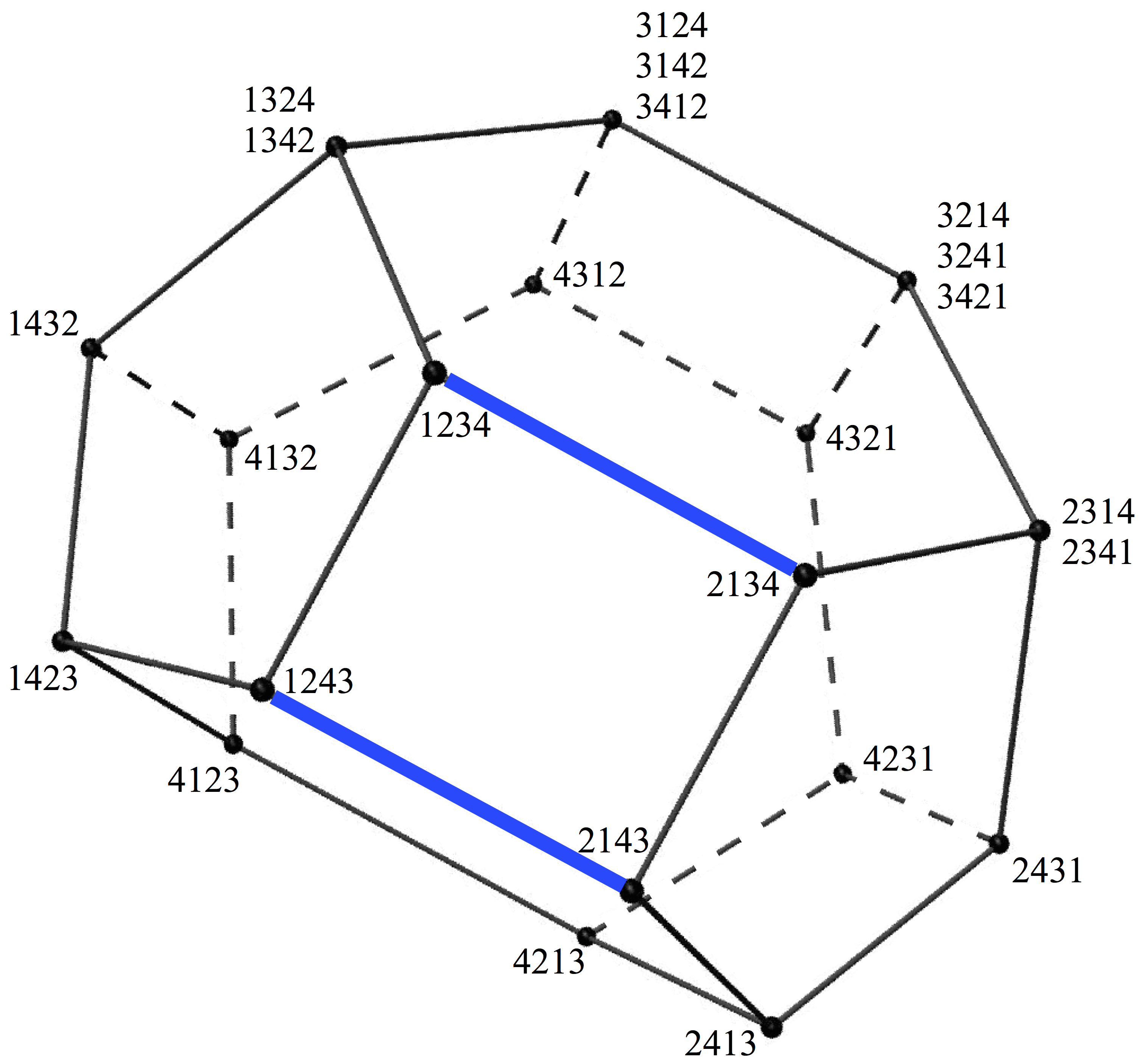}\label{moral_ass}
\end{center}
(b) The graph associahedron of the moral graph $G$ obtained by contracting the green edges. \vspace{0.8cm}
\end{minipage}
\;\;
\begin{minipage}{0.3\textwidth}
\begin{center}
\includegraphics[scale=0.35]{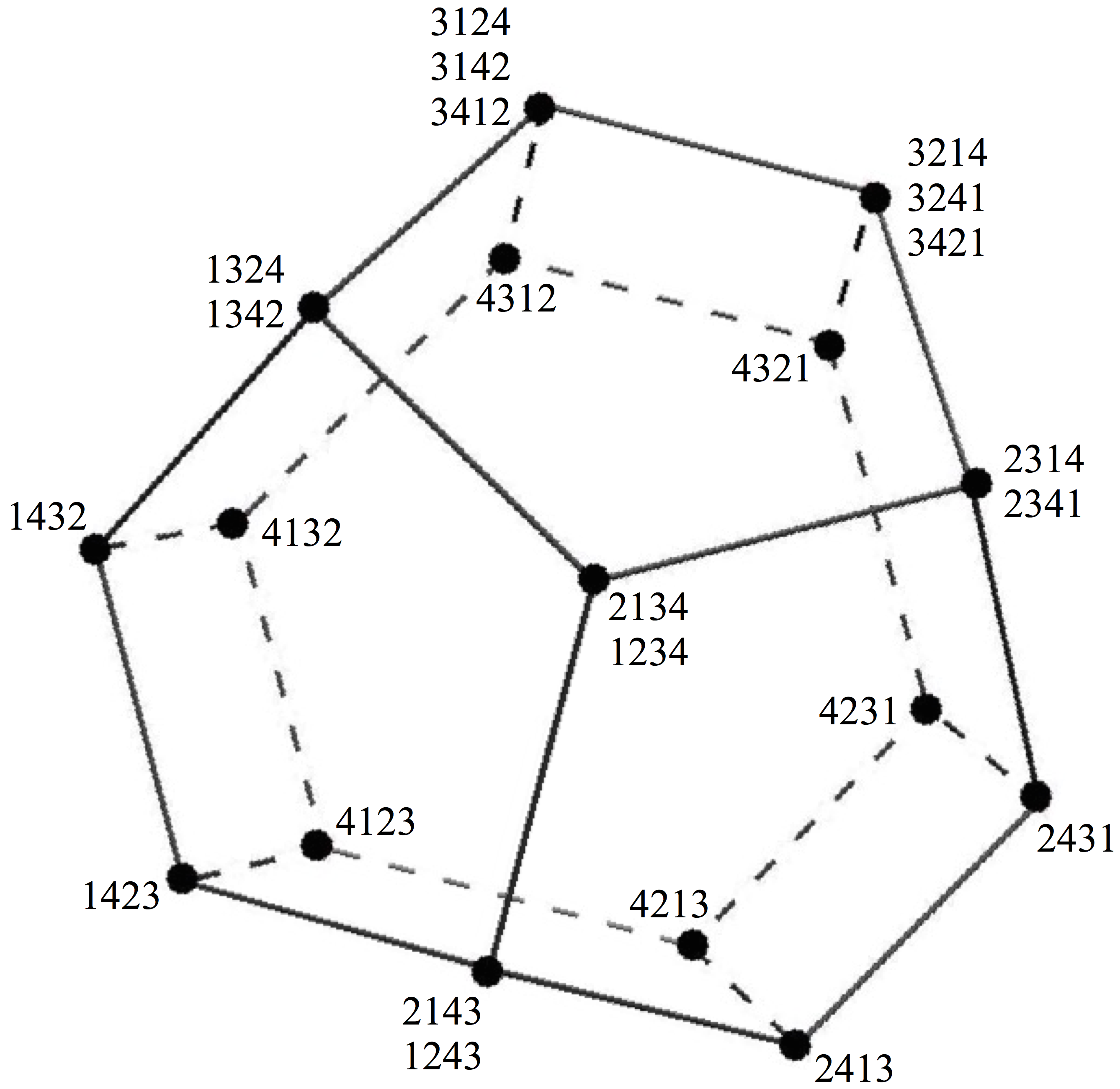}\label{associahedra_1}
\end{center}
(c) DAG associahedron for $\mathcal{G}$ obtained by contracting both, green and blue edges. \vspace{0.8cm}
\end{minipage}
\caption{\label{perm_ex_1}  The vertices are labeled by descent vectors of permutations, with ``$\mid$''s removed.  The figures show how the combinatorics of the polytope changes as edges are contracted, but the polytopes are not drawn to be geometrically correct.}  
\end{figure}

The moral graph $G$ of $\mathcal{G}$ is shown in Figure~\ref{graphs_ex_1} (right). The gaussoid corresponding to $G$ consists of the CI relations
$$1\independent 4\mid 3, \quad 2\independent 4\mid 3, \quad 1\independent 4\mid \{2, 3\}, \quad 2\independent 4\mid \{1, 3\}.$$
In general, any DAG gaussoid contains the gaussoid of its moral graph. The edges corresponding to the CI relations for the moral graph are shown in green in Figure~\ref{perm_ex_1}(a). By contracting the green edges in the permutohedron we obtain the graph associahedron corresponding to $G$ shown in Figure~\ref{perm_ex_1}(b). By further contracting also the blue edges, we obtain the DAG associahedron corresponding to $\mathcal{G}$.


As we will see in Proposition~\ref{prop:MSS-Vstructs}, the DAG associahedron in this example cannot be realized as a Minkowski sum of simplices.  However, we will show in \S\ref{sec:MSMP} that it can be realized as the following Minkowski sum of matroid polytopes:
\[\Delta_{13} + \Delta_{23} + \Delta_{34} + \Delta_{134} + \Delta_{234} + \conv\{e_{12},e_{13},e_{23}\} + \conv\{e_{12},e_{13},e_{23},e_{14},e_{24}\}.\]
As we will see in \S\ref{sec:MSMP}, the first three polytopes in the sum correspond to the three edges in the DAG, the next two correspond to the paths $1\,3\,4$ and $2\,3\,4$, which have no colliders, and the last two correspond to the paths $1\,3\,2$ (given $3$) and $1\,3\,4\,3\,2$ (given $4$) respectively.  See Example~\ref{ex:4nodesAgain}.
\qed
\end{example}

We end this section with two observations about DAG associahedra. In the following example we show that unlike graph associahedra, DAG associahedra need not be \emph{simple}.
\begin{example}[A non-simple DAG associahedron]
\label{ex:notSimple}

Let $\mathcal{G}$ be the following DAG.
\begin{center}
	\begin{tikzpicture}
		\tikzset{vertex/.style = {shape=circle,draw,minimum size=1.5em}}
	\tikzset{edge/.style = {->,> = latex'}}
		\tikzset{edge2/.style = {- = latex'}}
	\node[draw=none,fill=none] (1) at (0,1) {1};
	\node[draw=none,fill=none] (2) at (1,1) {2};
	\node[draw=none,fill=none] (3) at (1,0) {3};
	\node[draw=none,fill=none] (4) at (0,0) {4};
	\draw[edge] (1) to (2);
	\draw[edge] (2) to (3);
	\draw[edge] (3) to (4);
	\draw[edge] (1) to (4);
	\draw[edge] (2) to (4);
	\end{tikzpicture}
\end{center}
 The corresponding DAG gaussoid consists only of the CI relation $1 \independent 3 \mid 2$, which corresponds to a single edge $2|13|4$ on the permutohedron $P_4$.  Contracting this edge gives a vertex adjacent to 4 edges on a 3-dimensional polyhedron, so the resulting polytope is not simple.  In this case, the combinatorial operation of contracting the edge can be realized geometrically by pushing the two neighboring square facets toward each other until they meet at a vertex. In other words, the edge shared by two hexagonal faces contracts to a single vertex.
\qed
\end{example}

Furthermore, as already mentioned in Example~\ref{ex:4nodes}, unlike graph associahedra, DAG associahedra need not be Minkowski sums of standard simplices (MSS) in the sense of \cite{Morton_et_al}. In fact, the following result shows that a DAG associahedron can be realized as a MSS if and only if the DAG gaussoid equals the gaussoid of its moral graph, or in other words, if and only if the DAG model coincides with an undirected graphical model.

\begin{prop}
\label{prop:MSS-Vstructs}
The DAG associahedron associated to a DAG $\mathcal{G}$ is MSS if and only if $\mathcal{G}$ does not contain any V-structures, i.e.\ the DAG model coincides with an undirected graphical model.
\end{prop}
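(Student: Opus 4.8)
The plan is to characterize the MSS property through a \emph{monotonicity} property of the underlying semigraphoid, and then to show that V-structures are exactly the obstruction to monotonicity. Recall that a standard simplex $\Delta_I=\conv\{e_i:i\in I\}$ is the matroid polytope of the rank-one matroid on $I$, with submodular function $\omega_I(S)=\mathbbm{1}[S\cap I\neq\emptyset]$. If the DAG associahedron is realized as an MSS, its submodular function has the form $\omega=\sum_I y_I\,\omega_I$ with $y_I\ge 0$, where the terms with $|I|=1$ are modular, contribute only a translation, and may be ignored. The first step is a lemma: \emph{every semigraphoid arising from an MSS is monotone in its conditioning set}, i.e.\ if $i\ind j\mid K$ lies in the semigraphoid and $K\subseteq K'\subseteq[n]\setminus\{i,j\}$, then $i\ind j\mid K'$ lies in it too. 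To prove this I would use Lemma~\ref{lem:submodular}: the relation $i\ind j\mid K$ holds iff $\omega(Ki)+\omega(Kj)=\omega(Kij)+\omega(K)$, which after substituting $\omega=\sum_I y_I\,\omega_I$ reads $\sum_I y_I\bigl(\omega_I(Ki)+\omega_I(Kj)-\omega_I(Kij)-\omega_I(K)\bigr)=0$. Each summand is nonnegative by submodularity, and a direct computation shows the $I$-th summand equals $y_I$ exactly when $\{i,j\}\subseteq I$ and $K\cap I=\emptyset$, and $0$ otherwise. Hence the relation holds iff every $I$ with $y_I>0$ and $\{i,j\}\subseteq I$ meets $K$; enlarging $K$ preserves this condition, which is monotonicity.

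For the forward implication I would argue the contrapositive: if $\mathcal{G}$ contains a V-structure $i\to k\leftarrow j$ with $i,j$ non-adjacent, then the DAG gaussoid fails monotonicity and hence, by the lemma, cannot be MSS. By acyclicity at least one of $i,j$ is a non-descendant of the other; after relabeling assume $j\notin\mathrm{de}(i)$. The local directed Markov property gives $i\ind j\mid\pa(i)$, so I set $L=\pa(i)$. Since $k\in\mathrm{de}(i)$ we have $k\notin L$ and $\mathrm{de}(k)\cap L=\emptyset$, so the length-two path $i\to k\leftarrow j$ is blocked at the collider $k$ under $L$, giving $i\ind j\mid L$, but becomes an active Bayes-ball path once $k$ is added to the conditioning set, giving $i\notindependent j\mid Lk$. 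This contradicts monotonicity, so $\mathcal{G}$ with a V-structure is not MSS.

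For the reverse implication, suppose $\mathcal{G}$ has no V-structures. Then moralization introduces no new edges, so the moral graph $G$ equals the skeleton of $\mathcal{G}$, and by the classical theory of Markov equivalence~\cite{DAG_equivalence} the d-separation relations of a V-structure-free DAG coincide with the separation relations of its skeleton. Consequently the DAG gaussoid equals the gaussoid of the undirected graph $G$. By the graph associahedron construction recalled in Example~\ref{sec_graphical_models}, this gaussoid is realized as the Minkowski sum $\sum\Delta_I$ over subsets $I$ inducing connected subgraphs of $G$, so the DAG associahedron is MSS. Combining the two implications yields the stated equivalence, and the final clause (coincidence with an undirected model) is exactly the equality of the two gaussoids just established.

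The routine computation underlying the monotonicity lemma, and the appeal to the undirected-graph realization, are both straightforward; I expect the main obstacle to be the V-structure construction in the forward direction. The delicate points there are selecting the endpoint so that $L=\pa(i)$ is a genuine d-separator (which requires reading the local Markov property through a topological order and using the non-descendant hypothesis) and checking that adjoining $k$ activates the collider path without some descendant of $k$ already lying in $L$ and re-blocking it. I would also be careful to record that ``is MSS'' is a property of the semigraphoid, equivalently of the normal-fan class, and is independent of the chosen submodular realization, so that monotonicity is genuinely forced by the existence of any MSS realization.
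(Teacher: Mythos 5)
Your proposal is correct and takes essentially the same route as the paper: your monotonicity lemma is precisely the paper's Lemma~\ref{lem:MSS} (which the paper proves via the matroid-polytope interpretation of $\Delta_I$ and unions of dependence relations, rather than your equivalent submodular-function computation), and your two implications mirror the paper's proof exactly. The only cosmetic difference is the choice of blocking set in the V-structure direction --- you condition on $\pa(i)$ for a suitably chosen endpoint, while the paper conditions on the common non-descendants of $i$ and $j$ --- both of which exhibit the same violation of monotonicity.
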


We saw in Example~\ref{ex:notMSS} that a V-structure cannot be MSS.  We can generalize this example to the following consequence of~\cite[Proposition~20]{Morton_et_al}:

\begin{lem}
\label{lem:MSS}
If a semigraphoid arises from a Minkowski sum of standard simplices, then for any $i,j\in[n]$ distinct and $K \subseteq K' \subseteq [n]\setminus\{i,j\}$, we have
\[
i \independent j \mid K  \implies i \independent j \mid K' .
\] 
\end{lem}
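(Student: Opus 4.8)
The plan is to reduce the statement to a direct computation with the submodular function defining the polytope. First I would recall from Example~\ref{ex:simplex} that the standard simplex $\Delta_I = \conv\{e_i : i \in I\}$ is the generalized permutohedron defined by the submodular function $\omega_I(S) = \mathbbm{1}[S \cap I \neq \emptyset]$, namely the rank function of the rank-one matroid on $I$. Since Minkowski sum of polytopes corresponds to sum of submodular functions (as noted before Question~B), a Minkowski sum of scaled standard simplices $P = \sum_I c_I \Delta_I$ with scalars $c_I \geq 0$ is defined by $\omega = \sum_I c_I \omega_I$, and by Lemma~\ref{lem:submodular} the semigraphoid arising from $P$ consists of exactly those relations $i \independent j \mid K$ for which the submodularity gap $\omega(Ki)+\omega(Kj)-\omega(Kij)-\omega(K)$ vanishes.

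Next I would exploit additivity of this gap. Each $\omega_I$ is submodular, so its individual gap $g_I(i,j,K) := \omega_I(Ki)+\omega_I(Kj)-\omega_I(Kij)-\omega_I(K)$ is nonnegative, and the total gap is $\sum_I c_I\, g_I(i,j,K)$. Hence the total gap vanishes if and only if $g_I(i,j,K) = 0$ for every $I$ with $c_I > 0$. A short case analysis on the indicator $\omega_I$ then shows that $g_I(i,j,K) = 1$ precisely when $\{i,j\} \subseteq I$ and $K \cap I = \emptyset$, and $g_I(i,j,K) = 0$ otherwise: if $K \cap I \neq \emptyset$ all four terms equal $1$, while if $K \cap I = \emptyset$ the gap reduces to $\mathbbm{1}[i \in I] + \mathbbm{1}[j \in I] - \mathbbm{1}[i \in I \text{ or } j \in I]$, which is $1$ exactly when both $i,j \in I$. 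This yields the clean combinatorial characterization: $i \independent j \mid K$ holds in the semigraphoid of $P$ if and only if every $I$ with $c_I > 0$ and $\{i,j\} \subseteq I$ also meets $K$, i.e.\ $K \cap I \neq \emptyset$.

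Finally the desired implication is immediate from monotonicity in $K$. Assume $i \independent j \mid K$ and $K \subseteq K'$, and take any $I$ with $c_I > 0$ and $\{i,j\} \subseteq I$. By the characterization $K \cap I \neq \emptyset$, and since $K \subseteq K'$ we get $K' \cap I \supseteq K \cap I \neq \emptyset$. As this holds for every such $I$, the characterization applied to $K'$ gives $i \independent j \mid K'$, completing the proof.

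The only genuinely computational step is the evaluation of $g_I$, and it is routine; the main conceptual point that makes everything work is that the gap of a sum of submodular functions is the sum of the individual gaps, so vanishing of the total gap can be tested summand-by-summand. I therefore expect no serious obstacle. As a sanity check, the converse direction of this monotonicity fails for general submodular semigraphoids: the V-structure $1 \to 3 \leftarrow 2$ of Example~\ref{ex:notMSS} satisfies $1 \independent 2$ but not $1 \independent 2 \mid 3$, consistent with its associahedron not being MSS.
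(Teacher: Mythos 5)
Your proposal is correct and takes essentially the same approach as the paper: your $\omega_I$ is precisely the rank function of the rank-one matroid on $[n]$ with loops $[n]\setminus I$, and your case analysis of the gap $g_I$ is the same computation the paper performs via~(\ref{eqn:matroid}) to conclude that the dependence relations of $\Delta_I$ are exactly those triples with $\{i,j\}\subseteq I$ and $K\cap I=\emptyset$, after which both arguments finish by monotonicity of the condition $K\cap I\neq\emptyset$ in $K$. The only difference is bookkeeping: you re-derive the fact that Minkowski summation corresponds to taking the union of dependence relations (via additivity and nonnegativity of the submodular gaps), whereas the paper invokes this correspondence directly.
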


\begin{proof}
For $I \subseteq [n]$, the standard simplex $\Delta_I$ is the matroid polytope of the rank one matroid on $[n]$ whose loops are $[n] \backslash I$.  By~(\ref{eqn:matroid}) the semigraphoid corresponding to $\Delta_I$ contains $i \notindependent j \mid K$ where $i,j \in I$ and $K \cap I = \emptyset$.  Taking a Minkowski sum of such simplices corresponds to taking the union of the associated conditional dependence statements.
It follows that $i \notindependent j \mid K' \implies i \notindependent j \mid K$ for all $K \subseteq K' \subseteq [n]\setminus\{i,j\}$.
\end{proof}

Using Lemma~\ref{lem:MSS} we can now easily prove Proposition~\ref{prop:MSS-Vstructs}.

\begin{proof}[Proof of Proposition~\ref{prop:MSS-Vstructs}]
If $\mathcal{G}$ does not contain any V-structures, then the corresponding DAG gaussoid  is equivalent to the gaussoid obtained from an undirected graph, namely the skeleton of $\mathcal{G}$, so it is MSS.  

Conversely, suppose that $\mathcal{G}$ contains a V-structure $i \rightarrow \ell \leftarrow j$.  Let $K\subset [n]$ be the set of nondescendants of $i$ and $j$ in $\cG$, i.e.\ the set of $k \in [n]\setminus\{i,j\}$ such that there is no directed path from $i$ to $k$ or from $j$ to $k$ in $\cG$.  Then the CI relation $i \independent j \mid K$ is contained in the gaussoid corresponding to $\mathcal{G}$. However, the CI relation $i \independent j \mid K \ell$ is not in the gaussoid of $\mathcal{G}$, since there is a Bayes ball path from $i$ to $j$ given $K\ell$ in $\mathcal{G}$. Hence by Lemma~\ref{lem:MSS} above, the DAG associahedron corresponding to $\mathcal{G}$ is not MSS.\end{proof}

\section{A construction of DAG associahedra from multiinformation}
\label{sec:multiinformation}

The {\em multiinformation} of a probability measure $\mathbb{P}$ on $[n]$ is a function $m_\mathbb{P} : 2^{[n]} \rightarrow [0, \infty]$ defined by
$$
m_\mathbb{P}(S) = H(\mathbb{P} | \Pi_{i\in S}\mathbb{P}^{\{i\}}),
$$
where $H$ denotes the relative entropy with respect to a product of one-dimensional marginals~$\mathbb{P}^{\{i\}}$.  For the case of most interest to us, when $\mathbb{P}$ is a regular Gaussian, there is a simpler formula as follows. Let $\mathbb{P}$ be a regular Gaussian measure on $[n]$ with covariance matrix $\Sigma$.  Let $\Gamma$ be the correlation matrix of $\mathbb{P}$ --- a symmetric positive definite matrix obtained from $\Sigma$ by simultaneously rescaling the rows and columns so that all the diagonal entries are equal to one.  In other words, $\Gamma = D^{-1/2}\Sigma D^{-1/2}$ where $D = \textrm{diag}(\Sigma)$. Then we have 
\begin{equation}
\label{eqn:GaussianCIs}
i \independent j \mid K \iff \rank(\Gamma_{Ki,Kj}) \leq |K|
\end{equation}
where $\Gamma_{A,B}$ denotes the submatrix of $\Gamma$ with rows and columns indexed by $A$ and $B$, respectively~\cite{Sullivant}.  
By~\cite[Corollary~2.6]{Studeny} the multiinformation $m_\mathbb{P}(A)$ for $A \subseteq [n]$ is 
\[
m_\mathbb{P}(A) = -\frac{1}{2}\log \det (\Gamma_{A,A}).
\]
Since $\Gamma$ is positive definite, all its principal minors $\det(\Gamma_{A,A})$ are nonzero.  We define $\det (\Gamma_{\emptyset,\emptyset})$ to be~$1$. By~\cite[Corollary~2.2]{Studeny} we have
\[ m_\mathbb{P}(A) = 0 \text{ for all }A \subseteq [n], |A| \leq 1, \text{ and }\]
\[m_\mathbb{P}(ABC) + m_\mathbb{P}(C) \geq m_\mathbb{P}(AC) + m_\mathbb{P}(BC) \text{ for all }A,B,C \subset [n]\]
with equality if and only if $A \independent B \mid C$ under $\mathbb{P}$. 

We summarize this discussion in the following lemma.
\begin{lem}
\label{lem_Gaussian_multiinf}
If $\mathbb{P}$ is a regular Gaussian distribution with correlation matrix $\Gamma$, then its semigraphoid is submodular, with submodular function given by
\[
A \mapsto \log\det (\Gamma_{A,A}).
\] 
\end{lem}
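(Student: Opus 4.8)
The plan is to obtain the lemma directly from the properties of the multiinformation $m_\mathbb{P}$ already recorded above, since passing from $m_\mathbb{P}$ to the claimed function is only a matter of a sign and a scaling. First I would record the identity $\omega(A) := \log\det(\Gamma_{A,A}) = -2\,m_\mathbb{P}(A)$ for every $A \subseteq [n]$, which is immediate from the formula $m_\mathbb{P}(A) = -\tfrac12\log\det(\Gamma_{A,A})$; positive-definiteness of $\Gamma$ guarantees that every principal minor $\det(\Gamma_{A,A})$ is positive, so the logarithm is real-valued and $\omega$ is well defined. In particular $\omega(\emptyset) = \log\det(\Gamma_{\emptyset,\emptyset}) = \log 1 = 0$, the normalization demanded by Definition~\ref{def:submodular}.

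Next I would convert the supermodularity of $m_\mathbb{P}$ into the submodularity of $\omega$. Specializing Studen\'y's inequality $m_\mathbb{P}(ABC)+m_\mathbb{P}(C) \ge m_\mathbb{P}(AC)+m_\mathbb{P}(BC)$ to the singleton case $A=\{i\}$, $B=\{j\}$, $C=K$ (with $i,j \in [n]\setminus K$ distinct) gives $m_\mathbb{P}(Kij)+m_\mathbb{P}(K) \ge m_\mathbb{P}(Ki)+m_\mathbb{P}(Kj)$; multiplying by $-2$ reverses the inequality and yields precisely $\omega(Ki)+\omega(Kj) \ge \omega(Kij)+\omega(K)$, which is the submodularity condition~(\ref{eqn:submodular}). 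Hence $\omega$ is a submodular function on $2^{[n]}$ with $\omega(\emptyset)=0$.

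Finally I would match the equality cases. The same specialization of Studen\'y's statement says that equality holds in that inequality---equivalently $\omega(Ki)+\omega(Kj) = \omega(Kij)+\omega(K)$---if and only if $i \independent j \mid K$ under $\mathbb{P}$, that is, exactly when this elementary relation belongs to the semigraphoid of $\mathbb{P}$. This is the defining condition of Definition~\ref{def:submodular}, so $\omega$ witnesses that semigraphoid as submodular. There is no genuine analytic obstacle here: the nontrivial inputs---well-definedness from positive-definiteness, and the supermodularity-with-equality of $m_\mathbb{P}$---are already furnished by the cited corollaries of Studen\'y. The only point meriting a line of care is the bookkeeping observation that Definition~\ref{def:submodular} demands the equality clause only for singletons $i,j$, so invoking Studen\'y's inequality in the singleton case suffices and no statement about non-elementary conditional independence is needed.
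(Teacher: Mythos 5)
Your proof is correct and takes essentially the same approach as the paper: the paper presents this lemma explicitly as a summary of the immediately preceding discussion, namely the identity $m_\mathbb{P}(A)=-\tfrac12\log\det(\Gamma_{A,A})$ and Studen\'y's supermodularity-with-equality of the multiinformation, which is precisely the sign-and-scaling argument you spell out. Your closing bookkeeping remark that Definition~\ref{def:submodular} requires the equality condition only for elementary (singleton) relations is consistent with the paper's conventions, so nothing is missing.
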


The submodularity of DAG gaussoids (i.e.~the first part of Theorem~\ref{thm:main}) follows from the lemma above and the fact that any DAG gaussoid has a faithful regular Gaussian realization. See for example~\cite[\S3.3]{DrtonSturmfelsSullivant}, where the following construction is described.

Let $\mathcal{G}$ be a DAG on the nodes $[n]$. Assume that the nodes are labeled so that if $i \rightarrow j$ is an edge in $\mathcal{G}$, then $i < j$.  Let $\Lambda$ be an upper-triangular matrix whose entries have the form
\[
\Lambda_{i,j} = \left\{ 
\begin{array}{ll} 
1 & \text{ if } i = j,  \\
-\ell_{ij} & \text{ if } i \rightarrow j \text{ is an edge in }\mathcal{G}, \\
0 & \text{ otherwise, }
\end{array} \right. 
\]
where $\ell_{ij}$ are real numbers.  Let $K=\Lambda \Lambda^T$ and $\Sigma = K^{-1}$.  Then $K$ is symmetric positive definite by construction, and so is $\Sigma$.  For almost all choices of real numbers $\ell_{ij}$ (apart from an algebraic hypersurface), a Gaussian distribution $\mathbb{P}$ with covariance matrix $\Sigma$ is faithful to the DAG gaussoid of $\mathcal{G}$~\cite{URBY}.

In fact, as explained in the following lemma, the inequalities for the desired generalized permutohedron can also be computed directly from minors of $K = \Lambda \Lambda^T$ instead of from the correlation matrix $\Gamma=D^{-1/2}\Lambda^{-T}\Lambda^{-1}D^{-1/2}$, where $D=\textrm{diag}(\Lambda^{-T}\Lambda^{-1})$. This result simplifies computations considerably since we do not need to perform any matrix inversion on $\Lambda\Lambda^T$.

\begin{lem}
\label{lem:multiinformation}
Let $K$ be a positive definite matrix and let $\omega$ be the submodular function on $2^{[n]}$ given by $\omega(A) = \log \det (K_{A,A})$. Let $P$ be the polytope defined as in~(\ref{eqn:polytope}).  Then $-P$ is the generalized permutohedron corresponding to the semigraphoid of a regular Gaussian distribution $\mathbb{P}$ with covariance matrix $\Sigma=K^{-1}$.
\end{lem}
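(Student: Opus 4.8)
The plan is to trace the semigraphoid of $\mathbb{P}$ back to $\omega(A)=\log\det(K_{A,A})$ by combining a complementary-minor identity with the flip of Remark~\ref{rem:flip} and with the invariance of normal fans under translation. I first record that $\omega$ is genuinely submodular and that $\omega(\emptyset)=\log\det(K_{\emptyset,\emptyset})=0$, so that $P$ in~(\ref{eqn:polytope}) is an honest generalized permutohedron. Submodularity follows from Lemma~\ref{lem_Gaussian_multiinf} applied to the correlation matrix of $K$: the function $\log\det(K_{A,A})$ differs from the $\log\det$ of the correlation principal minors by the additive term $\sum_{i\in A}\log K_{ii}$, which is modular and hence preserves submodularity.

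The first main step is the Jacobi identity for principal minors of the inverse: for $\Sigma=K^{-1}$ and every $A\subseteq[n]$,
\[
\det(\Sigma_{A,A}) = \frac{\det(K_{[n]\setminus A,\,[n]\setminus A})}{\det(K)},
\]
where the sign is $+1$ precisely because the row and column index sets coincide. Taking logarithms gives $\log\det(\Sigma_{A,A}) = \omega([n]\setminus A)-\omega([n])$, which is exactly the flipped function $\omega'$ of Remark~\ref{rem:flip}. That remark then yields $-P = P'$, where $P'$ is the polytope defined by $\omega'(A)=\log\det(\Sigma_{A,A})$.

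It remains to identify $P'$ with the generalized permutohedron corresponding to the semigraphoid of $\mathbb{P}$. Writing $\Gamma_{A,A} = D_A^{-1/2}\Sigma_{A,A}D_A^{-1/2}$ with $D_A=\textrm{diag}(\Sigma_{ii}:i\in A)$, I obtain $\omega_\Gamma(A):=\log\det(\Gamma_{A,A}) = \omega'(A) - \sum_{i\in A}\log\Sigma_{ii}$, so $\omega'$ and the multiinformation function $\omega_\Gamma$ of Lemma~\ref{lem_Gaussian_multiinf} differ by a modular function. The key point is that the submodular defect $\omega(Ki)+\omega(Kj)-\omega(Kij)-\omega(K)$ is invariant under adding a modular function, since every modular function satisfies this relation with equality for all triples $(i,j,K)$. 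Hence the equality locus of $\omega'$ equals that of $\omega_\Gamma$, which by Lemma~\ref{lem_Gaussian_multiinf} is exactly the set of relations $i\ind j\mid K$ holding under $\mathbb{P}$. By the second assertion of Lemma~\ref{lem:submodular}, the normal fan of $P'$ has the same missing walls as the coarsening of the $S_n$ fan determined by the semigraphoid of $\mathbb{P}$, hence the same normal fan; combined with $-P=P'$ this proves the claim.

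I expect the only genuine obstacle to be the clean statement and sign-verification of the complementary-minor identity. Everything after that is bookkeeping: the flip of Remark~\ref{rem:flip} and the modular shift between $\omega'$ and $\omega_\Gamma$ both leave the normal fan unchanged, which is all that is needed since generalized permutohedra here are understood up to equivalence of normal fans.
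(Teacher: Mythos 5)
Your proof is correct and follows essentially the same route as the paper: the complementary-minor (Jacobi/Schur complement) identity giving $\log\det(\Sigma_{A,A})=\omega([n]\setminus A)-\omega([n])$, Remark~\ref{rem:flip} to conclude $-P=P'$, and a modular-shift argument to pass between the $\Sigma$-based and $\Gamma$-based submodular functions. The only cosmetic difference is that you justify this last step via invariance of the submodular defect and the missing-wall criterion of Lemma~\ref{lem:submodular}, whereas the paper phrases it as a coordinate translation of the polytope (by $-\log\Sigma_{i,i}$ in each direction $i$), which leaves the normal fan unchanged; the two justifications are equivalent.
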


\begin{proof}
 The polytope defined by the submodular function $A \mapsto \log \det(\Gamma_{A,A})$ is obtained from the polytope defined by the submodular function $A \mapsto \log \det(\Sigma_{A,A})$ by translation in each coordinate direction~$i$ by $-\log \Sigma_{i,i}$.  Thus these two polytopes have the same normal fans and encode the same semigraphoids.

 For $A \subseteq [n]$ and $B = [n] \backslash A$, we have $(\Sigma_{A,A})^{-1} = K_{A,A} - K_{A,B}(K_{B,B})^{-1}K_{B,A}$, the Schur complement.  Using the equality $\det(K) = \det(K_{B,B})\cdot\det(K_{A,A} - K_{A,B}(K_{B,B})^{-1}K_{B,A})$, we obtain
\begin{align*}
\log \det(\Sigma_{A,A}) & = - \log \det (\Sigma_{A,A})^{-1} \\
& = - \log \det(K_{A,A} - K_{A,B}(K_{B,B})^{-1}K_{B,A}) \\
& = \log \det (K_{B,B}) - \log \det(K) .
\end{align*}
Combining this with Remark~\ref{rem:flip}, it follows that the polytopes given by $A \mapsto \log \det(\Sigma_{A,A})$ and by $A \mapsto \log \det(K_{A,A})$ are negatives of each other. 
\end{proof}

In other words, by using $K$ instead of $\Sigma$ we obtain the \emph{dual semigraphoid} defined in~\cite{Morton_et_al}.  In particular, if a semigraphoid has a faithful regular Gaussian distribution, then so does its dual.

\begin{example}[Multiinformation of the 4-node DAG in Example~\ref{ex:4nodes}]

We start by constructing $\Lambda$ from $\mathcal{G}$ using edge weights $1$ (i.e. $\ell_{ij}=1$ if $i\rightarrow j$ is an edge in $\mathcal{G}$). This choice of edge weights is sufficiently generic, since it results in a distribution that is faithful to $\mathcal{G}$. We then compute $\Lambda \Lambda^T$:

\begin{minipage}{.4\linewidth}
\[
\Lambda=\left(\begin{array}{cccc}
1 & 0 & -1 & 0\\
0 & 1 & -1 & 0\\
0 & 0 & 1 & -1\\
0 & 0 & 0 & 1\\
\end{array}\right)
\]
\end{minipage}
\begin{minipage}{.4\linewidth}
\[
K =  \Lambda \Lambda^T = 
\bgroup\begin{pmatrix}2&
     1&
     {-1}&
     0\\
     1&
     2&
     {-1}&
     0\\
     {-1}&
     {-1}&
     2&
     {-1}\\
     0&
     0&
     {-1}&
     1\\
     \end{pmatrix}\egroup
\]
\end{minipage}

\noindent Taking the log of the principal minors, we arrive at the system of inequalities:
\begin{equation*}
\begin{split}
x_1&\le\log{2}\\
x_2&\le\log{2}\\
x_3&\le\log{2}\\
x_4&\le 0 \\
x_1+x_2&\le\log{3}\\
\end{split}
\quad\quad
\begin{split}
x_1+x_3&\le\log{3}\\
x_1+x_4&\le\log{2}\\
x_2+x_3&\le\log{3}\\
x_2+x_4&\le\log{2}\\
x_3+x_4&\le 0\\
\end{split}
\quad\quad
\begin{split}
x_1+x_2+x_3&\le\log{4}\\
x_1+x_2+x_4&\le\log{3}\\
x_1+x_3+x_4&\le 0 \\
x_2+x_3+x_4&\le 0 \\
x_1+x_2+x_3+x_4&= 0\\
\end{split}
\end{equation*}

For instance, the submatrix $K_{\{1,3\},\{1,3\}}$ is $\begin{pmatrix} 2 & -1\\-1 &2 \end{pmatrix}$, whose determinant is $3$, giving the inequality $x_1 + x_3 \leq  \log{3} $. These inequalities give a realization of the DAG associahedron in Example~\ref{ex:4nodes}.  This realization is geometrically different from but has the same normal fan as the rational realization obtained by matroid polytopes that we will present in \S\ref{sec:MSMP}.
\qed
\end{example}

As seen in the example above, there is a problem with this construction: 
The constant terms in the inequalities will almost never be all rational numbers, making it difficult to obtain exact combinatorial information such as the $f$-vector and the normal fan.  
We found that the following {\em heuristic} works well in practice to obtain exact combinatorial information from this polytope description:  First, round off the real numbers in the inequalities to nearby rational numbers (e.g.\ using 52 bit precision).  Then use exact arithmetic to compute the vertices of the polytope defined by these approximate inequalities.  This results in approximations of the true vertices.  Then form an approximate {\em slack matrix} by evaluating each approximate inequality at each approximate vertex and replace the entries in the slack matrix by $0$ or $1$ depending on whether the entry is approximately zero or not (e.g.\ by rounding off to 35 bit precision) to obtain an {\em incidence matrix} between the vertices and facets.  By eliminating duplicate rows and columns from this matrix we obtain the incidence matrix of a new polytope, from which its face lattice can be computed. In our simulations, the incidence matrix obtained this way gives the correct number of vertices and facets of the DAG associahedron, at least in small dimensions. However, this does {\em not} immediately lead to a rational realization of the polytope. Our code is available on Github at \url{https://github.com/foxflo/DAG-associahedra}.

It is possible that in some (or even all) instances we may be able to choose the parameters $\ell_{ij}$'s in such a way that the logarithms of the principal minors are all rational.  However, we do not know of any systematic way to do this, nor do we know of a systematic way to transform a nonrational realization into a rational realization.  We leave this as an open problem for future work. However, it is clear that if there is a nonrational realization, then there is also a rational realization, since a realization is a submodular function that satisfies some of the inequalities in \eqref{eqn:submodular} (those corresponding to the CI relations) at equalities and the rest as strict inequalities, and these linear constraints have rational coefficients.

To end this section, note that although for this paper it is sufficient to study the Gaussian setting, submodularity of the multiinformation holds for any probability distribution with finite multiinformation, which includes for example marginally continuous measures~\cite{Studeny}. Hence any set of CI relations that has a faithful realization by a distribution with finite multiinformation gives rise to a polytope similar to a DAG associahedron when contracting the edges correspond to CI relations in the permutohedron.

\section{A construction of DAG associahedra as Minkowski sums of matroid polytopes}
\label{sec:MSMP}

In the following, we obtain a construction of DAG associahedra as Minkowski sums of matroid polytopes, resulting in a rational realization of these polytopes. Until now we viewed a semigraphoid as defined by CI relations. However, we can equivalently define a semigraphoid by its complementary conditional dependence relations.  Minkowski addition of generalized permutohedra translates to taking the union of the corresponding conditional dependence relations, since the union of normal cones to the edges of the Minkowski sum is the union of normal cones to the edges in the summand polytopes.\footnote{In other words, the tropical hypersurface of a Minkowski sum of polytopes is the union of the tropical hypersurfaces of individual polytopes.}  

For every dependence relation $i \notindependent j \mid K$ in the semigraphoid defined by a DAG $\mathcal{G}$, we wish to find a matroid whose semigraphoid, defined by its rank function as in (\ref{eqn:matroid}), contains the given relation and whose dependence relations are all valid for the semigraphoid of the DAG.  

We now describe how to construct these matroids. For any conditional dependence relation $i \notindependent j \mid K$ in the semigraphoid defined by a DAG~$\mathcal{G}$, there is a Bayes ball path from $i$ to $j$ given $K$.  
We first partition the Bayes ball path into {\em canyons} and  {\em treks} as follows.

\begin{defn} 
\label{def:trekcanyon}
A {\em trek} along a path is a consecutive subpath that does not contain any colliders.
A {\em canyon} along a path is a consecutive subpath that is palindromic with exactly one collider in the middle such that all edges are directed toward the collider.
A Bayes ball path is called {\em simple} if no node is repeated except in the same canyon and the maximal canyons are pairwise disjoint.
\end{defn}
If we think of the arrows as always pointing down, then a canyon is a path that first goes down and then backtracks up the same edges to the first node. See Figure~\ref{fig:8nodes} for an example. A single collider is a canyon by itself but not necessarily a maximal one.

The {\em active paths} in~\cite{Shachter}  can be obtained from simple Bayes ball paths by replacing each canyon with only the top of the canyon, e.g.\ for the Bayes ball path $1\,4\,8\,4\,3$ (given $\{8\}$) we get an active path $1\,4\,3$.  We prefer to keep the canyons in the path because we will need them for our matroid construction below.

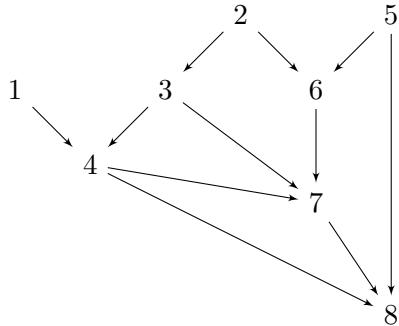
\begin{figure}[b!]
	\begin{tikzpicture}
	\tikzset{vertex/.style = {shape=circle,draw,minimum size=1.5em}}
	\tikzset{edge/.style = {->,> = latex'}}
	\node[draw=none,fill=none] (1) at (0,-1) {1};
	\node[draw=none,fill=none] (2) at (3,0) {2};
	\node[draw=none,fill=none] (3) at (2,-1) {3};
	\node[draw=none,fill=none] (4) at (1,-2) {4};
	\node[draw=none,fill=none] (5) at (5,0) {5};
	\node[draw=none,fill=none] (8) at (5,-4) {8};
	\node[draw=none,fill=none] (6) at (4,-1) {6};
	\node[draw=none,fill=none] (7) at (4,-2.5) {7};
	
	\draw[edge] (1) to (4);
	\draw[edge] (2) to (3);
	\draw[edge] (2) to (6);
	\draw[edge] (3) to (4);
	\draw[edge] (3) to (7);
	\draw[edge] (4) to (8);
	\draw[edge] (4) to (7);
	\draw[edge] (5) to (8);
	\draw[edge] (5) to (6);
	\draw[edge] (6) to (7);
	\draw[edge] (7) to (8);
	\end{tikzpicture}
\caption{In the DAG, $\overline{1} \rightarrow \underline{4} \leftarrow \overline{3 \leftarrow 2} \rightarrow \underline{6 \rightarrow 7 \leftarrow 6} \leftarrow \overline{5 \rightarrow 8}$ is a Bayes ball path from $1$ to $8$ given $\{4,7\}$. The treks and canyons along the path are overlined and underlined respectively.}
\label{fig:8nodes}
\end{figure}

\begin{lem}
\label{lem:simplePath}
If there is a Bayes ball path from $i$ to $j$ given $K$, then there is a simple one that is an alternating sequence of disjoint treks and canyons, starting and ending with treks.  
\end{lem}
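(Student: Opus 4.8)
The plan is to argue by an extremal choice. Since $i$ and $j$ are d-connected given $K$ by hypothesis, there is at least one Bayes ball path; among all such paths I would choose one, $P$, with the fewest edges and, among those, the fewest distinct nodes, and show that $P$ is already simple and of the claimed form. First note that the alternating trek/canyon structure comes essentially for free: as encoded in Definition~\ref{def:trekcanyon}, whether an internal node of a path is a collider (hence must lie in $K$) or not (hence must lie outside $K$) is determined entirely by the two incident edge directions — a node is a collider exactly when the path descends into it and then ascends out of it. Cutting $P$ at its colliders therefore expresses it as an alternating sequence of maximal collider-free treks and single colliders, beginning and ending with treks because $i$ and $j$ are endpoints and not colliders. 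Allowing a single collider to count as a (degenerate) canyon, the only thing left to prove is that $P$ has no repeated node except the reflected interior of a palindromic canyon, and that the maximal canyons are pairwise disjoint.

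Next I would run the standard shortcut surgery and control it with the local direction bookkeeping above. Suppose a node $v$ occurs at two positions; form the shorter path that deletes the intermediate loop and merges the two occurrences of $v$. The merged node inherits the incoming edge of the first occurrence and the outgoing edge of the second, so the Bayes ball condition at the merge is again governed only by these two directions. A short case check shows the merge is legal — and hence strictly shortens $P$, contradicting minimality — in every case except one: when $v \notin K$, the first occurrence is a straight descent $\to v \to$ and the second a straight ascent $\leftarrow v \leftarrow$, so that merging would manufacture a forbidden collider $\to v \leftarrow$ at a node outside $K$. In particular no node of $K$ can repeat (two collider occurrences, both of type $\to v \leftarrow$, always merge legally), so the bottom collider of each canyon is visited once. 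Thus in the minimal $P$ every repeated node lies outside $K$ and occurs once descending and once ascending.

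I would then show each surviving repetition packages into a genuine canyon. Between a descending occurrence of $v$ and the next ascending occurrence lies a sub-loop from $v$ back to $v$ that leaves $v$ going down and returns to $v$ coming up; because a down-to-up turn is only allowed at a collider, this loop bottoms out at a collider $c \in K$. If the descending branch and the ascending branch of the loop were not identical reflected paths, one could replace the longer branch by the reflection of the shorter one: this still touches the collider $c \in K$, keeps the walk a valid Bayes ball path, does not increase the edge count, and strictly decreases the number of distinct nodes, contradicting the choice of $P$. Hence the loop is palindromic, that is, exactly a canyon in the sense of Definition~\ref{def:trekcanyon}, and the repetition of $v$ occurs within it. A parallel exchange argument shows that two maximal canyons cannot share a node: a shared node would be a repeated node sitting in two palindromic contexts, and shortcutting between a descending occurrence in one and an ascending occurrence in the other (or simply merging) would again reduce $P$. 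Together these statements give that $P$ is simple and has the stated alternating form.

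I expect the main obstacle to be bookkeeping in the last two steps rather than any single idea. One must verify the direction case analysis for the shortcut across all combinations, and, more delicately, control the interaction of several colliders so that palindromizing one loop does not spoil another and so that the maximal canyons come out pairwise disjoint with their bottom colliders in $K$. Making the nesting of repeated occurrences precise — so that each descending occurrence is matched with a unique ascending one enclosing a single maximal canyon — is where the argument needs the most care; the trek portions, being collider-free, are comparatively easy to keep node-disjoint by the same shortcut.
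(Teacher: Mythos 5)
Your overall plan is the paper's surgery argument recast as an extremal argument, and the core of it is right: the case analysis of when merging two occurrences of a node is illegal (only when $v\notin K$, the earlier occurrence is a straight descent and the later one a straight ascent) is exactly correct, and so is the conclusion that nodes of $K$ never repeat on a minimal path. The genuine problems sit in the canyon-formation step, precisely where you yourself say "the argument needs the most care," and they are not mere bookkeeping. First, you assert that the loop between a descending and the matching ascending occurrence of $v$ "bottoms out at a collider $c\in K$" and consists of a descending branch and an ascending branch. In general it does not: a loop may oscillate, turning upward at several colliders of $K$ separated by downward turns at source nodes (configurations $\leftarrow s \to$, which are legal non-colliders outside $K$). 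For such a loop "the longer branch" and "the shorter branch" do not exhaust the loop, so your replacement is not even defined. This is repairable inside your framework (replace the whole loop by the canyon over the shorter of its two end-branches; when the loop oscillates this strictly decreases the edge count), but the argument has to be supplied.

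Second, and this is the step that fails as written: in the remaining case (one collider, branches of equal length) your contradiction rests on the claim that substituting the reflected descent branch for the ascent branch "strictly decreases the number of distinct nodes." That is unjustified, because a node of the ascent branch may also occur elsewhere on the path, in which case the substitution removes nothing from the path's support. By your own merge analysis such a second occurrence must be descending and earlier, so it lies either on the descent branch of the same loop (nested) or before the descending occurrence of $v$ (interleaved), and interleaved repetitions do occur in valid Bayes ball paths: in the DAG with edges $i\to b$, $b\to k_1$, $s_1\to k_1$, $s_1\to v$, $v\to k_2$, $b\to k_2$, $s_2\to b$, $s_2\to k_3$, $v\to k_3$, $j\to v$, the walk $i\,b\,k_1\,s_1\,v\,k_2\,b\,s_2\,k_3\,v\,j$ is a Bayes ball path from $i$ to $j$ given $K=\{k_1,k_2,k_3\}$ in which $b$ and $v$ repeat in the interleaved pattern and both merges fall into your forbidden case. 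So minimality does not silently exclude these configurations; they must be argued away. Closing this gap requires real additional content that the proposal does not contain: for the nested case, an acyclicity argument (two directed paths from $v$ to $c$ on the same vertex set must coincide in a DAG); for the interleaved case, either a third surgery (keep the prefix up to the descending $b$, traverse the segment from the descending $v$ to the ascending $b$ in reverse -- reversal preserves collider status -- and continue after the ascending $v$; this is valid and strictly shorter), or the paper's more robust move, which ignores the interior of the loop altogether and replaces the entire stretch between the first and last occurrence of the node by a canyon descending along the path to the first collider below it. Without one of these ingredients the minimality contradiction does not close.
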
  

\begin{proof}Suppose there is a repeated node $a$.  Then we can take the first edge into $a$ and the last edge out of $a$.  This is allowed except when  $a$ would become a collider on the new path and $a$ is not in the conditioned set $K$. In this case there must be a descendant of $a$ that is a collider, hence in $K$, so we can make a canyon between $a$ and this collider.  The same argument shows that we can make the maximal canyons to be pairwise disjoint and that the end nodes $i$ and $j$ are not in any canyons.

On the simple path, each connected component of the maximal canyons and their adjacent edges is a trek by definition, since it does not contain any colliders.  Note that a single collider is considered a canyon.  There must be at least one collider, hence a canyon, between any two such treks.  For every canyon, we may assume that the node at the top must have two arrows pointing into it on the path; otherwise we can replace the canyon with just the top of the canyon to get another simple Bayes Ball path. If there are two consecutive canyons, then the edge between them cannot have an arrowhead at both canyons, so we can shortcut at least one of them.  Thus we may assume that canyons do not occur next to each other, i.e.\ any two canyons are separated by a trek.
\end{proof}

For example, in Figure~\ref{fig:8nodes} the Bayes ball path $1\,4\,8\,7\,4\,3$ from $1$ to $3$ given $\{8\}$ has a repeated node $4$, and simply removing the path between the two occurrences of $4$ would give $1\,4\,3$, which is {\em not} a Bayes ball path given $\{8\}$ since the collider $4$ is not in the conditioned set $\{8\}$. However, $4$ has a descendant, $8$, which is a collider in the original Bayes Ball path, so we can create a canyon $4 \rightarrow 8 \leftarrow 4$ and take the path $1\,4\,8\,4\,3$ instead.  

\noindent{\bf Construction of a matroid from a simple Bayes ball path.}

Let $\alpha$ be a simple Bayes ball path from $i$ to $j$ given $K$ which is an alternating sequence of treks and canyons as in Lemma~\ref{lem:simplePath}.   
Suppose we have $d+1$ treks $t_1,\dots,t_{d+1}$ and $d$ canyons $c_1,\dots,c_{d}$, in the order $t_1 c_1 t_2 c_2 \cdots c_{d} t_{d+1}$.  For $k=1,\dots,d$, let $M_i$ be the rank $2$ uniform matroid on $\{t_k, c_k, t_{k+1}\}$, i.e.\ a subset is independent if and only if it has size $\leq 2$.  It can be represented by affine independence among three distinct points on an affine line or as linear independence among three non-parallel vectors in a $2$-dimensional vector space or as edges of a triangle.  

Let $TC_\alpha$ be the matroid on the set of treks and canyons $\{t_1,c_1,\dots,t_d,c_d,t_{d+1}\}$, defined as the {\em parallel connection} or (free and proper) {\em amalgam} of these $k$ matroids along the treks~\cite[\S7, \S11]{Oxley}.  The parallel connection of two graphic matroids is obtained by gluing two graphs along an edge, which corresponds to a trek in our case.  The parallel connection of two affine independence matroids is obtained by placing the affine spaces in a common ambient affine space in such a way that they only intersect at one point, which corresponds to a trek in our case.  The matroid $TC_\alpha$ is constructed by repeating this operation, which is clearly associative.

Finally the matroid $M_\alpha$ on the node set $[n]$ of the DAG, is defined as follows.  
Let $TC'_\alpha$ be the matroid $TC_\alpha$ with an additional loop element $\ell$.  Let  $f : [n] \rightarrow TC'_\alpha$ be a function that sends each element on the path $\alpha$ to the trek or canyon containing it and all other elements to the loop $\ell$.  We say that a subset $S \subseteq [n]$ is independent in the matroid $M_\alpha$ if $\{f(a) : a \in S\}$ is independent in $TC'_\alpha$.  In particular, elements in the same trek or the same canyon become parallel elements (two-element circuits). Two examples of such matroids for different Bayes ball paths are shown in Figure~\ref{fig:matroids}.

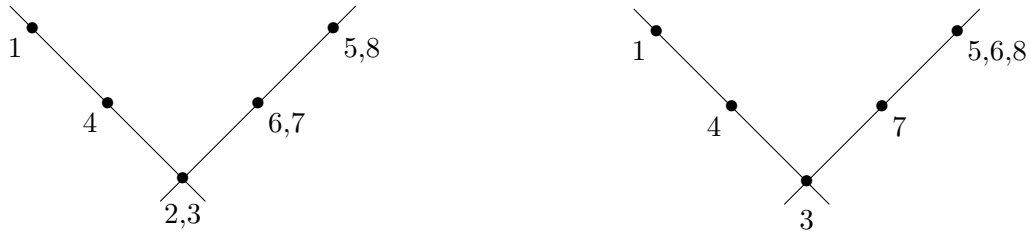
\begin{figure} [b]   
    \begin{minipage}{0.5\textwidth}
\begin{center}
	\begin{tikzpicture}
		\node[draw=none,fill=none] (1) at (0,0) {\textbullet};
		\node[draw=none,fill=none] (23) at (2,-2) {\textbullet};
		\node[draw=none,fill=none] (4) at (1,-1) {\textbullet};
		\node[draw=none,fill=none] (67) at (3,-1) {\textbullet};
		\node[draw=none,fill=none] (58) at (4,0) {\textbullet};
		
		\node[draw=none,fill=none] (1) at (0,0) [below left] {1};
		\node[draw=none,fill=none] (23) at (2,-2.5) {2,3};
		\node[draw=none,fill=none] (4) at (1,-1) [below left] {4};
		\node[draw=none,fill=none] (67) at (3,-1) [below right] {6,7};
		\node[draw=none,fill=none] (58) at (4,0) [below right] {5,8};
		
		\draw (-.3,.3) -- (2.3,-2.3);
		\draw (1.7,-2.3) -- (4.3,.3);
	\end{tikzpicture}
\end{center}
(a) The matroid corresponding to the Bayes ball path $\overline{1}\,\underline{4}\,\overline{3\,2}\,\underline{6\,7\,6}\,\overline{5\,8}$, which goes from $1$ to $8$ given $\{4,7\}$.\vspace{0.5cm}
        \end{minipage}
~\hspace*{0.9em}
        \begin{minipage}{0.45\textwidth}
\begin{center}
	\begin{tikzpicture}
		\node[draw=none,fill=none] (1) at (2,0) {\textbullet};
		\node[draw=none,fill=none] (3) at (4,-2) {\textbullet};
		\node[draw=none,fill=none] (4) at (3,-1) {\textbullet};
		\node[draw=none,fill=none] (7) at (5,-1) {\textbullet};
		\node[draw=none,fill=none] (568) at (6,0) {\textbullet};

		\node[draw=none,fill=none] (1) at (2,0) [below left] {1};
		\node[draw=none,fill=none] (3) at (4,-2.5) {3};
		\node[draw=none,fill=none] (4) at (3,-1) [below left] {4};
		\node[draw=none,fill=none] (7) at (5,-1) [below right] {7};
		\node[draw=none,fill=none] (568) at (6,0) [below right] {5,6,8};
		
		\draw (1.7,.3) -- (4.3,-2.3);
		\draw (3.7,-2.3) -- (6.3,.3);
	\end{tikzpicture}
\end{center}
(b) The matroid corresponding to the Bayes ball path $\overline{1}\,\underline{4}\,\overline{3}\,\underline{7}\,\overline{6\,5\,8}$, which goes from $1$ to $8$ given $\{4,7\}$.  The element $2$ is a loop in the matroid, i.e.\ $\{2\}$ is dependent.
        \end{minipage}
\caption{\label{fig:matroids} Two matroids that are compatible with the DAG in Figure~\ref{fig:8nodes}.} 
\end{figure}

A subset $S$ of a matroid is called a {\em flat} if $\rank(S \cup \{a\}) > \rank(S)$ for every $a \notin S$.  The intersection of two flats is a flat.  The {\em span} or the {\em closure} of a set is the smallest flat containing it.  More precisely
\[
\Span(S) = \{a : \rank(S \cup \{a\}) = \rank(S) \}.
\]  A subset $A \subseteq \{t_1,c_1,\dots,t_d,c_d,t_{d+1}\}$ is a flat in $TC_\alpha$ if and only if $A \cap \{t_k, c_k, t_{k+1}\}$ is a flat for each $k=1,\dots,d$~\cite[Proposition~11.4.13]{Oxley}.  This can also be checked directly from the realization of $TC_\alpha$ using affine/linear independence or graphs. Note that a subset of $\{t_k, c_k, t_{k+1}\}$ is a flat if and only if it has size $\neq 2$.  Flats of $M_\alpha$ are inverse images under $f$ of flats in $TC_\alpha'$.

It follows that a subset $S \subseteq M_\alpha$ is a flat if and only if it satisfies all of the following conditions:
\begin{enumerate}
\item[(F0)] $S$ contains all the loops (the nodes that are not on $\alpha$)
\item[(F1)] If an element of a trek or a canyon is in $S$, then all the other nodes in the same trek or canyon are also in $S$.
\item[(F2)] For each $k=1,\dots,d$, if $S$ intersects (thus contains) two out of three treks/canyons in $\{t_k,c_k,t_{k+1}\}$, then it also intersects (thus contains) the third.
\end{enumerate}
\qed

Recall from  \S\ref{sec:background} that the rank function of a matroid gives a collection of conditional dependence relations of the form $a \notindependent b \mid C$, where $a,b \in [n]$ and $C \subseteq [n]\setminus\{a,b\}$ satisfy the condition~\eqref{eqn:matroid}, namely
\begin{equation}
\rank(C) + 1 = \rank(Ca) = \rank(Cab) = \rank(Cb)
\label{eqn:matroid2}
\end{equation}

\begin{lem}
\label{lem:BBP}
Let $\mathcal{G}$ be a DAG and let $\alpha$ be a simple Bayes ball path from $i$ to $j$ given $K$ in $\mathcal{G}$.  Then the conditional dependence relations of the matroid $M_\alpha$ form a subset of the set of conditional dependence relations defined by the semigraphoid corresponding to $\mathcal{G}$.
\end{lem}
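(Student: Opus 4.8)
The plan is to show that each dependence relation of $M_\alpha$ is witnessed by a sub-path of $\alpha$ which, after adjusting how deep it descends into each canyon, is a Bayes ball path in $\mathcal{G}$ for that same relation. I would start from the matroid-theoretic reading of~\eqref{eqn:matroid2}: writing $\Span$ for matroid closure, $a\notindependent b\mid C$ holds in $M_\alpha$ precisely when $a,b\notin\Span(C)$ while $b\in\Span(C\cup\{a\})$ and $a\in\Span(C\cup\{b\})$; equivalently, there is a circuit $D$ of $M_\alpha$ with $a,b\in D$ and $D\setminus\{a,b\}\subseteq C$, together with $a,b\notin\Span(C)$. Thus the first task is to list the circuits of $M_\alpha$. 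Since $M_\alpha$ is pulled back from $TC'_\alpha$ along $f$, every circuit is either a pair of nodes lying in one common trek or canyon, or the preimage---one node per trek or canyon---of a circuit of $TC_\alpha$. To determine the latter I would use the graphic realization of $TC_\alpha$: the parallel connection of graphic matroids glues the graphs along the shared edge~\cite{Oxley}, so drawing each rank-two matroid on $\{t_k,c_k,t_{k+1}\}$ as a triangle and gluing successively along the treks produces a \emph{fan}---a central vertex joined by ``spokes'' $t_1,\dots,t_{d+1}$ to the vertices of a path whose edges are the canyons $c_1,\dots,c_d$. The cycles of a fan are exactly the sets $\{t_p,c_p,c_{p+1},\dots,c_{q-1},t_q\}$ with $p<q$: two spokes together with all the canyons between them, and no trek in between.

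Next I would translate a relation $a\notindependent b\mid C$ into combinatorics of $\alpha$, writing $X=f(a)$ and $Y=f(b)$. If $X=Y$, then $a$ and $b$ lie in one common trek or canyon, the only constraint is that $C$ meets neither, and the monotone sub-path of $\alpha$ between $a$ and $b$ inside that trek or canyon has no interior collider and avoids $C$, so it is already the required Bayes ball path. If $X\neq Y$, a witnessing circuit $D$ exhibits $X$ and $Y$ as two treks-or-canyons of one fan cycle $\{t_p,c_p,\dots,c_{q-1},t_q\}$ and shows that $C$ meets every \emph{other} element of this cycle; in particular $C$ meets every canyon lying strictly between $X$ and $Y$. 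Such a cycle has only its two end treks, so the treks of $\alpha$ lying positionally between $X$ and $Y$ are not elements of $D$; their conditioning is governed instead by the requirement $a,b\notin\Span(C)$. Indeed, in the fan a conditioning node inside a trek strictly between $X$ and $Y$---or inside $X$ or $Y$ themselves---would join the central vertex to this arc (the intervening canyons already lying in $C$) and hence force $a$ or $b$ into $\Span(C)$, a contradiction. Therefore $C$ avoids every trek strictly between $X$ and $Y$ as well as the two end pieces $X$ and $Y$.

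Finally I would build the path. Starting from the occurrence of $a$ and heading toward $b$ along the sub-arc of $\alpha$ between them, I cross each intermediate trek unchanged; in each intermediate canyon I descend only as far as the shallowest node lying in $C$, which thereby becomes a collider; and if $a$ or $b$ itself lies in a canyon I first ascend to that canyon's top. The colliders of the resulting path are then exactly the chosen canyon nodes, each in $C$, while every other node is a non-collider that avoids $C$---the traversed trek nodes because $C$ avoids those treks, the canyon nodes strictly above a chosen one by the shallowest choice, and the nodes of the endpoint canyons because $C$ avoids them. The one delicate point is the behaviour at a canyon top, and this is exactly where I would invoke the fact established inside the proof of Lemma~\ref{lem:simplePath}, that in a simple Bayes ball path the two path-edges at the top of each canyon point inward: this guarantees that when the conditioning node of a canyon is its top the top legitimately serves as a collider, and that otherwise the top is a harmless non-collider joining a trek to the descent. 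The main obstacle is precisely this canyon-top analysis, together with the routine but necessary case split according to whether $a$ and $b$ lie in treks or in canyons; granting these, the collider bookkeeping along the arc is immediate.
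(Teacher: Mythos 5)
Your proposal is correct, and it reaches the paper's two key combinatorial facts --- that $C$ must meet every canyon lying strictly between $a$ and $b$ along $\alpha$, and must avoid the treks strictly between them as well as the trek/canyon containing $a$ and the one containing $b$ --- by a genuinely different mechanism. The paper works with flats and closures: it uses the flat axioms (F0)--(F2) of $M_\alpha$ and an incremental computation of $\Span(Ca)$ by adding nodes along $\alpha$ starting at $a$, arguing that if $C$ hit an intervening trek or missed an intervening canyon the computation would terminate before reaching $b$, contradicting $\Span(Ca)=\Span(Cb)$ in~\eqref{eqn:matroid3}. You instead characterize the dependence relation by circuits ($a\notindependent b\mid C$ iff $a,b\notin\Span(C)$ and some circuit $D\ni a,b$ has $D\setminus\{a,b\}\subseteq C$) and then classify the circuits of $TC_\alpha$ via its graphic realization as a fan graph, obtaining exactly the sets consisting of two treks and all canyons between them; the two facts then fall out of the circuit structure plus a connectivity argument in the fan graph. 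Both routes are sound; the circuit route makes the ``two end treks, all intervening canyons'' pattern explicit, at the price of needing the (slightly fussy, as you acknowledge) subcase where $f(a)$ or $f(b)$ is a canyon, in which the connectivity argument must also use the circuit elements on the far side of that canyon. Your final step is actually \emph{more} careful than the paper's: the paper simply asserts that the subpath of $\alpha$ between $a$ and $b$ is a Bayes ball path given $C$, which is not literally true, since $C$ is only known to meet each intervening canyon somewhere, not necessarily at its bottom collider; your truncation of each canyon at the shallowest node of $C$, together with the inward-arrowhead property of canyon tops from the proof of Lemma~\ref{lem:simplePath} and the handling of endpoints lying inside canyons, supplies exactly the repair this assertion needs.
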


\begin{proof}
Suppose the relation $a \notindependent b \mid C$ comes from the matroid $M_\alpha$.  We wish to show that there is a Bayes ball path in $\mathcal{G}$ between $a$ and $b$ given $C$.
  The condition~(\ref{eqn:matroid2}) can be translated as
\begin{equation}
\label{eqn:matroid3}
\Span(C) \subsetneq \Span(Ca) = \Span(Cab) = \Span(Cb).
\end{equation}


Let us first consider the case when $a$ and $b$ are in the same trek or in the same canyon.  Then $C$ cannot contain any element from the same trek/canyon; otherwise both $a$ and $b$ would be in $\Span(C)$, contradicting~\eqref{eqn:matroid3}.  Any two nodes in the same trek or the same canyon are connected by a Bayes ball path if no node is conditioned.  Thus there is a Bayes ball path between $a$ and $b$ given $C$, along $\alpha$. 

Now suppose that $a$ and $b$ are in different treks/canyons.  Then condition \eqref{eqn:matroid3} implies that $\Span(C)$, hence $C$, cannot contain any element in the treks/canyons containing $a$ or~$b$.   

We claim that $C$ does not intersect any trek that lies strictly between $a$ and $b$ along $\alpha$.  
Otherwise, if we compute $\Span(Ca)$ by adding to $\Span(C)$ nodes along the path starting at $a$, then the process would terminate (i.e.\ the conditions (F0),(F1),(F2) would be satisfied) at or before the trek that intersects $C$, before it reaches $b$. Thus $b \notin \Span(Ca)$, contradicting the condition $\Span(Ca)=\Span(Cb)$ in \eqref{eqn:matroid3}.  

Next we claim that $C$ intersects every canyon that lies strictly between $a$ and $b$ along $\alpha$.  Suppose $C$ does not intersect a canyon.  But we have already shown that $C$ does not intersect the next trek (which may contain $b$) after the canyon, on the path from $a$ to $b$ along $\alpha$.   Then as before, the computation of $\Span(Ca)$ stops before it reaches $b$; so again $b \notin \Span(Ca)$.

Putting everything together we conclude that $C$ intersects all the canyons and none of the treks that lie strictly between $a$ and $b$ along $\alpha$.  This means that the subpath between $a$ and $b$ along $\alpha$ forms a Bayes ball path given $C$ in $\mathcal{G}$, and the relation $a \notindependent b \mid C$ is valid for the semigraphoid of $\mathcal{G}$.
\end{proof}

A proof of our main result now follows.
\begin{proof}[Proof of Theorem~\ref{thm:main}]
For every conditional dependence relation $i \notindependent j \mid K$ in the semigraphoid corresponding to a DAG $\mathcal{G}$, we find a simple Bayes ball path $\alpha$ from $i$ to $j$ given $K$ and construct a matroid $M_\alpha$.  By Lemma~\ref{lem:BBP}, all the conditional dependence relations coming from $M_\alpha$ are among those in the semigraphoid corresponding to $\mathcal{G}$.  By taking all the matroids of the form $M_\alpha$ where $\alpha$ runs over all simple Bayes ball paths in $\mathcal{G}$, we obtain exactly all the conditional dependence relations that are valid in the semigraphoid of $\mathcal{G}$.  Taking the union of all these dependence relations translates into taking the Minkowski sum of all the corresponding matroid polytopes. Hence, DAG associahedra are obtained as Minkowski sums of matroid polytopes. 
\end{proof}

\begin{example}[Example~\ref{ex:4nodes} continued]
\label{ex:4nodesAgain}
Consider the DAG with 4 nodes from Example~\ref{ex:4nodes}.  The path $\overline{1}\,\underline{3\,4\,3}\,\overline{2}$ is a Bayes ball path from $1$ to $2$ given $\{4\}$.  The corresponding rank $2$ matroid is realized by affine dependence among $3$ distinct points in $\RR$.  For example, we can take the matroid on the points $v_1 = 1, v_2 = 2, v_3=v_4=3$ on the real line $\RR$ under affine independence.  Equivalently, the matroid is given by columns of the matrix $\begin{pmatrix} 1&1&1&1\\1&2&3&3 \end{pmatrix}$ under linear independence.  The bases are all pairs $\{v_i,v_j\}$ where $i\neq j$, except the pair $\{v_3,v_4\}$.  The matroid polytope is a square-based pyramid, with $e_1+e_2$ as the top of the pyramid.\qed
\end{example}

\section{Generalizing to mixed graphs}
\label{sec:mixed}

The MSMP construction generalizes to a much more general setting of semigraphoids arising from loopless mixed graphs (LMG) introduced by Sadeghi and Lauritzen in~\cite{SadeghiLauritzen}.  We first recall the definitions.  A {\em mixed graph} is a graph with three possible types of edges: undirected ($i$\----$j$), directed ($i \longleftarrow j$ or $i \longrightarrow j$), or bidirected ($i \longleftrightarrow j$), which are also called {\em lines}, {\em arrows}, and  {\em arcs} respectively.  Multiple types of edges are allowed between any two nodes. A loopless mixed graph is a mixed graph without a loop, or an edge between a node and itself.

A node $j$ is called an {\em ancestor} of a node $i$, and $i$ is called a descendant of $j$, if there is a path $i=i_0,i_1,\dots,i_n=j$ from $i$ to $j$ in which the edges $(i_k, i_{k+1})$ are arrows (directed edges) pointing from $i_k$ to $i_{k+1}$ for all $k=0,\dots,n-1$.  Note that undirected and bidirected edges are not used in the definition of ancestors.  The set of ancestors of a node $i$ is denoted by $\an(i)$.  For any set of nodes $K$, let $\an(K) = \displaystyle\bigcup_{k \in K}\an(k)$.

For the path $i j k$ (where we may have $i=k$) the node $j$ is called a {\em collider} if the path is one of $i \longrightarrow j \longleftarrow k$, $i \longleftrightarrow j \longleftarrow k$, or $i \longrightarrow j \longleftrightarrow k$.  Otherwise $k$ is a noncollider.

Let $K$ be a subset of the node set of an LMG.  A path is called a {\em Bayes ball path given $K$} (called an {\em m-connecting path given $K$} in~\cite{SadeghiLauritzen}) if all its collider nodes are in $K \cup \an(K)$ and all its noncollider nodes are outside $K$.  We say that $i \not\independent j \mid K$ if there exists a Bayes ball path from $i$ to $j$ given $K$.  This collection of CI relations forms a graphoid~\cite{SadeghiLauritzen}.

We define {\em treks}, {\em canyons}, and {\em simple} Bayes ball paths in an LMG in exactly the same way as in Definition~\ref{def:trekcanyon}.  Only directed edges (arrows) can appear in a canyon, but all three types of edges are allowed on a trek.  A trek or a canyon may consist of only one node, but it may not be empty.

\begin{lem}
\label{lem:simpleLMG}
Let $i,j$ be nodes in an LMG and $K$ be a set of nodes such that $\{i,j\} \cap K = \varnothing$.  If there is a Bayes ball path from $i$ to $j$ given $K$, then there is a simple one that is a sequence of treks and canyons, starting and ending with treks, such that 
\begin{enumerate}
\item between any two treks there is at least one canyon
\item on the edge between a trek and a canyon, there must be an arrowhead at the canyon, and
\item two consecutive canyons can only be connected by a bidirected edge.
\end{enumerate}
\end{lem}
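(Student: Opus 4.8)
The plan is to mimic the proof of Lemma~\ref{lem:simplePath} for DAGs, working through repeated nodes first and then analyzing the structure forced on the resulting path, but paying careful attention to the three edge types and the generalized collider/ancestor condition in an LMG. The key new features are that colliders now include the bidirected patterns $i \longleftrightarrow j \longleftarrow k$ and $i \longrightarrow j \longleftrightarrow k$, that a collider need only lie in $K \cup \an(K)$ rather than in $K$ itself, and that canyons by definition use only arrows. I would therefore track separately \emph{which} edge type sits on each side of every collider, since this is exactly what distinguishes the three conclusions.

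First I would eliminate repeated nodes exactly as in Lemma~\ref{lem:simplePath}. Given a repeated node $a$, I take the first edge into $a$ and the last edge out of $a$; the only obstruction is when $a$ thereby becomes a collider on the shortcut path while $a \notin K \cup \an(K)$. In the DAG case one resolves this by routing through a descendant collider; here the analogue is that a collider must lie in $K \cup \an(K)$, so there is a descendant of $a$ lying in $K$ (by definition of $\an$), reachable by arrows, and I form a canyon of arrows from $a$ down to that node and back. This is where the restriction that \emph{only arrows appear in a canyon} is used: the ancestor relation in an LMG is defined purely through arrows, so the downward-and-back detour is automatically a legitimate arrow canyon. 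The same argument makes the maximal canyons pairwise disjoint and keeps $i,j$ out of every canyon, since $\{i,j\}\cap K=\varnothing$.

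Next I would establish the alternating trek/canyon decomposition and conditions (1)--(3). For (1), as in the DAG proof, any two treks are separated by at least one collider, hence at least one canyon, because a trek by definition contains no collider. For (2), consider the single edge joining a trek to an adjacent canyon at its top node $b$: since the top of the canyon has two arrowheads pointing in along the two canyon edges (both arrows point toward the collider), the node $b$ is a collider only if the trek edge also carries an arrowhead at $b$; but if that edge had a tail at $b$, then by the minimality argument of Lemma~\ref{lem:simplePath} we could replace the canyon by just its top node and shorten the path, contradicting simplicity. Hence there must be an arrowhead at the canyon on that edge. For (3), suppose two canyons are adjacent, joined by a single edge between their top nodes; by (2)-type reasoning each top already receives an arrowhead from its own canyon, so if the connecting edge were a \emph{directed} arrow it would have a tail at one of the two tops, and then that endpoint would be a noncollider there, allowing a shortcut that removes one canyon --- contradicting simplicity. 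A bidirected arc, by contrast, places arrowheads at \emph{both} ends, so both top nodes remain colliders and the configuration cannot be shortened. Thus two consecutive canyons are forced to be joined by a bidirected edge.

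The main obstacle I anticipate is the bookkeeping in the shortcut arguments for (2) and (3): in the DAG setting every edge orientation is determined by a single arrow type, whereas here I must verify for each of the three admissible collider patterns that the endpoint in question genuinely fails to be a collider (and hence violates the Bayes ball condition) before claiming a shortcut exists. I would handle this by reducing each case to the one invariant that actually matters --- the presence or absence of an arrowhead at the shared node --- and checking that the collider definition in an LMG depends only on the arrowheads abutting the node, not on the global edge types. Once that reduction is in place, conditions (2) and (3) follow from the same ``otherwise shorten the path'' principle used for DAGs.
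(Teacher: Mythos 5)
Your overall strategy is the paper's: obtain a simple path via the repeated-node elimination of Lemma~\ref{lem:simplePath}, get (1) from the fact that the treks are the connected components of the complement of the maximal canyons, and get (2) and (3) by ``cutting canyons short,'' plus the observation that an edge with arrowheads at both endpoints must be bidirected. For (2) and (3) your shortcut argument is essentially identical to the paper's, though one detail is stated backwards: at the \emph{top} of a canyon the two canyon edges carry tails (the arrowheads sit at the collider in the middle), so the top is a noncollider on the original path and hence lies outside $K$; that fact, not ``two arrowheads pointing in,'' is what makes the shortened path a Bayes ball path when the adjoining edge has no arrowhead at the top.

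The genuine gap is in your first step, and it is exactly the point where the LMG case differs from the DAG case. In the obstruction case you consider --- the shortcut would make the repeated node $a$ a collider while $a \notin K \cup \an(K)$ --- you assert that ``there is a descendant of $a$ lying in $K$ (by definition of $\an$)'' and hang a canyon from $a$ down to it. That is backwards: $a \notin \an(K)$ says precisely that no descendant of $a$ lies in $K$, and by transitivity of the ancestor relation no descendant of $a$ lies in $\an(K)$ either; consequently \emph{any} canyon below $a$ would have its middle collider outside $K \cup \an(K)$ and would violate the Bayes ball condition. So your repair is unavailable exactly when it is invoked. (Note also that the case split differs from the DAG setting: since LMG colliders need only lie in $K \cup \an(K)$, when $a \in \an(K)$ the plain shortcut is already legal and no canyon is needed; the canyon device is forced only for DAGs, where colliders must lie in $K$ itself.) Moreover, the DAG argument that excludes the bad case --- the walk must leave $a$ along a directed edge pointing out of $a$, descend, and reach a collider that is both a descendant of $a$ and in $K$, forcing $a \in \an(K)$ --- does not transfer: in an LMG the walk can leave $a$ along an \emph{undirected} edge, whose far endpoint need not be a descendant of $a$, and acyclicity is not available. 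Concretely, take edges $i \rightarrow a$, $j \rightarrow a$, and an undirected $3$-cycle $a - v$, $v - w$, $w - a$: then $i \rightarrow a - v - w - a \leftarrow j$ is a Bayes ball path given $K = \varnothing$, but at $a$ neither the shortcut nor any canyon is admissible, so your procedure stalls. Closing this hole requires an idea that your proposal does not supply (and that the paper's one-line appeal to Lemma~\ref{lem:simplePath} leaves implicit).
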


\begin{proof}

The existence of a simple path follows from the same argument as in the proof of Lemma~\ref{lem:simplePath}.   On the simple path, each connected component of the complement of maximal canyons (and adjacent edges) is a trek, since it does not contain any collider.  Note that a single collider is considered a canyon.  
The property (1) follows from the construction of treks as connected components.  Consider an edge connecting a trek and a canyon.  If there is no arrowhead at the canyon, then the top of the canyon does not have two arrowheads pointing into it.  We can then cut the canyon short, replacing the canyon with only the top of it, to get another simple Bayes ball path. Thus property (2) is satisfied.   An analogous argument shows that an edge between two canyons must have arrowheads at both canyons; otherwise we can cut the canyons short.  Thus property (3) is also satisfied.
\end{proof}

Now we can describe a generalization of the matroid construction from the previous section. 

\noindent{\bf Construction of a matroid from a Bayes ball path in an LMG.}
Let $\alpha$ be a simple Bayes ball path from $i$ to $j$ given $K$ satisfying the conditions from Lemma~\ref{lem:simpleLMG}.  
Suppose there are $d+1$ treks $t_1,\dots,t_{d+1}$ on $\alpha$, in this order.  For $k=1,\dots,d$, let $m_k$ denote the number of canyons between $t_k$ and $t_{k+1}$.
For each subpath $t_k c_{k,1} \cdots c_{k,m_k} t_{k+1}$ of two treks separated by canyons, consider the uniform matroid $U_{m_k+1,m_k+2}$ on $\{ t_k, c_{k,1}, \dots ,c_{k,m_k}, t_{k+1}\}$, which can be represented by affine independence among $m_k+2$ general points in $\RR^{m_k}$.  We then take the parallel connection of these uniform matroids along the treks.  In other words, we place the affine spaces, one for each pair of treks separated by a sequence of canyons, in a common ambient space so that any two consecutive ones only meet at one point and they affinely span maximum possible dimension. As before, a subset is a flat if and only if its intersection with each of the uniform matroids is also a flat.  

The matroid $M_\alpha$ on the node set $[n]$ of the LMG is defined as before by replacing each trek (resp. canyon) with parallel elements corresponding to the nodes in the trek (resp. canyon) and considering nodes not on $\alpha$ as loops.

A subset $S$ of the node set $[n]$ is a flat in $M_\alpha$ if and only if it satisfies (F0), (F1), and 
\begin{enumerate}
\item[(F$2'$)] For each $k = 1,\dots, d$, if S intersects (thus contains) $m_k+1$ out of $m_k+2$ treks/canyons in
$\{t_k, c_{k,1}, \dots, c_{k,m_k}, t_{k+1} \}$, then it also intersects (thus contains) the remaining one.
\end{enumerate}
\qed

For example, for the Bayes ball path $t_1 \longleftrightarrow c_{1,1} \longleftrightarrow c_{1,2} \longleftarrow t_2 \longleftrightarrow c_{2,1} \longleftrightarrow c_{2,2} \longleftarrow t_3$, we can take the following representation via affine independence in $\RR^4$:
\[
\begin{array}{c|ccccccc}
\text{trek or canyon} &t_1&c_{1,1}&c_{1,2}&t_2&c_{2,1}&c_{2,2}&t_3\\
\hline
&0&1&2&3&3&3&3\\
\text{representation}
&0&1&4&9&9&9&9\\
\text{as points in $\RR^4$}&0&0&0&0&1&2&3\\
&0&0&0&0&1&4&9\\
\end{array}
\]
We have a rank $3$ uniform matroid on the first four elements and another rank $3$ uniform matroid on the last four elements, meeting at a point $t_2$.

\begin{thm}[Generalization of Main Theorem]
\label{thm:mixedMSMP}
The semigraphoid of any loopless mixed graph is submodular.  The associated coarsening of the $S_n$-fan is the normal fan of the Minkowski sum of matroid polytopes corresponding to simple Bayes ball paths satisfying the conditions in Lemma~\ref{lem:simpleLMG}.
\end{thm}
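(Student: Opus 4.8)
The plan is to run the proof of Theorem~\ref{thm:main} essentially verbatim, replacing each ingredient by its loopless-mixed-graph counterpart. For every conditional dependence relation $i \notindependent j \mid K$ in the semigraphoid of the LMG there is, by definition, a Bayes ball path from $i$ to $j$ given $K$, and by Lemma~\ref{lem:simpleLMG} we may take it to be a simple path $\alpha$ of the prescribed alternating shape, to which we attach the matroid $M_\alpha$ built above from the uniform matroids $U_{m_k+1,m_k+2}$ glued along treks and whose flats are governed by (F0), (F1), (F$2'$). The crux is to prove the LMG analogue of Lemma~\ref{lem:BBP}: every dependence relation $a \notindependent b \mid C$ coming from $M_\alpha$ is valid for the LMG semigraphoid. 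Granting this, and the converse observation that every LMG relation $i \notindependent j \mid K$ is itself produced by the $M_\alpha$ attached to a witnessing path, the union over all such $\alpha$ of the dependence relations of the $M_\alpha$ recovers exactly the LMG dependence relations. Since union of dependence relations corresponds to Minkowski sum of matroid polytopes (equivalently, to sum of submodular rank functions), the associated coarsening is the normal fan of the stated Minkowski sum; its polytopality then yields submodularity by Lemma~\ref{lem:submodular}, proving the first sentence.

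For the heart of the matter I would carry out the same span-propagation analysis as in Lemma~\ref{lem:BBP}. Translating the rank condition~\eqref{eqn:matroid2} into the closure form~\eqref{eqn:matroid3}, one first observes that $a,b \notin \Span(C)$ forces $C$ to avoid the treks/canyons of $a$ and $b$. The genuinely new bookkeeping is across a block $t_k\,c_{k,1}\cdots c_{k,m_k}\,t_{k+1}$ now carrying a copy of $U_{m_k+1,m_k+2}$: condition (F$2'$) shows that the closure can only advance from $t_k$ to $t_{k+1}$ after $C$ has supplied all $m_k$ canyons of that block, and that if $C$ meets an intermediate trek while leaving some canyon out then $\Span(Ca)$ and $\Span(Cb)$ separate. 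Combining this with the requirement $\Span(Ca)=\Span(Cb)$ forces $C$ to meet every canyon strictly between $a$ and $b$ along $\alpha$ and to avoid every trek strictly between them---precisely the incidence pattern that should make the subpath of $\alpha$ between $a$ and $b$ a Bayes ball path given $C$.

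The two new obstacles, both absent in the DAG setting, are the weaker collider condition (colliders need only lie in $K\cup\an(K)$ rather than in $K$) and the bidirected edges joining consecutive canyons. I expect the case analysis reconciling these with the node-based matroid to be the main technical burden. For the backward inclusion I would use a rerouting argument on each canyon met by $C$: letting $u$ be the \emph{highest} canyon node lying in $C$, one descends only to $u$ and backtracks, turning $u$ into the collider, so that $u\in C\subseteq C\cup\an(C)$ while every remaining noncollider lies strictly above $u$ and hence outside $C$ (the case where $u$ is the top is handled by condition (2) of Lemma~\ref{lem:simpleLMG}, which supplies two arrowheads into the top). For the forward inclusion, the flexibility $\an(K)$ is absorbed by choosing $\alpha$ so that each canyon is extended downward along a directed path from its collider into $K$, making its parallel class genuinely met by $K$. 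Finally, conditions (2) and (3) of Lemma~\ref{lem:simpleLMG} pin down the arrowhead pattern at canyon boundaries---an arrowhead at each canyon along the edge to a neighbouring trek, and a bidirected edge between consecutive canyons---which is exactly what is needed so that, after these local adjustments, every collider of the reconstructed subpath lands in $C\cup\an(C)$ and every noncollider stays outside $C$.
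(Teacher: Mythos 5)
Your proposal is correct and takes essentially the same approach as the paper: the paper's entire proof of Theorem~\ref{thm:mixedMSMP} consists of the remark that the proofs of Lemma~\ref{lem:BBP} and Theorem~\ref{thm:main} can be repeated word for word, with ``DAG'' replaced by ``LMG'' and condition (F2) replaced by (F$2'$). Your extra care with the two genuinely new phenomena---rerouting each canyon to bounce at its highest node lying in $C$ so that the new collider lands in $C$, and extending canyons downward into $K$ to absorb the weaker collider condition $K \cup \an(K)$---spells out details that the paper's word-for-word claim leaves implicit.
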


\begin{proof}
The result and the proof of Lemma~\ref{lem:BBP} and the Proof of Theorem~\ref{thm:main} in \S\ref{sec:MSMP} can be repeated word for word, with the word DAG replaced by LMG and the condition (F2) replaced by (F$2'$).
\end{proof}

\section{Relationship among families of semigraphoids}

Recall from Lemma~\ref{lem:submodular} that 
submodular functions on $2^{[n]}$ correspond to polytopal coarsenings of the $S_n$ fan, and these are exactly the normal fans of generalized permutohedra.

Graph associahedra and DAG associahedra are special classes of generalized permutohedra that are defined up to equivalence of normal fans. The former can be realized as Minkowski sums of standard simplices and the latter can be realized as Minkowski sums of matroid polytopes (MSMP). What additional classes of generalized permutohedra can be realized in this way?  
Since the standard simplices are matroid polytopes, MSS polytopes are also MSMP.

Unfortunately, this question seems difficult to answer in general. 
For $n=3,4,5$ respectively, the cone of submodular functions has 5, 37, and 117978 extreme rays of which only 5, 23, and 149, respectively correspond to (connected) matroid polytopes. It suffices to consider connected matroids because the direct sum of matroids corresponds to the Minkowski sum of the corresponding matroid polytopes. Thus the matroid polytope of a disconnected matroid, which is the direct sum of nontrivial matroids, is the Minkowski sum of the matroid polytopes of these direct summands. Although the structure of these extreme rays is unclear, it seems unlikely due to their sparsity that many submodular semigraphoids will arise in this way.

Another interesting class of semigraphoids are gaussoids \cite{Matus_Gaussoids}, an abstraction of regular Gaussian distributions in the language of CI relations; see \S\ref{sec:background}. Since we have seen that probabilistic graphical models can be faithfully realized by regular Gaussian distributions, another natural question is whether  all regular Gaussian models (also called representable gaussoids) or even all gaussoids are MSMP.  Our interest in gaussoids stems from Theorem~\ref{edge_thm}, where gaussoids are natural.  

Gaussoids appear to be incompatible with the MSMP construction. We have computationally verified that for $3 \leq n \leq 8$  no submodular semigraphoid corresponding to a connected matroid on $[n]$ is a gaussoid. Thus, none of the extreme matroidal rays of the submodular cone are gaussoids. 

Conversely, not all gaussoids, in fact not even all representable gaussoids, can be obtained via MSMP. For example, \cite[table~A.1]{Drton_Xiao} lists all Gaussian CI models on four variables (up to equivalence) and examples 19, 20, 34, 50, 51 are not MSMP. On the other hand, the CI relations corresponding to graphical models in this list all correspond to generalized permutohedra arising as MSMP. 

\begin{figure}[!t]
\centering
\includegraphics[scale=0.4]{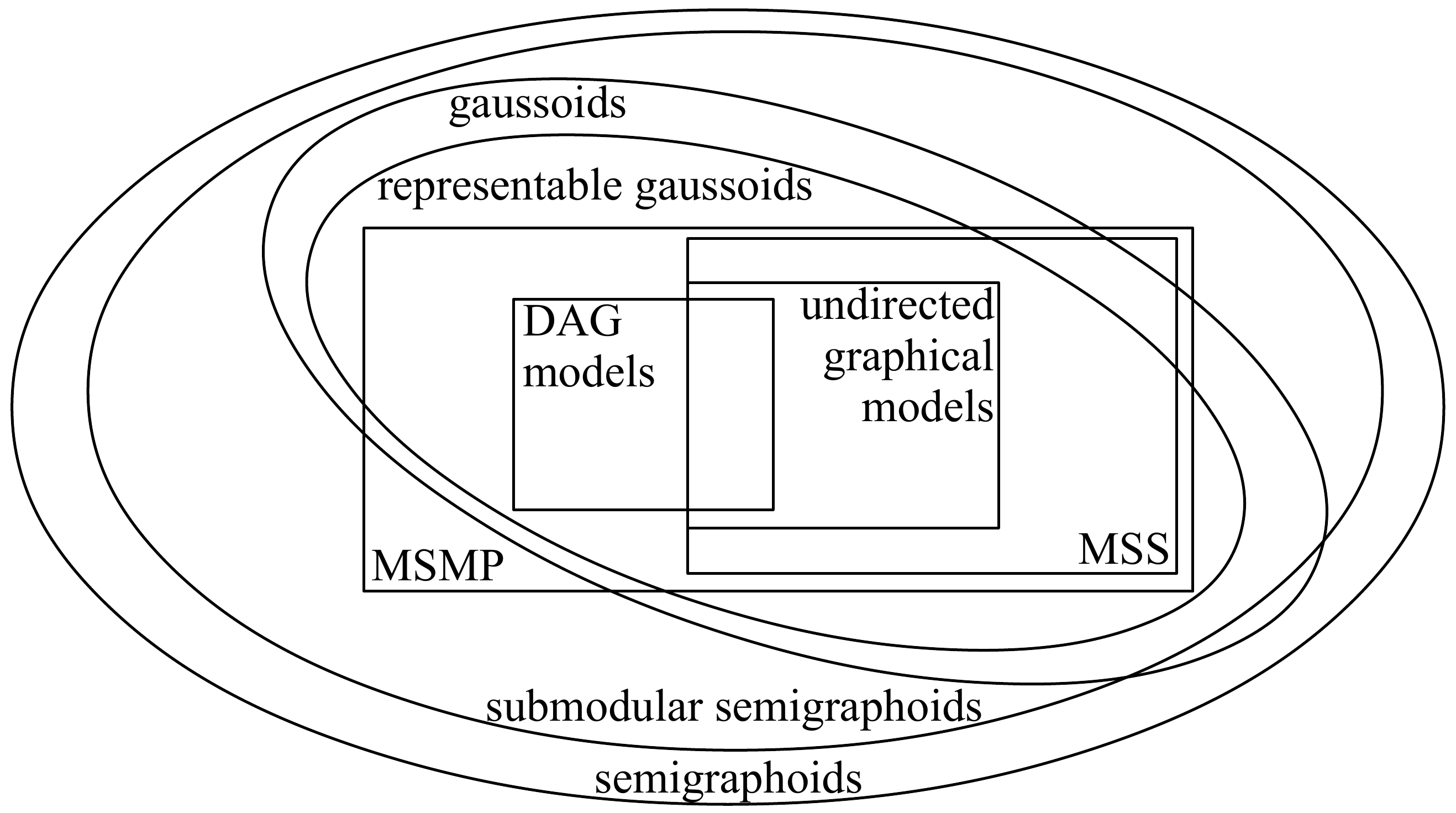}
\caption{\label{inclusions}  Venn diagram representing the relationship of all the different coarsenings of the $S_n$ fan discussed in this paper.}  
\end{figure}

In Figure~\ref{inclusions} we illustrate the relationship of all the different coarsenings of the $S_n$ fan discussed in this paper by a Venn diagram. We have seen that undirected graphical models give rise to MSSs, while DAG models can be realized by MSMPs. In Proposition~\ref{prop:MSS-Vstructs} we showed that a DAG model is MSS if and only if it coincides with an undirected graphical model, i.e.~if and if it is a decomposable model. As we have discussed above, gaussoids are incompatible with the MSMP construction. In fact, gaussoids are also incompatible with the MSS construction. For example, it is easy to check that the standard simplex in Figure~\ref{fig:simplices}(a) is not a gaussoid. While every representable gaussoid is a submodular gaussoid as shown in Lemma~\ref{lem_Gaussian_multiinf}, this is not the case for gaussoids. The semigraphoid studied in~\cite[Section~3]{HMSSW} is a gaussoid that is not submodular.

\section{Causal inference}
\label{sec:causalInference}

%
%
%

In this section, we describe how DAG associahedra can be used to perform causal inference. The main problem in causal inference is the following:  We obtain data from an unobserved DAG $\mathcal{G}$. From this data we infer a set of CI relations $\mathcal{C}$. Under the faithfulness assumption, which we will assume throughout this section, $\mathcal{C}$ coincides with the gaussoid of $\mathcal{G}$.  The goal is to learn $\mathcal{G}$ from $\mathcal{C}$. This problem is ill defined since d-separation does not uniquely identify a DAG. So instead the problem is to learn $\mathcal{G}$ up to Markov equivalence, or in other words, to learn from $\mathcal{C}$ the {\em essential graph}, which is a partially directed graph with the same skeleton as $\mathcal{G}$ where an edge is directed if and only if it is directed the same way in every DAG in the Markov equivalence class.

A popular algorithm for learning the Markov equivalence class of a DAG is Greedy Equivalence Search (GES)~\cite{Meek_1997, Chickering_2002}, a greedy algorithm that searches through the space of DAGs by maximizing a scoring criterion such as the Bayesian Information Criterion (BIC). Under the faithfulness assumption GES is known to be consistent, i.e.~it learns the correct essential graph with probability approaching 1 as the sample size goes to infinity~\cite{Meek_1997, Chickering_2002}.  To reduce computation time, Teyssier and Koller~\cite{Koller_2005} suggested to replace the greedy search in DAG space by a greedy search in the space of all orderings; a scoring criterion such as BIC is optimized by performing a walk on the edges of the permutohedron. Although no consistency guarantees were given for this greedy algorithm, simulations suggest that the greedy ordering-based search has a similar performance and lower computational costs as compared to GES~\cite{Koller_2005}. In the following, we use our geometric insight on DAG associahedra to develop a new greedy ordering-based search with consistency guarantees.

Let $\cF$ be a coarsening of the $S_n$ fan. Each cone in $\cF$ is defined by inequalities of the form $x_i \leq x_j$ and can be labeled a poset on $[n]$.  
Then we get a map from permutations of $[n]$ to the set of partial orders on $[n]$, derived from the map sending a maximal $S_n$ cone to the maximal cone $\cF$ containing it.  The preimage permutation (total order) is a linear extension of its image partial order. Hence the maximal cones of the coarsened $S_n$ fan --- or the vertices of the generalized permutohedron if the fan is polytopal --- can be labeled by posets so that every permutation is a linear extension of exactly one of the posets.  If two permutations $\pi$ and $\tau$ are mapped to the same partial order, then we denote this by $\pi \sim \tau$.

A semigraphoid $\mathcal{C}$ on $[n]$ also gives a map from $S_n$ to the set of DAGs on nodes~$[n]$ as described in~\cite{SP_alg}:  To every permutation $\pi$ we associate a DAG $\mathcal{G}_\pi$ with 
\begin{equation}
\label{eqn:vertexDAG}
(\pi_i,\pi_j) \in \mathcal{G}_\pi \iff i < j \text{ and }  \pi_i \notindependent \pi_j \mid \{\pi_1, \dots , \pi_{\max(i,j)}\}\setminus \{\pi_i,\pi_{j}\}.
\end{equation}  
In other words, the edge directions in the graph must be compatible with the ordering $\pi = (\pi_1|\pi_2|\cdots|\pi_n)$, and the existence of an edge means that the two nodes are {\em not} independent given all the nodes that come before them in the ordering. $G_\pi$ is also known as  a minimal I-map or a directed independence graph.

We call $\pi$ a topological ordering of $\mathcal{G}$ if any edge $(i,j)$ in $\mathcal{G}$ implies that $i \succ j$ in $\pi$.
Note that if the semigraphoid comes from a DAG $\mathcal{G}$ and $\pi$ is a topological ordering of $\mathcal{G}$, then $\mathcal{G} = \mathcal{G}_\pi$.

\begin{algorithm}[!t]
\caption{Greedy SP algorithm on the permutohedron}
\label{alg:greedy_perm}
\begin{algorithmic}
\begin{STATE}

{\bf Input:} A set of CI relations $\mathcal{C}$ on $n$ random variables and a starting permutation $\pi\in S_n$
\vspace{0.2cm}

{\bf Output:} 
An essential graph $G$.

\begin{enumerate}

\item Set $t:=0$ and $\pi^{(0)} := \pi$. 

\item Set $t:=t+1$. Randomly select a permutation $\pi^{(t)}$ that differs from $\pi^{(t-1)}$ in a single adjacent transposition such that $\mathcal{G}_{\pi^{(t)}}$ is at least as sparse as $\mathcal{G}_{\pi^{(t-1)}}$.

\item Iterate (2) until convergence to the sparsest Markov equivalence class and output the corresponding essential graph.

\end{enumerate}

\end{STATE}
\end{algorithmic}

\end{algorithm}

In~\cite{SP_alg}, it was proposed to use the number of edges of $\mathcal{G}_\pi$ as a scoring criterion.  It was shown that an algorithm that outputs the Markov equivalence class of $\mathcal{G}_\pi$ with the fewest number of edges is consistent, i.e.~it outputs the correct Markov equivalence class, under strictly weaker conditions than faithfulness.  A permutation $\pi$ giving a sparsest DAG is called a {\em sparsest permutation}.  However the sparsest permutation (SP) algorithm is problematic from a computational point of view since it requires searching over all permutations. Instead, similarly as suggested in ~\cite{Koller_2005}, we can perform a greedy search by traversing the edges of the permutohedron, using the number of edges of $\mathcal{G}_\pi$ as a scoring function (see Algorithm~\ref{alg:greedy_perm}).


Algorithm~\ref{alg:greedy_perm} requires searching through neighboring permutations even when they give rise to the same DAG. For example, the neighboring permutations $\pi=(1|2|3|4)$ and $\tau=(2|1|3|4)$ in Example~\ref{ex:4nodes} give rise to the same DAG $\mathcal{G}_\pi = \mathcal{G}_\tau=\mathcal{G}$ shown in Figure~\ref{graphs_ex_1} (left). We next discuss how to reduce the search space and hence computation time by performing the greedy search on the smaller DAG associahedron instead of the full permutohedron. 
The difficulty is that this needs to be done without having access to the DAG $\mathcal{G}$ on which the DAG associahedron is based. In order to do this, we give a description of the vertices and edges of a DAG associahedron in terms of the DAGs $\mathcal{G}_\pi$ that are associated to its vertices.

\begin{thm}
\label{vertex_thm}
For any fixed graphoid and two permutations $\pi$ and $\tau$, we have 
\[\pi \sim \tau \iff \mathcal{G}_\pi = \mathcal{G}_\tau.\]  Moreover, the equivalence class of $\pi$ consists of all topological orderings of $\mathcal{G}_\pi$.
\end{thm}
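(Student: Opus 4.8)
The plan is to reduce the relation $\pi \sim \tau$ to single adjacent transpositions and then to identify the fiber of the map $\pi \mapsto \mathcal{G}_\pi$ with a set of topological orderings. Recall that $\pi \sim \tau$ holds precisely when $\pi$ and $\tau$ are joined by a sequence of adjacent transpositions, each crossing a wall that is missing from the coarsened $S_n$ fan, and that the wall crossed by swapping the entries in positions $k,k+1$ corresponds to the relation $a \independent b \mid L$, where $a,b$ are the swapped elements and $L = \{\pi_1,\dots,\pi_{k-1}\}$. Since equality of DAGs is transitive, to prove $\pi \sim \tau \Rightarrow \mathcal{G}_\pi = \mathcal{G}_\tau$ it suffices to treat one such transposition across a missing wall.

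So first I would fix a swap of $a = \pi_k$ and $b = \pi_{k+1}$ subject to $a \independent b \mid L$ and compare the defining conditions~(\ref{eqn:vertexDAG}) for $\mathcal{G}_\pi$ and $\mathcal{G}_\tau$. For every edge not joining $\{a,b\}$ to an earlier node, and different from the $a$--$b$ edge itself, the conditioning set is unchanged as a set and the relative order is preserved, so its presence and direction are unaffected. The $a$--$b$ edge is absent in both graphs because $a \independent b \mid L$. The only remaining edges are those joining $a$ or $b$ to a node $x = \pi_i$ with $i<k$; writing $L' = L\setminus\{x\}$, one must show $x \independent a \mid L' \iff x \independent a \mid L'b$ and, symmetrically, $x \independent b \mid L'a \iff x \independent b \mid L'$. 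This is exactly where the hypothesis that we have a graphoid, rather than a mere semigraphoid, is essential: applying (SG2) to $a \independent b \mid L'x$ gives the forward direction, while the intersection axiom (INT) gives the backward direction. I expect this verification — in particular pinning down that (INT) is what licenses inserting or deleting $b$ from the conditioning set — to be the main obstacle and the crux of the whole argument. Once it is done, all edges agree in presence and direction, so $\mathcal{G}_\pi = \mathcal{G}_\tau$.

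For the converse and the ``moreover'' clause I would first read off directly from~(\ref{eqn:vertexDAG}) that every edge of $\mathcal{G}_\pi$ runs from an earlier to a later node of $\pi$, so $\pi$ is a topological ordering of $\mathcal{G}_\pi$. It then remains to show that any topological ordering $\tau$ of $\mathcal{G} := \mathcal{G}_\pi$ satisfies $\pi \sim \tau$. Both $\pi$ and $\tau$ are linear extensions of the transitive closure of $\mathcal{G}$, and any two linear extensions of a finite poset are connected through linear extensions by adjacent transpositions of incomparable elements. When such a swap exchanges adjacent elements $a = \tau_k$ and $b = \tau_{k+1}$, incomparability forces no edge between them in $\mathcal{G}$, which by the definition of $\mathcal{G}_\tau = \mathcal{G}$ means $a \independent b \mid \{\tau_1,\dots,\tau_{k-1}\}$; hence the crossed wall is missing and $\tau$ is $\sim$-equivalent to the swapped ordering, which by the first part has the same associated DAG. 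Inducting along the connecting sequence keeps $\mathcal{G}$ fixed at every step and yields $\pi \sim \tau$.

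Finally I would assemble the pieces. The first part gives $\pi \sim \tau \Rightarrow \mathcal{G}_\pi = \mathcal{G}_\tau$ and, together with the observation that every permutation is a topological ordering of its own DAG, shows that every member of the $\sim$-class of $\pi$ is a topological ordering of $\mathcal{G}_\pi$. The third paragraph supplies the reverse inclusion, and combined with $\mathcal{G}_\pi = \mathcal{G}_\tau \Rightarrow \tau$ is a topological ordering of $\mathcal{G}_\pi$, it yields $\mathcal{G}_\pi = \mathcal{G}_\tau \Rightarrow \pi \sim \tau$. Together these establish both the equivalence $\pi \sim \tau \iff \mathcal{G}_\pi = \mathcal{G}_\tau$ and the fact that the $\sim$-class of $\pi$ is exactly the set of topological orderings of $\mathcal{G}_\pi$.
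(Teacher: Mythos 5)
Your proposal is correct and takes essentially the same approach as the paper: the forward direction reduces to a single adjacent transposition and applies exactly the same (SG2)/(INT) pair to the edges joining the swapped elements to earlier nodes, and your converse --- connecting $\pi$ to $\tau$ through linear extensions while the forward implication keeps the associated DAG fixed at every step --- is the paper's induction on the number of inversions in different packaging. One small wording caution: anchor that induction at $\pi$ (where $\mathcal{G}_\pi = \mathcal{G}$ holds by definition) rather than writing ``by the definition of $\mathcal{G}_\tau = \mathcal{G}$'', since $\mathcal{G}_\tau = \mathcal{G}$ is part of what is being proved and only becomes available step by step via the forward direction.
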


\begin{proof}
Suppose $\pi \sim \tau$. We may assume that 
\[
\pi = (a_1|\cdots|a_k|i|j|b_1|\dots|b_{n-k-2})\quad {\rm and}\quad  \tau= (a_1|\cdots|a_k|j|i|b_1|\dots|b_{n-k-2}),
\]
 where $i \independent j \mid \{a_1,\ldots,a_k\}$, since any pair of equivalent permutations is connected by a sequence of such pairs.  Now let us compare the edges in $\mathcal{G}_\pi$ and $\mathcal{G}_\tau$.  There is no edge between $i$ and $j$ in either DAG.  Between any two nodes in $[n]\backslash \{i,j\}$, it is clear that $\mathcal{G}_\pi$ and $\mathcal{G}_\tau$ coincide. 

Now suppose that $(a_\ell,j)$ is not an edge in $\mathcal{G}_\pi$ for some $\ell$.   Let $K = \{a_1,\dots,a_k\}\backslash\{a_\ell\}$. Then by applying the intersection property (INT) of graphoids from \S2 
we obtain
\begin{align*}
i \independent j \mid Ka_\ell \;\text{ and }\; j \independent a_\ell \mid Ki  
&  \stackrel{\text{(INT)}}{\implies}
j \independent a_\ell \mid K.  
\end{align*}
 Thus $(a_\ell,j)$ is not an edge in $\mathcal{G}_\tau$ either. Similarly if $(a_\ell,j)$ is not an edge in $\mathcal{G}_\tau$, then applying the semigraphoid property (SG2) we obtain
\begin{align*}
j \independent a_\ell \mid K \text{ and } i \independent j \mid K\cup\{a_\ell\}  & \stackrel{\text{(SG2)}}\implies
 j \independent a_\ell \mid Ki.
\end{align*}
  Thus $(a_\ell,j)$ is not an edge in $\mathcal{G}_\pi$ either.  

We can check in a similar fashion by setting $K = \{a_1,\dots,a_k\}$ that for any $b_\ell \in [n] \backslash(\{a_1,\dots,a_k\}\cup\{i,j\})$ the edge $(j,b_\ell)$ is in $\mathcal{G}_\pi$ if and only if it is in $\mathcal{G}_\tau$. The same claims also hold for $i$ by switching $\pi$ and $\tau$.

For the converse, suppose $\tau$ is a topological ordering of $\mathcal{G}_\pi$.  In particular, this holds when $\mathcal{G}_\pi = \mathcal{G}_\tau$.    We wish to prove that $\pi \sim \tau$.  Without loss of generality we may assume that $\tau = (1|2|\cdots|n)$.  Let $\pi = (\pi_1|\pi_2|\cdots|\pi_n)$.  If $\pi \neq \tau$, then there is an $i \in [n-1]$ such that $\pi_i > \pi_{i+1}$.  Since $\pi_i$ and $\pi_{i+1}$ appear with opposite orders in $\pi$ and $\tau$ and $\tau$ is a topological ordering of $\mathcal{G}_\pi$, there is no edge between $\pi_i$ and $\pi_{i+1}$ in $\mathcal{G}_\pi$.  By construction of $\mathcal{G}_\pi$, we must have $\pi_i \independent \pi_{i+1} \mid \{\pi_1,\dots,\pi_{i-1}\}$ in the graphoid.  Let $\pi' = (\pi_1|\cdots|\pi_{i-1}|\pi_{i+1}|\pi_{i}|\pi_{i+2}|\cdots|\pi_n)$.  Then $\pi' \sim \pi$ by definition, so $\mathcal{G}_{\pi'} = \mathcal{G}_{\pi}$ as shown above.  Since $\tau$ is also a topological ordering of $\mathcal{G}_{\pi'}$, the statement $\tau \sim \pi$ follows by induction on the number of inversions in $\pi$.
\end{proof}


In the following example we illustrate Theorem~\ref{vertex_thm} and show how the vertices of a DAG associahedron can be labeled by posets or by DAGs.

\begin{example}
\label{ex:4nodesB}
We return to Example~\ref{ex:4nodes}. Compared to the permutohedron, the DAG associahedron corresponding to $\mathcal{G}$ has six new vertices, namely:
\begin{enumerate}
\item[(a)] $(1|2|3|4), (2|1|3|4)$,
\item[(b)] $(1|2|4|3), (2|1|4|3)$,
\item[(c)] $(1|3|2|4), (1|3|4|2)$,
\item[(d)] $(2|3|1|4), (2|3|4|1)$,
\item[(e)] $(3|4|1|2), (3|1|4|2), (3|1|2|4)$,
\item[(f)] $(3|4|2|1), (3|2|4|1), (3|2|1|4)$.
\end{enumerate}
The posets representing these vertices and the corresponding DAGs are shown in Figure~\ref{posets_1}. Each of the other vertices of the DAG associahedron corresponds to a single permutation and the corresponding DAG has no missing edges. \qed

\begin{figure}[]
\centering
\begin{minipage}{0.15\textwidth}{\includegraphics[scale=0.42]{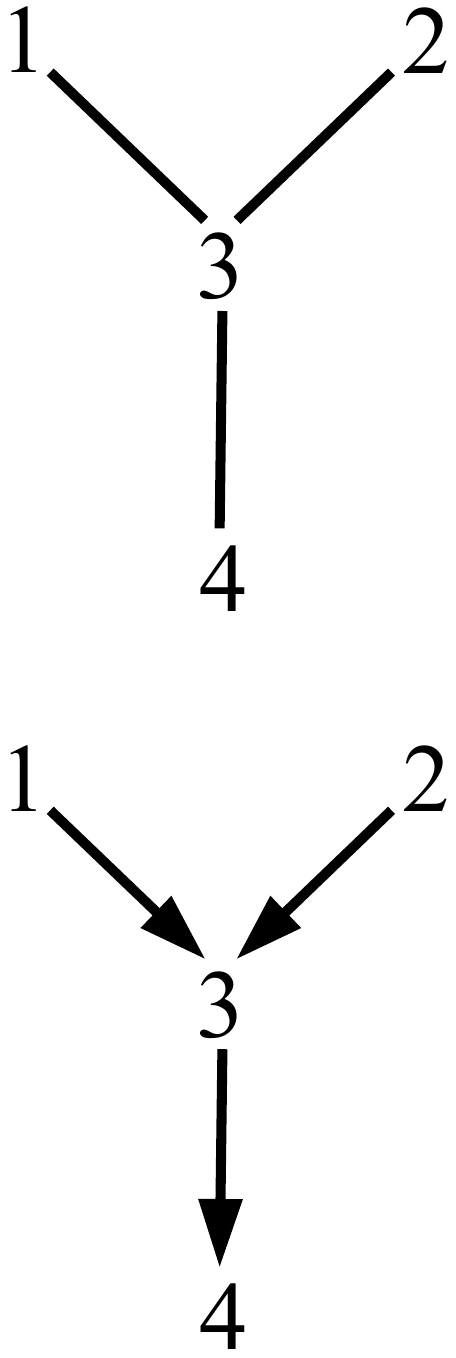}}\end{minipage}
\begin{minipage}{0.15\textwidth}{\includegraphics[scale=0.42]{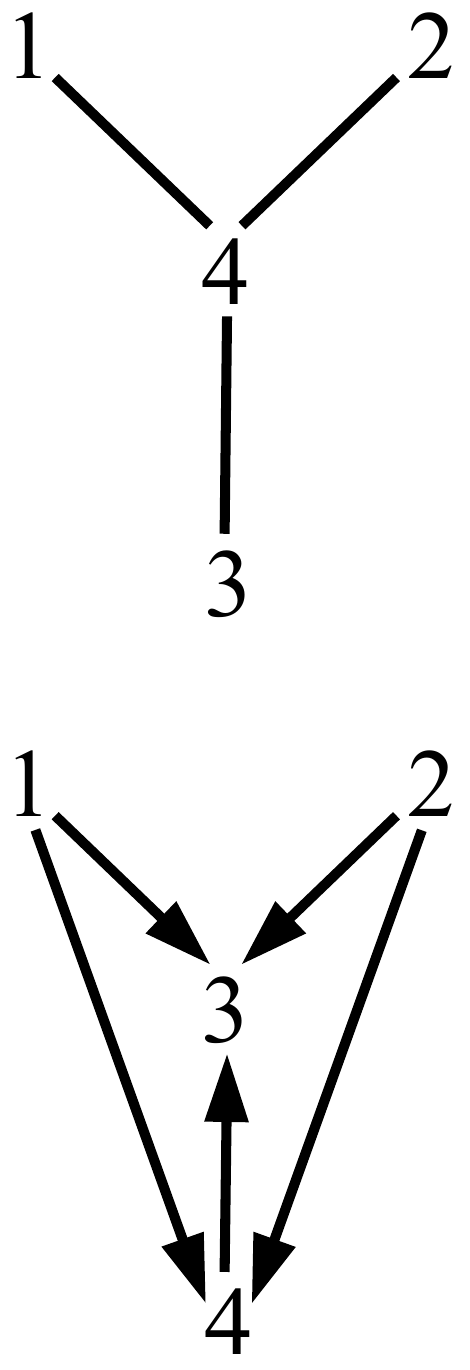}}\end{minipage}
\begin{minipage}{0.15\textwidth}{\includegraphics[scale=0.42]{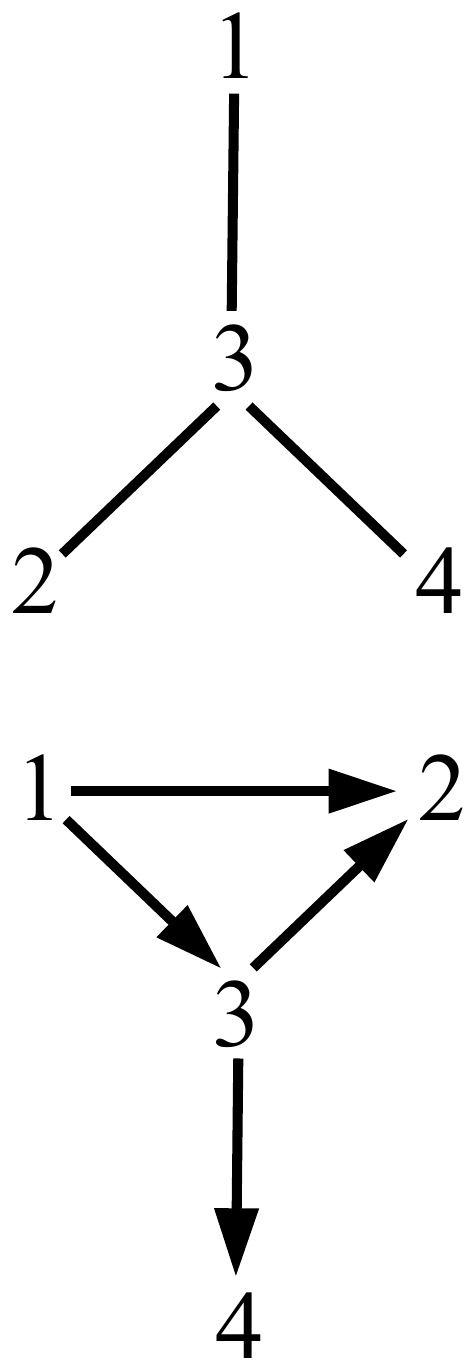}}\end{minipage}
\begin{minipage}{0.15\textwidth}{\includegraphics[scale=0.42]{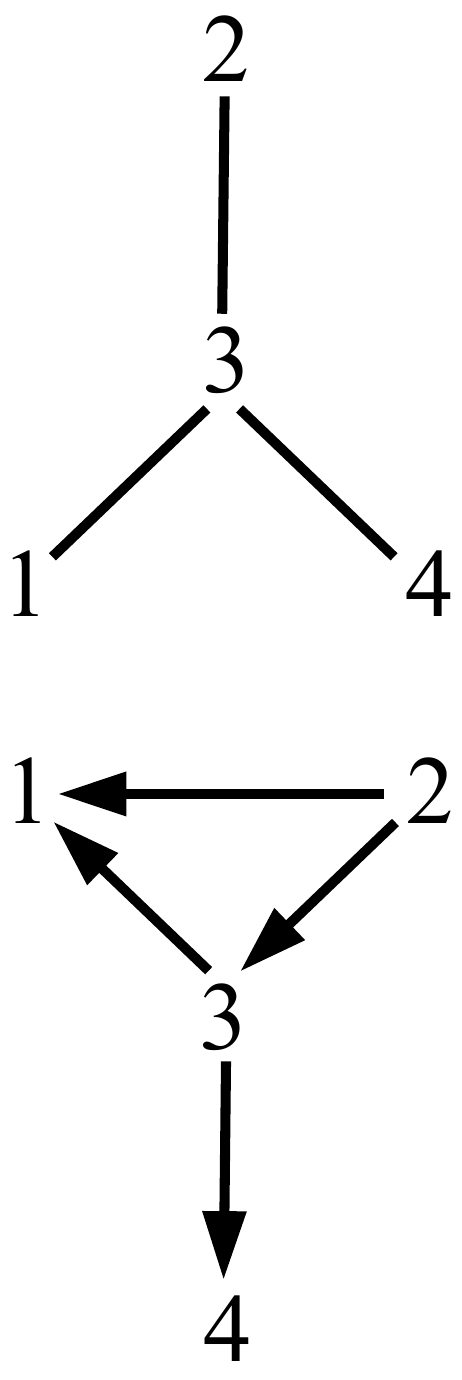}}\end{minipage}
\begin{minipage}{0.15\textwidth}{\includegraphics[scale=0.42]{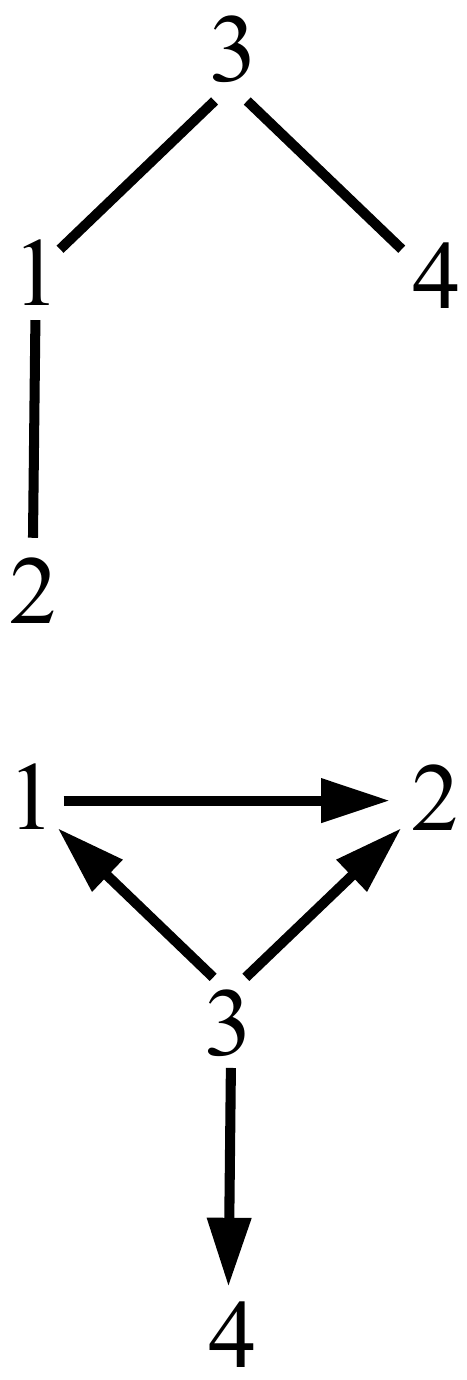}}\end{minipage}
\begin{minipage}{0.15\textwidth}{\includegraphics[scale=0.42]{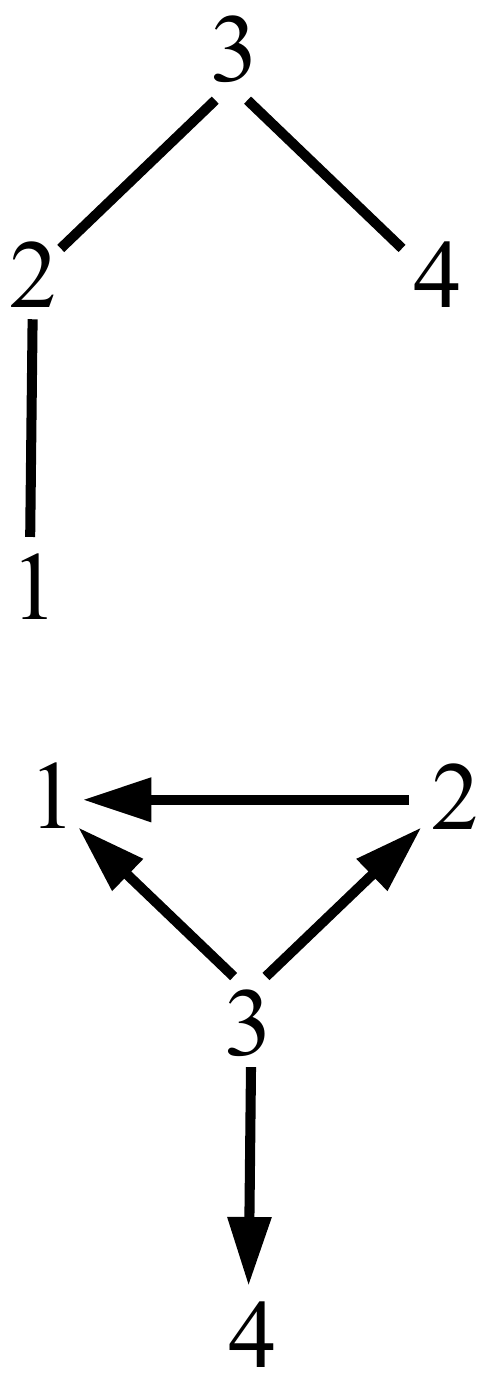}}\end{minipage}
\caption{\label{posets_1}Posets and their corresponding DAGs representing the new (compared to the permutohedron) vertices of the DAG associahedron discussed in Examples~\ref{ex:4nodes} and~\ref{ex:4nodesB}.}
\end{figure}


\end{example}

If we have a description of the vertices of a DAG associahedron in terms of posets, then we know the maximal cones in the normal fan, so we can directly obtain all other normal cones by intersecting the maximal cones. In the following, we give an alternative description of the edges of the DAG associahedron in terms of the DAGs $\mathcal{G}_\pi, \mathcal{G}_\tau$ corresponding to the vertices adjacent to an edge $(\pi, \tau)$.

Chickering~\cite{Chickering_95} introduced the notion of a covered edge: a directed edge $(i,j)$ in $\mathcal{G}$ is \emph{covered} if $$\textrm{pa}(i) = \textrm{pa}(j)\setminus\{i\}.$$
We denote by $\overline{\mathcal{G}}$ the skeleton of a DAG $\mathcal{G}$. In addition, for two undirected graphs $G$ and $G'$ we say that $G$ is a subset of $G'$, i.e., $G\subseteq G'$, if $G$ and $G'$ have the same node set and every edge in $G$ is also an edge in $G'$.  

The following result shows that given a DAG label of a vertex of a DAG associahedron, we can find neighboring vertices whose underlying graph is not bigger by flipping the direction of a covered edge.  We will prove this result more generally for gaussoids. 

\begin{thm}
\label{edge_thm}
Let $\cF$ be a coarsened $S_n$ fan corresponding to a gaussoid.  Suppose the equivalence classes of $\pi = (\pi_1|\pi_2|\cdots|\pi_n)$ and  $\tau = (\pi_1|\pi_2|\cdots|\pi_{i+1}|\pi_i |\cdots|\pi_n)$ are adjacent maximal cones in $\cF$.  Then  $\overline{\mathcal{G}}_{\tau}\subseteq \overline{\mathcal{G}}_{\pi}$ if and only if $(\pi_i,\pi_{i+1})$ is a covered edge in $\mathcal{G}_{\pi}$.
\end{thm}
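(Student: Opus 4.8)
The plan is to strip the statement down to a comparison of a handful of elementary conditional independence relations and then run the gaussoid axioms. Write $a = \pi_i$, $b = \pi_{i+1}$, and $K = \{\pi_1,\dots,\pi_{i-1}\}$. Since the equivalence classes of $\pi$ and $\tau$ are \emph{distinct} adjacent maximal cones, the wall separating them is present in $\cF$; that wall corresponds to the relation $a \independent b \mid K$, so this relation is \emph{not} in the gaussoid, i.e.\ $a \notindependent b \mid K$. Consequently both $\mathcal{G}_\pi$ and $\mathcal{G}_\tau$ contain the edge joining $a$ and $b$, oriented $a \to b$ in $\mathcal{G}_\pi$ and $b \to a$ in $\mathcal{G}_\tau$; in particular this edge lies in both skeletons and plays no role in the containment $\overline{\mathcal{G}}_\tau \subseteq \overline{\mathcal{G}}_\pi$.

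First I would show that $\mathcal{G}_\pi$ and $\mathcal{G}_\tau$ agree on \emph{every} edge except possibly those joining $a$ or $b$ to an element $w \in K$. Indeed, for any pair of nodes not of this form, swapping the two consecutive entries $\pi_i,\pi_{i+1}$ leaves the conditioning set in~\eqref{eqn:vertexDAG} unchanged; this is a routine check over the position cases. Fixing $w \in K$ and setting $L = K\setminus\{w\}$, the theorem then localizes to the four elementary relations $A_1 = (a \independent w \mid L)$, $A_2 = (a \independent w \mid Lb)$, $B_1 = (b \independent w \mid La)$, and $B_2 = (b \independent w \mid L)$. Reading off parents from~\eqref{eqn:vertexDAG} gives $\textrm{pa}(a) = \{w \in K : \neg A_1\}$ and $\textrm{pa}(b)\setminus\{a\} = \{w \in K : \neg B_1\}$ in $\mathcal{G}_\pi$, so the edge $a\to b$ is covered exactly when $A_1 \Leftrightarrow B_1$ for every $w$. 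Likewise, the edge $(a,w)$ lies in $\mathcal{G}_\tau$ iff $\neg A_2$ and in $\mathcal{G}_\pi$ iff $\neg A_1$, while $(b,w)$ lies in $\mathcal{G}_\tau$ iff $\neg B_2$ and in $\mathcal{G}_\pi$ iff $\neg B_1$; hence $\overline{\mathcal{G}}_\tau \subseteq \overline{\mathcal{G}}_\pi$ iff $A_1 \Rightarrow A_2$ and $B_1 \Rightarrow B_2$ for every $w$. It then suffices to prove, for each fixed $w$, that $(A_1 \Leftrightarrow B_1)$ is equivalent to $(A_1 \Rightarrow A_2)\wedge(B_1 \Rightarrow B_2)$.

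For the covered-implies-containment direction I would use only the semigraphoid axiom (SG2): assuming $A_1 \Leftrightarrow B_1$, if $A_1$ (hence $B_1$) holds, then applying (SG2) to $w \independent a \mid L$ and $w \independent b \mid aL$ produces simultaneously $B_2$ and $A_2$, yielding both implications. The reverse direction is where weak transitivity and the global dependence enter. Assuming containment together with $A_1$, the consequence $A_2$ feeds into (G2), which forces $a \independent b \mid L$ or $B_2$; but $a \independent b \mid L$ combined with $A_1$ gives, via (G1), the relation $a \independent b \mid wL$, i.e.\ $a \independent b \mid K$, contradicting $a \notindependent b \mid K$. Hence $B_2$ holds, and (G1) applied to $A_1$ and $B_2$ delivers $B_1$; the symmetric argument, starting from $B_1$ and using $B_1 \Rightarrow B_2$, yields $A_1$. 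Thus $A_1 \Leftrightarrow B_1$.

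\textbf{Main obstacle.} The difficulty is concentrated in this second direction: the equivalence genuinely requires (G1) and (G2), not merely the semigraphoid structure, and the only mechanism for discarding the spurious alternative $a \independent b \mid L$ is to invoke the global fact $a \notindependent b \mid K$ arising from the adjacency of the two cones. This is exactly why the statement is phrased for gaussoids rather than arbitrary semigraphoids. The other delicate point is the reduction in the second paragraph—verifying that no unexpected edge between $\{a,b\}$ and $K$ survives in $\mathcal{G}_\tau$ but not $\mathcal{G}_\pi$—which must be carried out carefully over all cases of node positions relative to $i$ and $i+1$.
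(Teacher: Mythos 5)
Your proposal is correct and follows essentially the same route as the paper's proof: both localize the comparison to edges between the prefix $\{\pi_1,\dots,\pi_{i-1}\}$ and the swapped pair, prove the covered-implies-containment direction with (SG2), and prove the converse using (G1), (G2), and the dependence $\pi_i \notindependent \pi_{i+1} \mid \{\pi_1,\dots,\pi_{i-1}\}$ forced by the adjacency of the two cones. The only difference is organizational: you derive the per-element biconditional $A_1 \Leftrightarrow B_1$ directly, while the paper argues by contradiction in two asymmetric cases (one via (SG2) and (G2), the other via (G1) and (G2)), but the ingredients and their roles are identical.
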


\begin{proof}
First, note that $(\pi_i, \pi_{i+1})$ is an edge in $\mathcal{G}_\pi$, since otherwise $\mathcal{G}_\pi = \mathcal{G}_{\tau}$ by Theorem~\ref{vertex_thm}. We now prove the ``if'' direction. Without loss of generality we assume that $\pi=(1|2|\cdots|n)$, $\tau=(1|2|\cdots|i-1|i+1|i|i+2|\cdots|n)$ and $(i,i+1)$ is a covered edge in $\mathcal{G}_\pi$. Note that from the definition of $\mathcal{G}_\pi$ and $\mathcal{G}_\tau$ the only difference between these two DAGs can be in the presence or absence of edge $(\ell,i)$ or $(\ell,i+1)$ with $\ell<i$. In order to prove that $\overline{\mathcal{G}_{\tau}}\subseteq\overline{\mathcal{G_{\pi}}}$, we need to show that any missing edge $(\ell,i)$ or $(\ell,i+1)$ in $\mathcal{G}_{\pi}$ is also not present in $\mathcal{G}_\tau$. Now suppose that $(\ell,i)$ is a missing edge in $\mathcal{G}_{\pi}$ for some  $\ell<i$. Since the edge $(i,i+1)$ is covered in $\mathcal{G}_\pi$, then $(\ell,i+1)$ is also a missing edge in $\mathcal{G}_{\pi}$. Let $K=\{1,\ldots,i-1\}\backslash\{\ell\}$. By the definition of $\mathcal{G}_\pi$ and $\mathcal{G}_\tau$ we get that
\[
\ell\independent i\mid K\quad\text{and}\quad \ell\independent i+1\mid Ki,
\]
and hence by the semigraphoid property (SG2) we obtain that $\ell\independent i+1 \mid K$ and $\ell\independent i \mid K\cup\{i+1\}$. Therefore, $(\ell,i)$ and $(\ell,i+1)$ are also missing edges in $\mathcal{G}_{\tau}$, and we conclude that $\overline{\mathcal{G}_{\tau}}\subseteq\overline{\mathcal{G_{\pi}}}$.


For the ``only if'' direction suppose that $\overline{\mathcal{G}_\tau} \subseteq \overline{\mathcal{G}_\pi}$.  We want to show that the edge $(\pi_{i},\pi_{i+1})$ is a covered edge in $\mathcal{G}_{\pi}$.  Assume on the contrary that it is not. 

We first consider the case when there is an $a < i$ with $(a,i+1) \in \mathcal{G}_\pi$ but $(a,i) \notin \mathcal{G}_\pi$.  Then $(a,i) \notin \mathcal{G}_\tau$, and hence
\begin{equation}
\label{eqn:A}
a \ind i \mid K \cup \{i+1\},
\end{equation}
where $K=\{1,\dots,i-1\}\setminus\{a\}$.  We claim that $(a,i+1) \in \mathcal{G}_\tau$.  Otherwise we would have $a \independent i+1 \mid K$, which together with (\ref{eqn:A}) implies 
$a \ind i+1 \mid Ki$ by (SG2), contradicting $(a,i+1) \in \mathcal{G}_\pi$.
From $(a,i+1) \in \mathcal{G}_\tau$, we have 
\begin{equation}
\label{eqn:B}
a \notindependent i+1 \mid K.
\end{equation}
Next we claim that 
\begin{equation}
\label{eqn:C}
i \notindependent i+1 \mid K.
\end{equation}
Otherwise, together with (\ref{eqn:A}) we would have $i \ind i+1 \mid Ka$ by (SG2), contradicting the assumption that $\pi$ and $\tau$ lie in adjacent cones of the fan $\cF$. 
Finally, from the weak-transitivity axiom (G2) for gaussoids we obtain
\begin{equation}
\label{eqn:E}
i \notindependent i+1 \mid K\quad\text{and}\quad a \notindependent i+1 \mid K \stackrel{\text{(G2)}}{\implies} a \notindependent i \mid K \quad\text{or}\quad a \notindependent i \mid K \cup \{i+1\}
\end{equation}
Combining~(\ref{eqn:A}) 
and (\ref{eqn:E}), we obtain 
$
a \notindependent i \mid K,
$
that is, $(a,i) \in \mathcal{G}_\pi$, contradicting the assumption that $(a,i) \notin \mathcal{G}_\pi$.

Now we consider the case where $(a,i) \in \mathcal{G_\pi}$, but $(a,i+1) \notin \mathcal{G_\pi}$, so $(a, i+1) \notin \mathcal{G_\tau}$.  Then
\begin{equation}
\label{eqn:F}
a \ind i+1 \mid Ki\quad \text{and}\quad a \ind i+1 \mid K.
\end{equation}
By the gaussoid axiom (G1), we have
\begin{equation}
\label{eqn:G}
i+1 \notindependent a \mid Ki\quad\text{or}\quad i+1 \notindependent i \mid Ka \;\Rightarrow\; a \notindependent i+1 \mid K\quad \text{or}\quad i \notindependent i+1 \mid K.
\end{equation}
Since $\pi$ and $\tau$ are in adjacent cones of the fan $\cF$, we have $i+1 \notindependent i \mid Ka$, so by (\ref{eqn:F}) and (\ref{eqn:G}),
\begin{equation}
\label{eqn:H}
i \notindependent i+1 \mid K.
\end{equation}
Since by assumption $(a,i) \in \mathcal{G_\pi}$, then $a \notindependent i \mid K$.  This together with~(\ref{eqn:H}) and weak transitivity (G2) gives us 
$a \notindependent i+1 \mid Ki$ or $a \notindependent i+1 \mid K$,
which contradicts (\ref{eqn:F}).
\end{proof}

This result directly gives rise to an improved version of Algorithm~\ref{alg:greedy_perm}, which corresponds to performing a greedy search on the DAG associahedron instead of the permutohedron and does not require knowing the underlying true DAG (see Algorithm~\ref{alg:greedy_ass}). In this algorithm, we are given a set of CI relations $\mathcal{C}$ that are induced from a fixed but unknown DAG $\mathcal{G}$. In each iteration the algorithm outputs an auxiliary DAG, whose skeleton contains the skeleton of $\mathcal{G}$. 
In a statistical follow-up work~\cite{greedy_SP} we show the importance of the geometric results obtained in this paper for applications to causal inference. In particular, we prove that Algorithm~\ref{alg:greedy_ass} is consistent under the faithfulness condition, i.e., that it converges to $\mathcal{G}$ under the faithfulness assumption. We end by providing a sketch of the proof. Let $\mathcal{G}$ denote the true DAG. Then $\mathcal{G} = \mathcal{G}_\pi$ for some $\pi$ (any topological ordering of $\mathcal{G}$).  Let $\tau \in S_n$. Then every independence relation that holds for $\mathcal{G}_\tau$ also holds for $\mathcal{G}$~\cite[Lemma~2.1]{SP_alg}. This implies $\overline{\mathcal{G}}\subseteq\overline{\mathcal{G}_\tau}$. If a permutation $\pi$ differs from $\tau$ only in the reversal of a covered edge in $\mathcal{G}_\tau$, then by Theorem~\ref{edge_thm} we have $\overline{\mathcal{G}_\pi} \subseteq \overline{\mathcal{G}_\tau}$.  At a high level, the proof follows from a result by Chickering~\cite[Theorem 4]{Chickering_2002} which says that using such edge reversals one can go from any DAG $\mathcal{G_\tau}$ to any DAG $\mathcal{G}_\pi$ with $\overline{\mathcal{G}_\pi} \subseteq \overline{\mathcal{G}_\tau}$. The difficulty lies in showing that there exists such a Chickering sequence which corresponds to a walk on the DAG associahedron, which is proven in~\cite{greedy_SP}.

\begin{algorithm}[!t]
\caption{Greedy SP algorithm on the DAG associahedron}
\label{alg:greedy_ass}
\begin{algorithmic}
\begin{STATE}

{\bf Input:} A set of CI relations $\mathcal{C}$ on $n$ random variables and a starting permutation $\pi\in S_n$
\vspace{0.2cm}

{\bf Output:} 
An essential graph $G$.

\begin{enumerate}

\item Set $t=0$ and $\pi^{(0)} = \pi$. 

\item Set $t:=t+1$. Randomly select a covered edge $(\pi^{(t-1)}_i, \pi^{(t-1)}_j)$ in $\mathcal{G}_{\pi^{(t-1)}}$ and reverse its direction. Let $\pi^{(t)}$ denote the resulting permutation and $\mathcal{G}_{\pi^{(t)}}$ the corresponding DAG.


\item Iterate (2) until convergence to the sparsest Markov equivalence class and output the corresponding essential graph.

\end{enumerate}

\end{STATE}
\end{algorithmic}

\end{algorithm}

%


\appendix
\section{Polytopes and fans}
\label{sec:def}

Most of the following definitions can be found in \cite{Ziegler}. 
A \emph{polyhedron} is a subset of a real vector space $\R^n$ defined by finitely many linear inequalities. A \emph{polytope} is a bounded polyhedron.  Equivalently, a polytope is the convex hull of a finite set of points in $\mathbb{R}^n$. The \emph{Minkowski sum} of two polyhedra $P$ and $Q$ is defined as $P+Q=\{x+y\mid x\in P \text{ and } y\in Q\}$.  A (polyhedral) \emph{cone} is a polyhedron that is closed under addition and scaling by a nonnegative real number.  A \emph{face} of a polyhedron $P$ is a subset of $P$ that maximizes some linear functional. A face $F$ of a nonempty polyhedron $P$ is \emph{proper} if $F\neq P$. A \emph{facet} is an inclusion maximal proper face of $P$.

A \emph{fan} is a family $\mathcal{F}$ of nonempty polyhedral cones such that
\begin{enumerate}
\item every face of a cone in $\mathcal{F}$ is also a cone in $\mathcal{F}$;
\item the intersection of any two cones in $\mathcal{F}$ is a face of both.
\end{enumerate}
A fan in $\RR^n$ is {\em complete} if the union of its cones is equal to $\RR^n$.  A {\em wall} in a complete fan in $\RR^n$ is an $(n-1)$-dimensional cone in the fan.

For each face $F$ of $P$, the \emph{outer normal cone} $N_F$ is the set of all linear functionals that are maximized on $F$, i.e.
\[N_F = \{c\in(\mathbb{R}^n)^*\mid  F\subseteq \{x\in P\mid c \cdot x = \max_{y\in P} (c \cdot y)\}\}.\]
 The outer \emph{normal fan} of a polytope $P$ is the collection $\{N_F : F \text{ is a face of } P\}$, which is a complete fan in $(\RR^n)^*$.  We identify $(\R^n)^*$ and $\R^n$ using the usual dot product.   The \emph{inner normal} cones and fans are defined analogously by replacing ``max'' with ``min''. For two faces $F$ and $F'$ of $P$, we have $F\subseteq F'$ if and only if $N_F\supseteq N_{F'}$. In particular, the smallest cone in the normal fan of $P$ is a linear space, namely the orthogonal complement of $P$. The normal cones of facets are inclusion-minimal cones that strictly contain the smallest cone.  The maximal full-dimensional cones in the normal fan are normal cones of the vertices of $P$.


\section{A proof of Lemma~\ref{lem:submodular}}
\label{sec:submodular}


\newtheorem*{lem:submodular}{Lemma~\ref{lem:submodular}}
\begin{lem:submodular}

A polytope $P \subseteq \RR^n$ is a generalized permutohedron if and only if there exists a submodular function $\omega : 2^{[n]} \rightarrow \RR$ with $\omega(\varnothing) = 0$ such that 
\begin{equation} 
\tag{\ref{eqn:polytope}}
P = \{x \in \RR^n : \sum_{i \in I} x_i \leq \omega(I) \text{ for each nonempty } I \subseteq [n], \text{ and } 
\sum_{i\in[n]}x_i = \omega([n])\}.
\end{equation}
A wall in the $S_n$ fan corresponding to $i \independent j \mid K$ is missing in the normal fan of $P$ defined by $\omega$ as above if and only if $\omega(Ki) + \omega(Kj) = \omega(Kij)+ \omega(K)$.
In particular, a coarsened $S_n$ fan is polytopal if and only if the corresponding semigraphoid is submodular.  
\end{lem:submodular}

Before proving the lemma, we first recall a general construction of the normal fan of a polytope from a halfspace description. See also~\cite[Theorem~9.5.6]{dLRS}.

Let $P = \{x \in \R^n : Ax + b \geq 0\}$ be a polytope, where $A$ is a $k \times n$ matrix and $b\in \RR^k$ is a column vector. Let us assume that $P$ is nonempty but is not necessarily full-dimensional.  Also assume that all inequalities are tight but possibly redundant.  In particular, if $a_i = a_j$, then $b_i = b_j$ where $(a_i,b_i)$ and $(a_j,b_j)$ are rows of $[A|b]$.
Let $C^*$ be the cone in $\R^{n} \times \R$ generated by the rows of the concatenated matrix $[A | b]$ and the vector $(\mathbf{0},1)$.  The row $(a_i,b_i)$ of $[A|b]$ is called a {\em lift} of the vector $a_i \in \RR^n$.
Since $P$ is bounded, for any $z\in \RR^n$ we have $Az \geq \mathbf{0} \Rightarrow z = \mathbf{0}$; otherwise $P$ would be unbounded in direction $z$.  
Then the rows of $A$ cannot be all contained in a halfspace $\{x : x \cdot z \geq 0\}$ for any nonzero $z$, so the rows of $A$ positively span $\RR^n$, and the cone $C^*$ projects surjectively onto~$\RR^n$.

The dual cone of $C^*$ is 
\begin{align}
  \label{eqn:dualCone}
  \begin{split}
C :=& \{v \in \RR^{n+1} : u\, \cdot\, v \geq 0 \text{ for all } u \in C^*\} \\
=& \{v \in \RR^{n+1}: [A | b] v \geq \mathbf{0} \text{ and } v_{n+1} \geq 0\} \\
=& \cone \{(x,1) : x \in P\},
\end{split}
\end{align}
where $\cone\{\cdot\}$ denotes the conical hull.  
 All nonzero vectors in the cone $C$ have positive last coordinates and hence all proper faces of $C^*$ are on the {\em lower hull} of $C^*$, that is, they have inward pointing normal vectors with positive last coordinate.  In particular, if a vector $(a,b)$ lies on the boundary of $C^*$, then $(a,b+\varepsilon)$ does not lie on the boundary of $C^*$ for any~\mbox{$\varepsilon > 0$}.  Since each inequality $a_i x + b_i \geq 0$ is assumed to be tight, there is a point $x^i \in P$ satisfying $a_i x^i + b_i = 0$, so the vector $(a_i, b_i)$ belongs to a proper face of $C^*$ that minimizes the linear functional $u \mapsto (x^i, 1) \cdot u$. 

 We claim that the projections of proper faces of $C^*$ onto $\R^n$ form the inner normal fan of $P$.  Let $p$ be a point in $P$ and consider the inner normal cone
\[
N_p := \{ c \in \RR^n : c \cdot p \leq c \cdot q \text{ for all } q \in P \}.
\]
We will show that $N_p$ is the projection of the following face of $C^*$ that minimizes the dot product with $(p,1)$:
\[
  \face_{(p,1)}(C^*) :=  \{u \in C^*: u \cdot (p,1) \leq u' \cdot (p,1) \text{ for all } u' \in C^*\}
   =  \{u \in C^*: u \cdot (p,1) = 0\}.
\]
Let $c \in N_p$. and let $u = (c, -c \,\cdot\, p)$.  Then $u \cdot (p,1) = c \,\cdot\, p - c \,\cdot\, p = 0$ and $u \cdot (q,1) = c \,\cdot\, q + u_{n+1} \geq c \,\cdot\, p + u_{n+1} = u \cdot (p,1) = 0$  for any $q\in P$; so we have found a vector $u \in \face_{(p,1)}(C^*)$ whose projection is $c$.  For the other inclusion, let $u \in \face_{(p,1)}(C^*)$.  Then $(u_1,\dots,u_n) \cdot p + u_{n+1} = 0$, while $(u_1,\dots,u_n) \cdot q + u_{n+1} \geq 0$ for all $q\in P$, and hence $(u_1,\dots,u_n) \cdot p \leq (u_1,\dots,u_n) \cdot q$.  Thus the projection $(u_1,\dots,u_n)$ belongs to $N_p$.  This shows that normal cones of $P$ are precisely the projections of proper faces of $C^*$.

In summary, a complete fan $\cF$ in $\RR^n$ is the normal fan of a polytope if and only if there exists a cone $K \subset \RR^n \times \RR$ with $(\mathbf{0},1) \in K$ whose proper faces project precisely onto cones of $\cF$. Given such a cone $K$, the desired polytope is obtained by slicing the dual cone $C$ of $K$ with the hyperplane $x_{n+1} = 1$ and projecting out $x_{n+1}$.  Given a polytope $P$, the desired cone $K$ can be obtained two ways: either as $K = (\cone\{(x,1) : x \in P\})^*$, or from an inequality description $Ax+b\geq 0$ of $P$ by lifting the rows of $A$ to height $b$ and taking the conical hull of these lifted rows together with the vector $(\mathbf{0},1)$.  Tightness of an inequality means that the corresponding vector is lifted to the boundary of~$K$. 

For any collection of vectors $\{(a_1,b_1),\dots,(a_k,b_k)\}$ that spans $K$ as a conical hull, we can consider the polytope $\{x \in \RR^n : a_i \cdot x + b_i \geq 0 \text{ for all } i\}$.  The arguments above show that this polytope is equal to $P$.

\begin{proof}[Proof of Lemma~\ref{lem:submodular}]

  Let $\cF$ be a coarsened $S_n$ fan which is the normal fan of a polytope $P$.  Every cone in $\cF$ contains a line in direction $(1,1,\dots,1)$ and is generated by this line together with some $0/1$ vectors. Then $\cF$ consists of the projection of faces of a cone in $\RR^n \times \RR$ generated by lifts of the $0/1$ vectors and $\pm(1,\dots,1)$.  By the paragraph preceding the proof, $P$ must have a tight halfspace description with normal vectors from the set $V = \{e_I \mid \emptyset \neq I \subseteq [n]\} \cup \{-e_{[n]}\}$. 
The ``right-hand sides'' of the inequalities give a lift $\omega : V \rightarrow \RR$ such that the proper faces of the cone $C^* = \cone\{(v,\omega(v)) \mid v \in V\}$ project precisely onto the cones of $\cF$ and every lifted vector is on the boundary of $C^*$.  Since all cones in $\cF$ contain the line $(1,1,\dots,1)$, we must have $\omega(e_{[n]}) = - \omega(- e_{[n]})$. Such lifts can be identified with functions on $2^{[n]}$ with value $0$ on $\emptyset$.  We will show that $\omega$ is submodular.

For any $I,J \subseteq [n]$, the vectors $e_I, e_{I \cap J}, e_{I \cup J}$ lie in a common cone in the $S_n$ fan. Since $\cF$ coarsens the $S_n$ fan, they also lie in a common cone  in $\cF$.  Similarly $e_J, e_{I \cap J}, e_{I \cup J}$ lie in a common cone of~$\cF$. 
First, consider the case when $e_I$ and $e_J$ are lifted to the same proper face of $C^*$.  Then this cone also contains $e_{I\cap J}$ and $e_{I \cup J}$.  Since we assumed that all lifted vectors lie on the boundary, hence a proper face, of $C^*$, and $\omega$ is linear on this face, we must have that $\omega(e_I) + \omega(e_J) = \omega(e_{I\cap J}) + \omega(e_{I \cup J})$.  

Now suppose that $e_{I}$ and $e_J$ are not lifted to the same proper face of $C^*$. Then $\omega$ is not linear on the vectors $e_I, e_J, e_{I\cap J}$, and $e_{I\cup J}$. We must then have that $\omega(e_I) + \omega(e_J) > \omega(e_{I\cap J}) + \omega(e_{I \cup J})$, because $\omega(e_I) + \omega(e_J) < \omega(e_{I\cap J}) + \omega(e_{I\cup J})$ would imply that 
\[(e_{I\cap J} + e_{I \cup J}, \omega(e_{I\cap J})+ \omega(e_{I \cup J}))
> (e_I + e_J, \omega(e_I)+\omega(e_J)),
\] 
contradicting the fact that $e_{I\cap J}$ and $e_{I \cup J}$ are lifted to the same cone in the lower hull of $C^*$.

For the converse, suppose $\omega$ is a submodular function on $2^{[n]}$ with $\omega(\emptyset)=0$ and consider the lift of $e_I$ to $\omega(I)$ for each $I \subseteq [n]$ and $-e_{[n]}$ to $-\omega([n])$.  Let $\cF$ be the projection of the lower hull of the lifted cone $C^*$.  The submodularity inequality $\omega(e_I) + \omega(e_J) \geq \omega(e_{I\cap J}) + \omega(e_{I \cup J})$ ensures that whenever $e_I$ and $e_J$ are lifted to the same cone in the lower hull of $C^*$, then so are $e_{I\cap J}$ and $e_{I \cup J}$.  In other words, whenever a cone of $\cF$ contains both, $e_I$ and $e_J$, then it must also contain both, $e_{I\cap J}$ and $e_{I\cup J}$, showing that $\cF$ is a coarsening of the $S_n$ fan.

Now suppose that the coarsened $S_n$ fan $\cF$ is polytopal and defined by a submodular function $\omega$ as above.  
Consider a wall of the $S_n$ fan corresponding to the adjacent permutations
\[
(a_1|\cdots|a_k|i|j| b_1|\cdots|b_{n-k-2}) \text{ and } (a_1|\cdots|a_k|j|i| b_1|\cdots|b_{n-k-2}),
\]
where $\{a_1,\dots,a_k\} = K$ and $\{b_1,\dots,b_{n-k-2}\} = [n]\backslash (K\cup\{i,j\})$.   
This wall is not contained in a wall of $\cF$ if and only if the two adjacent maximal cones are contained in the same cone of $\cF$.  In particular, this happens if and only if $e_{Ki}$ and $e_{Kj}$ are in the same cone where $K = \{a_1,\dots,a_k\}$. This is equivalent to having
\[\omega(Ki) + \omega(Kj) = \omega(Kij)+ \omega(K).\]

Let $P$ be the polytope defined by~(\ref{eqn:polytope}).  Its inner normal fan is obtained by lifting the rays $-e_I$ to height $\omega(I)$ for nonempty $I \subseteq [n]$ and $e_{[n]}$ to height $-\omega([n])$.  This is the negation of the fan $\cF$, which is obtained by lifting $e_I$ to $\omega(I)$ and $-e_{[n]}$ to $-\omega([n])$.  This shows that $\cF$ is the outer normal fan of $P$.
\end{proof}

\section{Dictionary}
\label{sec:dictionary}

The statements or data in each row are equivalent. \\

\begin{center}
\begin{tabular}{| m{0.23\textwidth} | m{0.3\textwidth}| m{0.38\textwidth} |}
\hline
{\bf CI relations} & {\bf Fans} & {\bf Polytopes} \\
\hline
\hline
CI relation \mbox{$i \independent j \mid K$} where $i,j \in [n]$, $K \subseteq [n]\setminus\{i,j\}$ & the set of walls in the $S_n$ fan of the form $\sigma | i\,j| \tau$ where $\sigma$ and $\tau$ are permutations of $K$ and $[n]\backslash Kij$ respectively & the set of edges of a permutohedron connecting two permutations of the form $\sigma | i | j| \tau$ and $\sigma | j | i | \tau$ where $\sigma$ and $\tau$ are permutations of $K$ and $[n]\backslash Kij$, respectively  \\
\hline
a collection of CI relations that satisfy the semigraphoid axioms & removing the walls in the $S_n$ fan corresponding to the independence relations gives a fan & the set of edges of the permutohedron corresponding to the independence relations satisfies the square and hexagon axioms~\cite{Morton_et_al} \\
\hline

a semigraphoid that arises from a submodular function & a coarsening of $S_n$ fan that is {\em polytopal} or {\em regular} & there is a generalized permutohedron that realizes contraction of edges in the permutohedron corresponding to the CI relations
\\
\hline
a union of dependence relations of a semigraphoid & a common refinement of fans & a Minkowski sum of polytopes (if the semigraphoid is submodular) \\
\hline
\end{tabular}
\end{center}

\section*{Acknowledgment}
CU was partially supported by DARPA (W911NF-16-1-0551), NSF (DMS-1651995), ONR (N00014-17-1-2147), a Sloan Fellowship, and the Austrian Science Fund (Y 903-N35).  JY was partially supported by the US NSF grant DMS \#1600569. We are grateful to the anonymous referees for very helpful comments on earlier versions of this paper.

\bibliography{DAG}
\bibliographystyle{beta}
\end{document}